\numberwithin{equation}{section}
\definecolor{FrameColor}{rgb}{0.85,0.85,0.85}
\newtheorem{theorem}{Theorem}[section]
\newtheorem{lemma}[theorem]{Lemma}
\newtheorem{proposition}[theorem]{Proposition}
\newtheorem{remark}{Remark}[section]
\newtheorem{definition}{Definition}[section]
\definecolor{LinkColor}{rgb}{0,0,1}
\definecolor{LinkColor2}{rgb}{0,0.5,0}
\definecolor{lg}{rgb}{.5,.5,.5}
\newcommand{\suchthat}{\;\ifnum\currentgrouptype=16 \middle\fi|\;}
\renewenvironment{proof}[1][\proofname]{%
	\par\pushQED{\qed}\normalfont%
	\topsep6\p@\@plus6\p@\relax
	\trivlist\item[\hskip\labelsep\bfseries#1\@addpunct{.}]%
	\ignorespaces
}{%
	\popQED\endtrivlist\@endpefalse
}
\renewcommand\paragraph{\@startsection{paragraph}{4}{\z@}%
	{1ex \@plus1ex \@minus.2ex}%
	{-1em}%
	{\normalfont\normalsize\bfseries}}
\renewcommand\subparagraph{\@startsection{paragraph}{4}{\z@}%
	{1ex \@plus1ex \@minus.2ex}%
	{-1em}%
	{\normalfont\normalsize\itshape}}
\begin{document}
	
	\title{\sc Attractors for Singular-Degenerate Porous Medium Type Equations Arising in Models for Biofilm Growth}
	
	\author{
		{\sc Zehra Sen}\thanks{Email: \href{mailto:zarat@hacettepe.edu.tr}{zarat@hacettepe.edu.tr}}\\
		\small  Department of Mathematics, Faculty of Science,
		Hacettepe University, \\
		\small   Beytepe 06800, Ankara, Türkiye.
		\\[2ex]	
		{\sc Stefanie Sonner}\thanks{Email: \href{mailto:stefanie.sonner@ru.nl}{stefanie.sonner@ru.nl}}  \\
		\small IMAPP-Mathematics, PO Box 9010, Radboud University, \\
		\small  6500 GL, Nijmegen, The Netherlands.
		\smallskip}	
	
	\date{}
	\maketitle
\begin{abstract}
We investigate the long-time behaviour of solutions of a class of singular-degenerate porous medium type equations in bounded domains with homogeneous Dirichlet boundary conditions. The existence of global attractors is shown under very general assumptions. Assuming, in addition, that solutions are globally Hölder continuous and the reaction terms satisfy a suitable sign condition in the vicinity of the degeneracy, we also prove the existence of an exponential attractor, which, in turn, yields the finite fractal dimension of the global attractor. Moreover, we extend the results for scalar equations to systems where the degenerate equation is coupled to a semilinear reaction-diffusion equation. The study of such systems is motivated by models for biofilm growth.
\end{abstract}
\begin{small}
	\noindent{\bf Keywords:} Quasilinear degenerate reaction diffusion system,
Slow/fast diffusion,
Mixed boundary condition, Attractor, Fractal dimension, Biofilm\\
	\smallskip
	\noindent{{\bf MSC Classification:}  
		37L30, 35K65, 35K67, 35K10, 92D25

	}
\end{small}

\vfill

\begin{small}
	\setcounter{tocdepth}{2}
	\hypersetup{linkcolor=black}
	
\end{small}

\newpage
\setlength\parindent{0ex}
\setlength\parskip{1ex}

\section{Introduction}\label{sec1}

\subsection{Singular-degenerate porous medium type equations}
We investigate the long-time behaviour, in terms of global and exponential attractors, of a class of second-order quasilinear degenerate parabolic equations considered in \cite{MullerSonner}. The problems are of the form 
\begin{align}
	u_{t}=\Delta \phi(u)+f(\cdot,u)\ \ \text{in}\ \ (0,T]\times\Omega,  \label{eq:SE}
\end{align}
complemented with a suitable initial data and homogeneous Dirichlet boundary conditions, where $T>0$ and $\Omega\subset \mathbb{R}^{n}, n\geq 1$, is a bounded domain. The solution $u$ assumes its values in the interval $(-1,1)$, $f:\Omega\times (-1,1)\to \mathbb{R}$ is continuous and $\phi: (-1,1) \to \mathbb{R}$ is strictly increasing with a degeneracy $\phi'(0)=0$ and singularities $\phi(\pm 1)=\pm\infty$. Since the degeneracy in $u=0$ is of the same type as in the porous medium equation, $u_{t}=\Delta u^{m}, m>1$, 
we call \eqref{eq:SE} a singular-degenerate equation of porous medium type. Porous medium type equations are used to model various natural phenomena such as the flow of an ideal gas through a porous medium (\cite{Muskat}), nonlinear heat transfer 
(\cite{Zeldovich}) and the spreading of certain biological populations (\cite{GurtinMacCamy}). 

We will also consider the following system, where the equation \eqref{eq:SE} is coupled to a semilinear reaction-diffusion equation,
\begin{equation}
	\left\{ 
	\begin{array}{l}
		u_{t}=\Delta \phi(u)+f(\cdot,u,v), \\
		v_{t}=\Delta v+g(\cdot,u,v),
	\end{array}
	\right.\ \ \text{in}\ (0,T]\times\Omega, \label{eq:CS}
\end{equation}
equipped with suitable initial values and homogeneous Dirichlet boundary data. Such systems arise in models for the growth of bacterial biofilms. 
Biofilms are dense aggregations of microorganisms that form in moist environments. In a biofilm, cells are encased in a slimy matrix of extracellular polymeric substances and adhere to one another and usually to surfaces. A reaction-diffusion model that predicts heterogeneous spatial structures of fully grown biofilms on the mesoscale was introduced and numerically studied by Eberl et al. in \cite{Eberl}. 
It is a system of a quasilinear reaction-diffusion equation for the biomass density $M$ and a semilinear reaction-diffusion equation for the nutrient concentration $C$. The equations are coupled via Monod reaction functions modeling biomass production and nutrient consumption. For the dimensionless variables $C$ and $M$, which are normalized, respectively, with respect to the bulk concentration and the maximum biomass density, the model reads as follows:
\begin{equation}
	\left\{ 
	\begin{array}{l}
		\partial_{t} M=d_{2}\nabla\cdot(D(M)\nabla M)-K_{2}M+K_{3}\frac{CM}{K_{4}+C}, \\
		\partial_{t} C=d_{1}\Delta C-K_{1}\frac{CM}{K_{4}+C}
	\end{array}
	\right. \ \text{in}\ (0,T]\times\Omega, \label{eq:BM}
\end{equation}
where $\Omega\subset \mathbb{R}^{n}, n = 1, 2, 3$, is a bounded domain and $d_{1}, d_{2}>0$, $K_{1}, K_{2}, K_{3}\geq 0$, and $K_{4}> 0$ are given constants. The subregion\begin{align*}
	\Omega_{M}(t)=\{x\in \Omega:M(t,x)>0\}
\end{align*}
defines the actual biofilm situated on the surface, which typically corresponds to the bottom part of the boundary $\partial \Omega$. The biofilm diffusion coefficient $D(M)$ is given by \begin{equation*}
	D(M)=\frac{M^{b}}{(1-M)^{a}},\quad a\geq 1, b>0,
\end{equation*}
which has a degeneracy of porous medium type at $M=0$. It ensures a finite speed of interface propagation and a sharp interface between the biofilm and the surrounding liquid. On the other hand, the singularity at $M = 1$ implies that the spatial spreading becomes very large whenever $M$ approaches values close to $1$. As a consequence, the biomass density remains bounded by its maximum value despite the growth terms in the equation. It is important to note that by setting 
\begin{equation*}
	\phi(u)=\int\limits_{0}^{u}\frac{z^{b}}{(1-z)^{a}}dz,
\end{equation*}we observe that the biofilm-growth model \eqref{eq:BM} is a particular case of the system
\eqref{eq:CS} that we consider. Moreover, in simulation studies, $\Omega$ is typically rectangular and mixed Dirichlet-Neumann boundary conditions are imposed. More specifically, 
homogeneous Neumann boundary conditions for $C$ and $M$ are assumed on the bottom part of the boundary, due to the impermeability of the surface to nutrients and biomass, and also on the lateral boundaries. Nutrients are added through the top boundary, which is described by Dirichlet boundary conditions for $C$, while homogeneous Dirichlet boundary conditions are imposed for $M$. Initially, there are small pockets of biomass of density $M_{0}< 1$  
on the bottom part of the boundary, resembling the initial biofilm colonies on the surface, while $M_0$ vanishes everywhere else in the domain. The nutrient concentration $C_{0}$ is set to the bulk concentration everywhere. The well-posedness of scalar equations and coupled systems of the form \eqref{eq:SE} and \eqref{eq:CS} with  mixed Dirichlet-Neumann boundary conditions on bounded Lipschitz domains has been established in \cite{MullerSonner}. In this paper, to prove the existence of global attractors we consider smooth non-degenerate approximations and therefore consider Problems \eqref{eq:SE} and \eqref{eq:CS} with homogeneous Dirichlet boundary conditions. 

\subsection{Global and exponential attractors: previous results}
The long-time behaviour of a large class of dissipative evolution PDEs can be investigated in terms of global attractors. Global attractors are compact invariant sets of the phase space that attract all bounded subsets under the temporal evolution of the generated semigroup. The global attractor is unique and the minimal compact attracting set. Moreover, for many problems on bounded domains, the (Hausdorff and fractal) dimension of the global attractor is finite (see, e.g., \cite{Hale}, \cite{Temam}, \cite{BabinVishik}, \cite{ChepyzhovVishik}, \cite{Prazak}, \cite{Efendiev2010}). However, global attractors have some drawbacks. For instance, the rate of attraction can be very slow (\cite{Kostin}), global attractors can be sensitive to perturbations (\cite{Raugel}), and in some situations, they cannot capture important transient behaviours (\cite{Takei}). To overcome these drawbacks, Eden et al. \cite{Eden} suggested to consider larger objects, namely, exponential attractors, which are compact positively invariant sets that contain the global attractor, have finite fractal dimension, and attract all bounded sets at an exponential rate. Exponential attractors are more robust under perturbations than global attractors due to the exponential rate of attraction (see, for instance, \cite{MiranvilleZelik2004}). Moreover, the existence of an exponential attractor implies the existence and finite fractal dimension of the global attractor. On the other hand, exponential attractors are only positively invariant and therefore not unique. Hence, there are different methods to construct an exponential attractor.

The first existence proof for exponential attractors in \cite{Eden} was non-constructive, based on the squeezing property of the semigroup and restricted to a Hilbert space setting. Efendiev et al. \cite{EfendievMiranvilleZelik2000} provided an alternative, explicit construction of exponential attractors for semigroups in Banach spaces based on the decomposition the semigroup into a compact and contracting part. In \cite{CzajaEfendiev}, Czaja and Efendiev extended this construction and the existence proof for exponential attractors to a setting that can be applied to degenerate parabolic equations. It covers a noteworthy smoothing estimate formulated in suitable function spaces related to pieces of trajectories of the semigroup and not in the phase space. Efendiev and Zelik further generalized this approach allowing for estimates and function spaces that depend on points in a compact, positively invariant set to construct exponential attractors for equations of porous medium type, see \cite[Remark 3.2]{EfendievZelik2008}. For an overview and comparison of commonly used construction methods for exponential attractors we refer to \cite{CzajaSonner}.

It is important to note that most results on global and exponential attractors concern non-degenerate problems. However, the situation is significantly more complex for equations with singularities and/or degeneracies, such as for porous medium type equations and elliptic-parabolic problems. 
Among the vast literature on the well-posedness and regularity of solutions of degenerate diffusion equations, we particularly mention \cite{Efendiev2009, MitraSonner,Muller} and \cite{MullerSonner} which specifically address systems of the form \eqref{eq:CS}. In \cite{Efendiev2009}, Efendiev et al. developed a well-posedness theory for the biofilm growth model \eqref{eq:BM} in a smooth bounded domain with homogeneous Dirichlet boundary conditions for $M$ based on smooth non-degenerate approximations. However, the analysis does not cover more general domains, non-vanishing Dirichlet boundary data for $M$ and mixed Dirichlet-Neumann boundary conditions. To overcome these restrictions, Hissink Muller and Sonner \cite{MullerSonner} provided  well-posedness results for a broader class of quasilinear parabolic problems, including our problem \eqref{eq:SE}. In particular, the existence, uniqueness and continuous dependence on initial data were shown. The well-posedness result was also extended for coupled systems, including our problem \eqref{eq:CS}, by using Banach's fixed point theorem. In \cite{MitraSonner}, the well-posedness theory was further generalized for systems involving couplings to ODEs.  Furthermore, Hissink Muller \cite{Muller} proved the interior Hölder continuity of local weak solutions of \eqref{eq:SE} that are bounded away from $1$ by intrinsic scaling methods. 

As for the existence of global attractors for porous medium type equations, we mention \cite{Feireisl}, \cite{Andreu}, \cite{Carvalho} and \cite{Efendiev2009}. In particular, Feireisl et al.  considered in \cite{Feireisl} an equation of the form\begin{align}
	u_{t}-\Delta\phi(u)+f(u)=g, \label{eq:Feireisl}
\end{align}in $(0,\infty)\times\mathbb{R}^{n}$ and proved, by applying the splitting method, that the generated semigroup  possesses a global attractor in the weighted space $L^{1}(\mathbb{R}^{n};\rho(x)dx)$ where the weight $\rho \in L^{1}(\mathbb{R}^{n})$ goes to zero at infinity. To prove the asymptotic compactness of the semigroup for our problem we use arguments from this paper. Andreu et al. \cite{Andreu} proved the existence of a global attractor in $L^{\infty}(\Omega)$ for \eqref{eq:Feireisl} with $g=0$ in a bounded smooth domain $\Omega\subset \mathbb{R}^{n}$ with nonlinear Neumann boundary conditions under stronger assumptions on the nonlinearities $\phi$ and $f$. Moreover, Carvalho et al. \cite{Carvalho} considered \eqref{eq:Feireisl} with  $\phi(u)=|u|^{m}$, $m>1$, and $g=0$ in
bounded smooth domains with homogeneous Dirichlet boundary conditions and a globally Lipschitz continuous function $f$ and proved the existence of a global attractor in $H^{-1}(\Omega)$ based on the theory of maximal monotone operators. Finally, in the aforementioned paper \cite{Efendiev2009}, the authors showed that the biofilm growth model \eqref{eq:BM} in a smooth bounded domain $\Omega$ generates a semigroup  which  is Lipschitz continuous in $L^{1}(\Omega)\times L^{1}(\Omega)$ that possesses a global attractor.

To the best of our knowledge, only few results have been obtained concerning the finite fractal dimension of global attractors for equations with singular and/or degenerate diffusion. The primary challenge for degenerate problems is the lack of regularity (smoothing effect) in the vicinity of the degeneracies, which makes standard techniques inapplicable. For instance, the unstable manifold approach (see \cite{Temam}) cannot be applied to derive lower bounds for the dimension of the attractor because the semigroups 
are typically not differentiable. The finite dimensionality of global attractors for porous medium type equations has been shown in \cite{Eden1991}, \cite{EfendievZelik2008} and \cite{EfendievZhigun}. In particular, Eden et al. \cite{Eden1991} proved that doubly nonlinear porous medium type equations of the form\begin{align*}
	\frac{\partial\beta(u)}{\partial t}-\Delta u+g(x,u)=0,
\end{align*}in bounded domains $\Omega$ with homogeneous Dirichlet boundary conditions possess a global attractor in $L^{2}(\Omega)$. Also, assuming higher regularity of the nonlinearities $\beta$ and $g$, the authors showed the finite fractal dimension of the global attractor. In \cite{EfendievZelik2008}, Efendiev and Zelik showed the existence of global and exponential attractors for \eqref{eq:Feireisl} with  $\phi(u)=|u|^{m}$ in bounded domains with homogeneous Dirichlet boundary conditions. 
Assuming that the derivative of the reaction term $f$ is positive at zero, first, the finite fractal dimension of the global attractor was shown, and this result was subsequently used to construct an exponential attractor. Moreover, based on upper and lower bounds for the Kolmogorov $\varepsilon$-entropy the authors provided sufficient conditions on the nonlinearity $f$ that lead to a global attractor with infinite fractal dimension. Using the approach in \cite{EfendievZelik2008}, Efendiev and Zhigun proved in \cite{EfendievZhigun} the existence of an exponential attractor for a class of parabolic systems with degenerate diffusion and chemotaxis assuming suitable conditions on the order of the degeneracy and growth of the chemotactic function.

\subsection{Aim and outline}
Our aim is to prove the existence of global and exponential attractors for problems \eqref{eq:SE} and \eqref{eq:CS} under suitable structural assumptions. The existence of exponential attractors ensures the existence and finite fractal dimension of the global attractor. However, we state a separate existence theorem for global attractors as it holds under much weaker assumptions. It generalizes the results in \cite{Efendiev2009} on the existence of a global attractor for the biofilm model \eqref{eq:BM}.
To construct exponential attractors 
we additionally need a sign condition on $f$ at zero and the global H\"older continuity of solutions, similarly as in \cite{EfendievZelik2008}. 
We simplify the latter proof, where first the finite fractal dimension of the global attractor was shown and then the exponential attractor was constructed,  
by using an abstract existence theorem for exponential attractors, see \cite[Remark 3.2]{EfendievZelik2008} and \cite{CzajaEfendiev}. 
The finite fractal dimension of the global attractor is then  a consequence of the existence of an exponential attractor. Our results are formulated for a broad class of degenerate problems in bounded domains with homogeneous Dirichlet boundary conditions. 
To the best of our knowledge, they are novel in the context of global and exponential attractors.
We remark that system \eqref{eq:CS} is not restricted to biofilm models. Such systems can also be used to describe other phenomena with sharp interfaces propagating at a finite speed. Moreover, our results easily extend to, e.g., the model for quorum sensing induced detachment in biofilms \cite{Emerenini} which was analytically studied in \cite{EmereniniSonner}. It is an extension of the biofilm growth model \eqref{eq:BM} involving one singular-degenerate equation and three semi-linear
reaction diffusion equations for dissolved substrates.

We organize the paper as follows: In Section \ref{sec:HMR}, we introduce the functional setting and state the main results on the existence of global and exponential attractors. In Section \ref{sec:GA}, we first prove the existence of a global attractor for the scalar Problem \eqref{eq:SE}. To this end, we consider smooth nondegenerate approximations for Problem \eqref{eq:SE} that possess smooth classical solutions. We derive uniform bounds and smoothing estimates for the approximate solutions and passing to the limit we obtain a solution of the degenerate problem. This allows us to prove that the original problem \eqref{eq:SE} generates a Lipschitz continuous semigroup in a suitable phase space endowed with the $L^{1}(\Omega)$-topology. We show the existence of a bounded absorbing set and the asymptotic compactness of the trajectories, and then extend the result to the coupled system \eqref{eq:CS}. In Section \ref{sec:EA}, under the key assumptions of global Hölder continuity of solutions  and appropriate sign conditions on the partial derivative of the nonlinearity $f$ in the vicinity of the degeneracy, we prove the existence of exponential attractors for \eqref{eq:SE}. As a consequence, the global attractor has finite fractal dimension. We also extend the existence result for exponential attractors for the coupled system \eqref{eq:CS}.

\section{Hypotheses and Main Results}\label{sec:HMR}
In this section, we state the hypotheses and main results. The proofs will be given in  subsequent sections. 

\subsection{Well-Posedness Results}

We recall the well-posedness results in \cite{MullerSonner} and \cite{MullerThesis} for the scalar equation \eqref{eq:SE} and the coupled system \eqref{eq:CS}, and introduce the functional setting for the associated semigroups. 
First, we consider the following initial-boundary value problem for \eqref{eq:SE} with homogeneous Dirichlet boundary conditions,
\begin{equation}
	\begin{cases}
		u_{t}=\Delta \phi(u)+f(\cdot,u)&\text{in}\ (0,T]\times\Omega,\\
		u=u_{0} & \text{in}\ \{0\}\times\Omega,\\
		\phi(u)=0& \text{on}\ (0,T]\times \partial\Omega,
	\end{cases}
	 \label{eq:SP}
\end{equation} 
where $T>0$, $\Omega\subset \mathbb{R}^{n}, n\geq 1,$ is a bounded domain with boundary $\partial\Omega$. Let $\Omega_{T}=(0,T]\times\Omega$ and $I=(-1,1)$. 

We assume that  $\phi:I\to \mathbb{R}$ satisfies the following structural assumptions,
\begin{align}
	&\phi\in C(I),\ \phi(0)=0,\ \phi\ \text{is strictly increasing},\label{eq:P1}\\ &\phi\ \text{is surjective,} \label{eq:P2}\\ 
	&\left\{
	\begin{array}{ll}
		\phi \ \text{is piece-wise continuously differentiable, } \phi'(0)=0\\
		\phi \ \text{is\ convex on}\ [0,1)\ \text{and}\ \text{concave on}\ (-1,0]. 
	\end{array}  \right.\label{eq:P3}
\end{align}

\begin{remark}
	\begin{itemize}
		\item[i)] The singularities of $\phi$ are encoded in \eqref{eq:P2}, which implies that $\phi(\pm 1)=\pm \infty$. 
		\item[ii)] The condition $\phi'(0)=0$ in \eqref{eq:P3} reflects that $\phi$ has a porous medium type degeneracy. 
		\item[iii)] The assumptions \eqref{eq:P1} and \eqref{eq:P2} are sufficient for the well-posedness. The condition \eqref{eq:P3} yields additional regularity of the solutions (see \cite{MullerSonner}).
	\end{itemize}
\end{remark}
We suppose that the reaction function $f:\Omega\times I \rightarrow\mathbb{R}$ satisfies the \textit{Carathéodory conditions}, i.e., $f(\cdot,z)$ is measurable for all $z\in I$,  $f(x,\cdot)$ is continuous for almost every $x\in \Omega$, and the function $x\to \lVert f(x,\cdot)\rVert_{C(J)}$ is summable for each compact $J\subset I$. We further assume that  $f(\cdot,0)$ is essentially bounded and $f$ is uniformly Lipschitz continuous with respect to the second argument, i.e. there exists $L\geq 0$ such that
\begin{align}
	\lVert f(\cdot,z_{1})-f(\cdot,z_{2})\rVert_{L^{\infty}(\Omega)} \leq L|z_{1}-z_{2}|\qquad \forall z_{1}, z_{2}\in I. \label{eq:F1}
\end{align}

\begin{remark}
	Condition \eqref{eq:F1} is satisfied, e.g. if $f:\Omega\times\mathbb{R}\to\mathbb{R}$ is locally Lipschitz continuous with respect to $z\in \mathbb{R}$. 
\end{remark}

We assume that  the initial data 
\begin{equation}
	u_{0}:\Omega\to I\ \text{is measurable and}\ \Phi(u_{0})\in L^{1}(\Omega), \label{eq:IC}
\end{equation}where $\Phi(z):=\int\limits_{0}^{z}\phi(s)ds$. We note that the quantity $\int\limits_{\Omega}\Phi(u)$ represents the (absolute) energy of the solution $u$. 

Now, we introduce the functional setting for problem \eqref{eq:SP}. We consider the closed subspace $ V=H^{1}_0(\Omega)$
of $H^{1}(\Omega)$ and denote its dual by $V^{*}=H^{-1}(\Omega)$. The solutions need to satisfy $\phi(u)\in L^{2}(0,T;V)$ to ensure that $\phi(u)=0$ a.e. on $\partial\Omega$.

\begin{remark}
	It can be inferred from \eqref{eq:P1} and \eqref{eq:IC} that $u_{0}\in L^{1}(\Omega)$ (cf. \cite{MullerSonner}, Remark 2.2). Indeed, it follows from \eqref{eq:P1} that \begin{equation*}
		\Phi(z_{1})-\Phi(z_{2})\geq\phi(z_{2})(z_{1}-z_{2})\ \ \ \forall z_{1}, z_{2}\in I.
	\end{equation*}
	Taking $z\in I, \delta>0$ and setting $z_{1}=z, z_{2}=\text{sgn}(z)\beta(\frac{1}{\delta})$ for $\beta=\phi^{-1}$, and using that $\phi(0)=0$, this implies that  
	\begin{equation*}
		\Phi(z)\geq \Phi(z)-\Phi(z_{2})\geq \frac{1}{\delta}\text{sgn}(z)(z-z_{2})=\frac{1}{\delta}\left(|z|-\beta(\frac{1}{\delta})\right).
	\end{equation*}
	Hence, we obtain
	\begin{equation*}		|z|\leq\delta\Phi(z)+\beta(\frac{1}{\delta}),\quad z\in I,\delta>0, \label{eq:AV}
	\end{equation*} 
	which, together with \eqref{eq:IC}, yields that $u_{0}\in L^{1}(\Omega)$. 
\end{remark}

We denote by $(\cdot,\cdot)$  the dual pairing between $V^{*}$ and $V$ and by $\left<\cdot,\cdot\right>$ the inner product in $L^{2}(\Omega)$, and define a weak solution to problem \eqref{eq:SP} as follows:

\begin{definition}\label{WS}
	A measurable function $u:\Omega_{T}\to I$ is a weak solution of \eqref{eq:SP} if $u\in H^{1}(0,T;V^{*})$ with weak derivative $u_{t}$, $u(0)=u_{0}\in V^*$ and $\phi(u)\in L^{2}(0,T;V)$ that satisfies the scalar equation $\eqref{eq:SP}_{1}$ in  distributional sense, i.e. the identity 
	\begin{equation}
		\int\limits_{0}^{T}[(u_{t},\eta)+\langle\nabla\phi(u),\nabla\eta\rangle]dt=\int\limits_{0}^{T}\langle f(\cdot,u),\eta\rangle dt\label{eq:DI}  
	\end{equation}
	holds for all $\eta\in L^{2}(0,T;V)$. 
\end{definition}

\begin{remark}
	Property \eqref{eq:F1}, the essential boundedness of $f(\cdot,0)$ and the fact that $u\in (-1,1)$ imply that $f(\cdot,u)\in  L^{\infty}(0,T;L^{2}(\Omega))$ so that the right-hand side of \eqref{eq:DI} is well-defined.  
\end{remark}
We formulate the following well-posedness result for the scalar problem \eqref{eq:SP}, which follows from [\cite{MullerSonner}, Theorems 2.6–2.7] in the special case of homogeneous Dirichlet boundary conditions.

\begin{theorem}\label{WPS}
	Under the assumptions \eqref{eq:P1}-\eqref{eq:P2} and \eqref{eq:F1}-\eqref{eq:IC}, problem \eqref{eq:SP} has a unique weak solution $u:\Omega_{T}\to I$ that satisfies the energy estimate 
	\begin{align}
    \begin{split}
		&\lVert\Phi(u)\rVert_{L^{\infty}(0,T;L^{1}(\Omega))}+\lVert\nabla\phi(u)\rVert_{L^{2}(\Omega_{T})}^{2} \\
&		\leq C\left(\lVert\Phi(u_{0})\rVert_{L^{1}(\Omega)}+\lVert f(\cdot,u)\rVert_{L^{2}(\Omega_{T})}\lVert\phi(u)\rVert_{L^{2}(\Omega_{T})}\right), \label{eq:REE}
        \end{split}
	\end{align}
    where $C\geq 0$ is some constant. 
	
	The solution depends continuously on initial data. More specifically, if $u$ and $\Tilde{u}$ are two solutions of \eqref{eq:SP} corresponding to the functions $f$, $\Tilde{f}$ satisfying \eqref{eq:F1} and initial data $u_{0}$, $\Tilde{u_{0}}$ satisfying \eqref{eq:IC}, then the following $L^{1}$-contraction holds,
	\begin{equation}
		\lVert u(t)-\Tilde{u}(t)\rVert_{L^{1}(\Omega)}\leq e^{Lt}\left(\lVert u_{0}-\Tilde{u_{0}}\rVert_{L^{1}(\Omega)}+\int\limits_{0}^{t}\lVert f(\cdot,u)-\Tilde{f}(\cdot,u)\rVert_{L^{1}(\Omega)}dt\right), \label{eq:L1C}
	\end{equation}
	for all $t\in [0, T]$, where $L$ is the Lipschitz constant in \eqref{eq:F1}.
	
	Moreover, if $\phi(u_{0})\in L^{\infty}(\Omega)$, then $\phi(u)$ is bounded by a constant depending on $\Omega$, $L$, $\lVert f(\cdot,0)\rVert_{L^{\infty}(\Omega)}$ and $\phi(u_{0})$. If the condition \eqref{eq:P3} holds, then we have $u\in C([0,T];L^{1}(\Omega))$. If both assumptions hold and $\phi(u_{0})\in V$, then $\phi(u)\in H^{1}(0,T;L^{2}(\Omega))$.
\end{theorem}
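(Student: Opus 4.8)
The plan is to establish well-posedness through smooth non-degenerate approximations, the standard device for porous medium type equations, since the conditions $\phi'(0)=0$ and $\phi(\pm1)=\pm\infty$ make the equation neither uniformly parabolic nor defined up to the endpoints of $I$. First I would replace $\phi$ by a family $\phi_\eps$ that is smooth and uniformly elliptic, satisfying $\eps\le\phi'_\eps\le\eps^{-1}$ with $\phi_\eps\to\phi$ locally uniformly on $I$, while mollifying $f$ and the initial datum so that $\phi_\eps(u_0^\eps)$ is smooth and compatible with the homogeneous Dirichlet condition. For each $\eps>0$ the regularized problem is a uniformly parabolic quasilinear equation with smooth data, so standard quasilinear parabolic theory yields a unique smooth solution $u_\eps$; the singularity $\phi(\pm1)=\pm\infty$, reflected in $\phi_\eps$, confines $u_\eps$ to $(-1,1)$.

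Second, I would derive a priori estimates uniform in $\eps$. Testing the regularized equation against $\phi_\eps(u_\eps)$ and using $\partial_t\Phi_\eps(u_\eps)=\phi_\eps(u_\eps)\partial_t u_\eps$ with $\Phi_\eps'=\phi_\eps$ gives, after integration in time,
\[
\bignorm{\Phi_\eps(u_\eps)}_{L^\infty(0,T;L^1(\Omega))}+\norm{\nabla\phi_\eps(u_\eps)}_{L^2(\Omega_T)}^2\le C\bigl(\norm{\Phi_\eps(u_0^\eps)}_{L^1(\Omega)}+\norm{f}_{L^2(\Omega_T)}\norm{\phi_\eps(u_\eps)}_{L^2(\Omega_T)}\bigr),
\]
the $\eps$-uniform form of \eqref{eq:REE}, where \eqref{eq:F1} and the essential boundedness of $f(\cdot,0)$ control the right-hand side; reading off $\partial_t u_\eps=\Delta\phi_\eps(u_\eps)+f(\cdot,u_\eps)$ then bounds $\partial_t u_\eps$ in $L^2(0,T;V^*)$. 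Along a subsequence $\phi_\eps(u_\eps)\rightharpoonup\chi$ in $L^2(0,T;V)$ and $\partial_t u_\eps\rightharpoonup\partial_t u$ in $L^2(0,T;V^*)$, and an Aubin--Lions/Simon compactness argument gives strong, hence a.e., convergence $u_\eps\to u$. The only nontrivial point is the identification $\chi=\phi(u)$, which follows from the monotonicity of $\phi$ via a Minty-type argument combined with the a.e.\ convergence and the local uniform convergence $\phi_\eps\to\phi$. Passing to the limit in the weak formulation produces a weak solution in the sense of Definition \ref{WS} satisfying \eqref{eq:REE}.

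The main obstacle is uniqueness and the $L^1$-contraction \eqref{eq:L1C}, which I expect to be the hardest step since solutions only satisfy $u\in H^1(0,T;V^*)$ with $\phi(u)\in L^2(0,T;V)$. For two solutions $u,\tilde u$ with data $f,u_0$ and $\tilde f,\tilde u_0$, the plan is to test the equation for $u-\tilde u$ against a smooth nondecreasing approximation $p_\delta$ of $\mathrm{sgn}(\phi(u)-\phi(\tilde u))$; because $\phi$ is strictly increasing this sign coincides with $\mathrm{sgn}(u-\tilde u)$. The diffusion contribution $\int p_\delta'(\cdot)\abs{\nabla(\phi(u)-\phi(\tilde u))}^2\ge0$ has the right sign and is discarded, the time term converges to $\tfrac{d}{dt}\norm{u-\tilde u}_{L^1(\Omega)}$ as $\delta\to0$, and splitting the reaction term as $[f(\cdot,u)-\tilde f(\cdot,u)]+[\tilde f(\cdot,u)-\tilde f(\cdot,\tilde u)]$ and applying \eqref{eq:F1} bounds it by $\norm{f(\cdot,u)-\tilde f(\cdot,u)}_{L^1(\Omega)}+L\norm{u-\tilde u}_{L^1(\Omega)}$; Gr\"onwall's inequality then yields \eqref{eq:L1C}, and uniqueness is the special case $f=\tilde f$, $u_0=\tilde u_0$.

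Finally, for the regularity assertions I would argue at the approximate level and pass to the limit. Boundedness of $\phi(u)$ when $\phi(u_0)\in L^\infty(\Omega)$ follows by comparison of $u_\eps$ with constant super- and subsolutions built from $\norm{f(\cdot,0)}_{L^\infty(\Omega)}$, $L$ and $\phi(u_0)$, giving an $\eps$-independent bound. The continuity $u\in C([0,T];L^1(\Omega))$ under \eqref{eq:P3} follows from the contraction estimate together with a density/approximation argument. The strongest assertion, $\phi(u)\in H^1(0,T;L^2(\Omega))$ when additionally $\phi(u_0)\in V$, is obtained by testing the regularized equation with $\partial_t\phi_\eps(u_\eps)$: the resulting identity contains $\int\phi_\eps'(u_\eps)\abs{\partial_t u_\eps}^2\ge0$ and $\tfrac12\tfrac{d}{dt}\norm{\nabla\phi_\eps(u_\eps)}_{L^2(\Omega)}^2$, and the convexity on $[0,1)$ and concavity on $(-1,0]$ in \eqref{eq:P3} are precisely what keep the boundary and lower-order contributions controlled near the degeneracy, making the bound uniform in $\eps$.
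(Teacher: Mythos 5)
The paper does not actually prove Theorem~\ref{WPS}: it is imported wholesale from [Hissink Muller--Sonner, Theorems 2.6--2.7] (reference \cite{MullerSonner}), specialized to homogeneous Dirichlet data, so there is no in-paper proof to match your argument against. That said, your sketch follows the standard and expected route, and it closely parallels the approximation machinery this paper \emph{does} set up in Section~\ref{sec:GA} for its own purposes (Proposition~\ref{pro:ext_approx} through Theorem~\ref{thm:existence}): non-degenerate regularizations $\phi_R$ with $m_R\le\phi_R'\le M_R$, classical solvability by quasilinear parabolic theory, $L^\infty$ confinement by comparison with elliptic super/subsolutions, the energy estimate from testing with $\phi_R(u_R)$, and the $H^1(0,T;L^2(\Omega))$ regularity of $\phi(u)$ from testing with $\partial_t\phi_R(u_R)$. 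Two points deserve care. First, your compactness step as written does not go through: Aubin--Lions/Simon cannot be applied to $u_\eps$ itself, because the uniform spatial regularity is carried by $\phi_\eps(u_\eps)$ in $L^2(0,T;V)$, not by $u_\eps$, while the time-derivative bound is on $u_\eps$ in $L^2(0,T;V^*)$, not on $\phi_\eps(u_\eps)$. The paper's Section~3 circumvents this by bounding $\partial_t\phi_R(u_R)$ in $L^2(\Omega_T)$ (using the $\int\phi_R'(u_R)|\partial_t u_R|^2$ estimate and the uniform $L^\infty$ bound on $\phi_R'(u_R)$), applying Arzel\`a--Ascoli to $\phi_R(u_R)$ in $C([0,T];L^2(\Omega))$, and then inverting $\phi$ via Jensen's inequality and superadditivity; your Minty-type identification of $\chi=\phi(u)$ can serve the same purpose, but it is the essential step, not an afterthought. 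Second, in the $L^1$-contraction the passage from $\langle \partial_t(u-\tilde u),\,p_\delta(\phi(u)-\phi(\tilde u))\rangle$ to $\tfrac{d}{dt}\|u-\tilde u\|_{L^1(\Omega)}$ requires a chain-rule/integration-by-parts lemma of Mignot--Bamberger or Alt--Luckhaus type, since $\partial_t(u-\tilde u)$ lives only in $V^*$ while the test function lives in $V$; this is where the genuine work in \cite{MullerSonner} lies, and your sketch treats it as routine. Neither issue is a wrong idea, but both are the places where a complete proof must supply nontrivial lemmas.
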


Next, we consider the initial-boundary value problem for the coupled system \eqref{eq:CS}. As in the scalar case, we impose homogeneous Dirichlet boundary conditions,
\begin{equation}
	\begin{cases}
		u_{t}=\Delta \phi(u)+f(\cdot,u,v)& \text{in}\ (0,T]\times\Omega, \\
		v_{t}=\Delta v+g(\cdot,u,v)&
		\text{in}\ (0,T]\times\Omega,\\
		u=u_{0}, \ \ v=v_{0} & \text{in}\ \  \{0\}\times\Omega,\\
		\phi(u)=0, v=0&\text{on}\ (0,T]\times\partial\Omega,
        \end{cases}
	 \label{eq:CP}
\end{equation}
and assume that $I=[0, 1)$. This is motivated by applications such as the biofilm growth model \eqref{eq:BM}, where solutions describe nonnegative quantities such as densities or concentrations. We suppose that the reaction functions $f, g :\Omega\times[0, 1)\times[0, 1] \to \mathbb{R}$ are measurable, bounded and uniformly Lipschitz continuous with respect to the last two arguments, i.e. there exists $L\geq 0$ such that
\begin{align}
	&\quad \lVert f(\cdot,u_{1},v_{1})-f(\cdot,u_{2},v_{2})\lVert_{L^{\infty}(\Omega)}+\lVert g(\cdot,u_{1},v_{1})-g(\cdot,u_{2},v_{2})\lVert_{L^{\infty}(\Omega)}\notag\\
	&\leq L\left(|u_{1}-u_{2}|+|v_{1}-v_{2}|\right)\qquad \forall u_{1},u_{2}\in [0,1), v_{1},v_{2}\in [0,1]. \label{eq:FG1}  
\end{align}
Moreover, to ensure that the solutions remain nonnegative and bounded, we assume that
\begin{equation}
	f(\cdot,0, v)\geq 0\ ,\ \ 
	g(\cdot,u, 0)\geq 0\ \text{ and }\ g(\cdot,u,1)\leq 1\ \ \text{in}\ \Omega\quad  \forall u\in [0,1),  v\in [0,1]. \label{eq:FG2}
\end{equation}

\begin{remark}
	The condition on $f$ in \eqref{eq:FG2} yields the nonnegativity of the solution $u$, which validates the choice of the interval $I=[0,1)$. It is not an open interval as assumed previously. However, the structural functions can be appropriately extended to the open interval $(-1,1)$. We then apply the well-posedness theory for \eqref{eq:SP} in combination with a comparison principle, which yields that solutions assume their values in $[0, 1)$.  Moreover, the conditions on $g$ in \eqref{eq:FG2} ensure that $v$ takes values in the interval $[0,1]$. For further details, we refer to \cite{MullerSonner}.
\end{remark}

Solutions of system \eqref{eq:CP} are analogously defined as in Definition~\ref{WS} for problem \eqref{eq:SP}.
We recall the well-posedness result for the coupled system \eqref{eq:CP}, given in \cite[Theorem 2.9]{MullerSonner}, which we formulate for the specific case of homogeneous Dirichlet boundary conditions.

\begin{theorem}\label{WPC}
	Assume that the function $\phi:[0,1)\to [0,\infty)$ satisfies the conditions \eqref{eq:P1}-\eqref{eq:P2} and $f, g$ satisfy  \eqref{eq:FG1}-\eqref{eq:FG2}. Moreover, the initial data $u_{0}:\Omega\to [0,1)$ satisfies \eqref{eq:IC} and $v_{0}:\Omega\to [0,1]$ is measurable. Then, there exists a unique solution $(u,v):\Omega_{T}\to[0,1)\times[0,1]$ of problem \eqref{eq:CP} in $L^{\infty}(0,T;L^{1}(\Omega))\times C([0,T];L^{1}(\Omega))$. The solution satisfies an $L^{1}$-contraction estimate and an energy inequality analogous to \eqref{eq:REE} and \eqref{eq:L1C}, respectively. Furthermore, if $\phi$ satisfies \eqref{eq:P3}, then $(u,v)\in C([0,T];L^{1}(\Omega)\times L^{1}(\Omega))$.
	
	In addition, if $u_{0}\leq 1-\theta$ for some $\theta\in(0,1)$, then $u\leq 1-\mu$ for some $\mu\in(0,1)$ depending on $\Omega, L$ and $\phi(1-\theta)$. In this case, if $\phi(u_{0})\in V$ and $\phi$ satisfies \eqref{eq:P3}, then $\phi(u)\in H^{1}(0,T;L^{2}(\Omega))$.
\end{theorem}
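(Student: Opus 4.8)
The plan is to decouple the system by a fixed-point argument, treating each equation with the theory already available for it and coupling the two through Banach's contraction principle, exactly as indicated in the introduction. First I would introduce two solution operators. Given $v \in C([0,T];L^{1}(\Omega))$ taking values in $[0,1]$, the reaction term $(x,t,u)\mapsto f(x,u,v(x,t))$ is measurable, bounded and Lipschitz in $u$ with constant $L$ by \eqref{eq:FG1}, so the (time-dependent version of the) scalar theory of Theorem \ref{WPS} yields a unique weak solution of the degenerate equation with datum $u_{0}$; call it $u=S_{1}(v)$. Likewise, given $u$ taking values in $[0,1)$, the semilinear problem $v_{t}=\Delta v+g(\cdot,u,v)$ with homogeneous Dirichlet data and datum $v_{0}$ has a unique solution $v=S_{2}(u)$ by standard parabolic theory, since $g$ is bounded and globally Lipschitz by \eqref{eq:FG1}. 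A pair $(u,v)$ solves \eqref{eq:CP} precisely when it is a fixed point of $\mathcal{T}(u,v)=(S_{1}(v),S_{2}(u))$.

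Next I would establish the contraction. Applying the $L^{1}$-contraction \eqref{eq:L1C} to $S_{1}$ on an interval $[0,T_{0}]$ with the two reaction functions $f(\cdot,\cdot,v_{1})$, $f(\cdot,\cdot,v_{2})$ and the common datum $u_{0}$, and bounding $\lVert f(\cdot,u_{1},v_{1})-f(\cdot,u_{1},v_{2})\rVert_{L^{1}(\Omega)}\leq L\lVert v_{1}-v_{2}\rVert_{L^{1}(\Omega)}$ via \eqref{eq:FG1}, gives
\[
\sup_{t\in[0,T_{0}]}\lVert S_{1}(v_{1})(t)-S_{1}(v_{2})(t)\rVert_{L^{1}(\Omega)}\leq L\,T_{0}\,e^{LT_{0}}\sup_{t\in[0,T_{0}]}\lVert v_{1}(t)-v_{2}(t)\rVert_{L^{1}(\Omega)}.
\]
An analogous Gronwall estimate for the semilinear equation, using that the heat semigroup is an $L^{1}$-contraction, yields the same type of bound for $S_{2}$ with the roles of $u$ and $v$ exchanged. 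Hence, choosing $T_{0}$ so small that $L\,T_{0}\,e^{LT_{0}}<1$, the map $\mathcal{T}$ is a contraction on $L^{\infty}(0,T_{0};L^{1}(\Omega))\times C([0,T_{0}];L^{1}(\Omega))$, producing a unique local solution; since $T_{0}$ depends only on $L$, the argument can be iterated on consecutive intervals to cover any $[0,T]$.

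I would then confine the solution to the physical range. Extending $f$ and $g$ outside $[0,1)\times[0,1]$ (for instance by freezing the last two arguments at the boundary values) and invoking the comparison principle, the sign conditions \eqref{eq:FG2} make $u\equiv 0$ and $v\equiv 0$ subsolutions and confine $v$ from above, so that $(u,v)$ takes values in $[0,1)\times[0,1]$ and the precise form of the extension is irrelevant. The $L^{1}$-contraction and the energy inequality for the system then follow by combining the corresponding scalar estimates through the coupling terms, and the continuity $(u,v)\in C([0,T];L^{1}(\Omega)\times L^{1}(\Omega))$ under \eqref{eq:P3} comes from the continuity statement of Theorem \ref{WPS} for $u$ together with standard regularity for $v$. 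Finally, if $u_{0}\leq 1-\theta$ then $\phi(u_{0})\in L^{\infty}(\Omega)$, and the boundedness statement of Theorem \ref{WPS} combined with comparison gives $\phi(u)\leq \phi(1-\mu)$, i.e. $u\leq 1-\mu$; the asserted $H^{1}(0,T;L^{2}(\Omega))$ regularity of $\phi(u)$ is then immediate from the last line of Theorem \ref{WPS}.

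The main obstacle, and the reason the degeneracy matters here, is that the contraction for the first equation is available only in the $L^{1}$-topology, so the entire fixed-point scheme must be run in $L^{1}$, whereas the comparison principle and the Lipschitz and boundedness hypotheses on $f,g$ require pointwise control of the range of $(u,v)$. Reconciling the weak $L^{1}$-setting needed to close the contraction with the order-theoretic confinement to $[0,1)\times[0,1]$ is the delicate point; it is precisely what forces the detour through the comparison principle and the extension of the nonlinearities, rather than a direct energy-based uniqueness argument.
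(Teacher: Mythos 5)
The paper does not prove Theorem~\ref{WPC} itself: it is recalled verbatim (specialized to homogeneous Dirichlet data) from \cite[Theorem 2.9]{MullerSonner}, and the introduction states that the cited proof proceeds via Banach's fixed point theorem combined with the scalar well-posedness theory and a comparison principle. Your proposal follows exactly that route --- decoupling into the solution operators $S_1$, $S_2$, closing the contraction in the $L^1$-topology on a short time interval via \eqref{eq:L1C} and its semilinear analogue, iterating in time, and confining the range to $[0,1)\times[0,1]$ by extending $f,g$ and invoking comparison --- so it is essentially the same argument as the source the paper relies on, and the sketch is sound.
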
\begin{remark}
	In view of applications, the last property in Theorem~\ref{WPC} is crucial. Indeed, the biomass density remains
	bounded by a constant strictly less than one due to the singularity
	in the diffusion coefficient $\phi'$.
\end{remark}

\subsection{Long-time Behaviour of Solutions}

In this section we present the main results that address the long-time behaviour of solutions in terms of global and exponential attractors for the semigroups generated by the scalar problem \eqref{eq:SP} and the coupled problem \eqref{eq:CP}. The existence of an exponential attractor will also imply that the fractal dimension of the global attractor is finite.


We recall the definition of a global attractor for a semigroup.
\begin{definition}\label{GA} 
	The set $\mathcal{A}\subset E$ is a global attractor for a semigroup $\left\{ S\left( t\right) \right\}
	_{t\geq 0}$ in a Banach space $E$ iff\par
	\begin{itemize}
		\item[\textbf{i)}] $\mathcal{A}$ is compact in $E$,\par
		\item[\textbf{ii)}] $\mathcal{A}$ is invariant, i.e., $S\left( t\right)
		\mathcal{A}=\mathcal{A}$ $\forall t\geq 0$,\par
		\item[\textbf{iii)}] $\mathcal{A}$ attracts all bounded subsets of $E$, i.e. for any bounded set $D\subset E$ \begin{equation*}
			\lim_{t\rightarrow \infty }d_{E}(S\left(t\right) D|\mathcal{A})=0
		\end{equation*}
		where $d_{E}(A|B)=\sup\limits_{x\in A}dist(x,B)=\sup\limits_{x\in A}\inf\limits_{y\in B}\lVert x-y\rVert$ is the Hausdorff semi-distance.
	\end{itemize}
\end{definition}

We introduce the phase space 
\begin{equation*}
	\mathcal{X}:=\overline{\{u\in L^{1}(\Omega):\|u\|_{L^\infty(\Omega)}<1, \phi(u)\in V\}},
\end{equation*}
where the bar denotes the closure taken in $L^1(\Omega)$. Then, using Theorem~\ref{WPS} we will show that problem \eqref{eq:SP} generates a semigroup $\{\mathcal{S}(t)\}_{t\geq 0}$ in $\mathcal{X}$ that is Lipschitz continuous in $L^{1}(\Omega)$.  More specifically, $\mathcal{S}(t)u_{0}=u(t)$, where $u\in C([0,T];L^{1}(\Omega))$ is the unique weak solution
of \eqref{eq:SP} with initial data $u_{0}\in\mathcal{X}$.
We note that $\mathcal{X}$
is positively invariant and bounded in $L^{1}(\Omega)$.  

We can now state our result on the existence of the global attractor for the problem \eqref{eq:SP}. In addition to \eqref{eq:P1}-\eqref{eq:P3}, we assume that $\phi$ satisfies\begin{align}
    |\phi(z)|\leq c_{1}(1-|z|)^{1-a}+c_{2} \qquad \forall z\in I, \label{eq:adphi}
\end{align}for some constants $a>1$ and $c_1,c_2\geq 0$.

\begin{theorem}\label{GAS}
	Let  \eqref{eq:P1}--\eqref{eq:IC} and \eqref{eq:adphi} be satisfied.
	Then, the semigroup $\{\mathcal{S}(t)\}_{t\geq 0}$ generated by problem \eqref{eq:SP} has a global attractor $\mathcal{A}$ in $\mathcal{X}$ endowed with the $L^{1}(\Omega)$-topology, which has the form $\mathcal{A}=\omega(\mathcal{X})$. 
\end{theorem}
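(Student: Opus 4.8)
The plan is to verify the standard criterion for the existence of a global attractor: once we know that $\{\mathcal{S}(t)\}_{t\geq0}$ is a (Lipschitz) continuous semigroup on $\mathcal{X}$ that possesses a bounded absorbing set and is asymptotically compact in the $L^{1}(\Omega)$-topology, the general theory (see, e.g., \cite{Temam,BabinVishik}) yields a global attractor given by the $\omega$-limit set of the absorbing set. Since the whole phase space $\mathcal{X}$ is bounded in $L^{1}(\Omega)$ (every element satisfies $|u|\leq1$ a.e.) and positively invariant, it will serve as the absorbing set, so that the attractor takes the announced form $\mathcal{A}=\omega(\mathcal{X})$. First I would record the semigroup properties: by Theorem~\ref{WPS} the map $\mathcal{S}(t)u_{0}=u(t)$ is well defined on the dense subset of $\mathcal{X}$ consisting of data with $\phi(u_{0})\in V$, it is continuous in time because $u\in C([0,T];L^{1}(\Omega))$ under \eqref{eq:P3}, and the $L^{1}$-contraction \eqref{eq:L1C} makes it Lipschitz in $L^{1}(\Omega)$ uniformly on finite time intervals; the contraction also lets me extend $\mathcal{S}(t)$ to all of $\mathcal{X}$ by density and verify the semigroup identity.

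Next I would produce a (refined) bounded absorbing set through an energy-dissipation estimate. Testing the equation with $\phi(u)$ and using the homogeneous Dirichlet condition $\phi(u)=0$ on $\partial\Omega$ gives the identity
\[
\frac{d}{dt}\int_{\Omega}\Phi(u)\,dx=-\|\nabla\phi(u)\|_{L^{2}(\Omega)}^{2}+\int_{\Omega}\phi(u)\,f(\cdot,u)\,dx.
\]
Because $|u|<1$ and $f$ satisfies \eqref{eq:F1}, the reaction term is controlled by $\tfrac12\|\nabla\phi(u)\|_{L^{2}(\Omega)}^{2}+C$ via Cauchy--Schwarz and the Poincaré inequality on $V=H^{1}_{0}(\Omega)$; bounding $\int_{\Omega}\Phi(u)$ from above in terms of $\|\phi(u)\|_{L^{2}(\Omega)}^{2}$ (which is where the growth control \eqref{eq:adphi} on $\phi$ near the singularities enters, guaranteeing the comparison between $\Phi$ and $\phi^{2}$ and the uniform integrability of the energy) and invoking Poincaré once more, I obtain a differential inequality of the form $\frac{d}{dt}\int_{\Omega}\Phi(u)\leq-c\int_{\Omega}\Phi(u)+C$. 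Gronwall's lemma then shows that the energy $\int_{\Omega}\Phi(u(t))$ is eventually bounded by a constant independent of $u_{0}\in\mathcal{X}$, which yields a bounded absorbing set inside $\mathcal{X}$.

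The main work, and the principal obstacle, is the asymptotic compactness: for any $t_{n}\to\infty$ and $u_{0,n}\in\mathcal{X}$ the sequence $\mathcal{S}(t_{n})u_{0,n}$ must be precompact in $L^{1}(\Omega)$. Here the degeneracy $\phi'(0)=0$ prevents any direct smoothing of $u$ itself, so I would argue as in \cite{Feireisl}. Fixing a window length $\tau>0$ and writing $w_{n}(s)=\mathcal{S}(t_{n}-\tau+s)u_{0,n}$, the data $w_{n}(0)$ lie in the absorbing set for $n$ large, and the uniform (in $n$, and stable under the non-degenerate approximation $\phi_{\varepsilon}$ used to construct the solutions) estimates give $\phi(w_{n})$ bounded in $L^{2}(0,\tau;V)$ and $\partial_{t}w_{n}$ bounded in $L^{2}(0,\tau;V^{*})$, while $\|w_{n}\|_{L^{\infty}(\Omega)}\leq1$. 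From these bounds one extracts $w_{n}\rightharpoonup w$ and $\phi(w_{n})\rightharpoonup\chi$ weakly; the identification $\chi=\phi(w)$ together with strong convergence $w_{n}\to w$ in $L^{1}(\Omega\times(0,\tau))$ follows from the monotonicity of $\phi$ by a Minty--Browder/compensated-compactness argument, exactly the point where the degenerate structure is handled and where \eqref{eq:adphi} controls the singular part to secure uniform integrability. Finally, the uniform equicontinuity in time provided by the bound on $\partial_{t}w_{n}$ upgrades the space-time convergence to convergence at the endpoint $s=\tau$, giving $\mathcal{S}(t_{n})u_{0,n}=w_{n}(\tau)\to w(\tau)$ in $L^{1}(\Omega)$ along a subsequence.

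With the bounded absorbing set $\mathcal{X}$ and asymptotic compactness in hand, the abstract existence theorem applies and produces the global attractor $\mathcal{A}$, which, because $\mathcal{X}$ is itself a bounded positively invariant absorbing set, coincides with $\omega(\mathcal{X})$. I expect the genuinely delicate step to be the compactness lemma for the degenerate equation---passing from the weak bounds on $\phi(u)$ and $u_{t}$ to strong $L^{1}$ compactness of $u$ at a fixed large time---since the standard Aubin--Lions compactness does not apply to $u$ directly and one must exploit the monotonicity of $\phi$ together with the singular growth control \eqref{eq:adphi}.
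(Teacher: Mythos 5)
Your overall architecture (continuous semigroup $+$ bounded positively invariant phase space as absorbing set $+$ asymptotic compactness $\Rightarrow$ $\mathcal{A}=\omega(\mathcal{X})$ via the abstract theorem) is exactly the paper's, and your treatment of the semigroup via the $L^1$-contraction and density is fine. The two substantive issues are in the compactness argument. First, your mechanism for upgrading space--time convergence to convergence at the endpoint $s=\tau$ does not work as stated: the bound on $\partial_t w_n$ in $L^2(0,\tau;V^*)$ gives equicontinuity only in $H^{-1}(\Omega)$, and convergence of $w_n(\tau)$ in $H^{-1}(\Omega)$ (even with $\|w_n\|_{L^\infty}\leq 1$) does not yield precompactness in $L^1(\Omega)$. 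This is repairable --- pick an interior time $s<\tau$ at which $w_n(s)\to w(s)$ in $L^1(\Omega)$ along a subsequence and push forward with the $L^1$-contraction \eqref{eq:L1C} --- but the repair must be made explicit.

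Second, and more seriously, the uniform bounds that your window argument needs are not actually delivered by your energy/Gronwall step. Testing with $\phi(u)$ gives $\frac{d}{dt}\int_\Omega\Phi(u)\leq -c\int_\Omega\Phi(u)+C$, hence $\int_\Omega\Phi(u(t))\leq e^{-ct}\int_\Omega\Phi(u_0)+C/c$; since $\Phi(z)=\int_0^z\phi$ and \eqref{eq:adphi} only bounds $|\phi(z)|$ by $c_1(1-|z|)^{1-a}+c_2$ with $a>1$, the energy $\int_\Omega\Phi(u_0)$ is \emph{not} uniformly bounded over $\mathcal{X}$ when $a\geq 2$ (it can even be infinite for elements of the $L^1$-closure), so the entry time into your ``refined absorbing set'' is not uniform and the starting data $w_n(0)$ of your windows carry no uniform bound on $\phi(w_n)$ in $L^2(0,\tau;V)$. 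The paper closes exactly this gap with Lemma~\ref{pro:H1est}: testing the (non-degenerate approximate) equation with $|\phi_R(u_R)|^{\delta-1}\phi_R(u_R)$ for $\delta=1/a$ --- this is precisely where \eqref{eq:adphi} is used, since $\int_0^{|z|}|\phi(v)|^{1/a}dv$ is bounded uniformly in $z\in I$ --- and then iterating with $t^N$-weights to obtain the instantaneous smoothing estimate $\|\phi(u(t))\|^2_{H^1(\Omega)}\leq C(t^\kappa+1)/t^{\kappa}$ with $C$ independent of the initial data. Once this is in hand, the paper's compactness proof is also more direct than your Minty--Browder window argument: the set $K(R)=\{z\in\mathcal{X}:\|\phi(z)\|^2_{H^1(\Omega)}\leq R\}$ is relatively compact in $L^1(\Omega)$ (compact embedding applied to $\phi(z)$, then continuity of $\phi^{-1}$ and dominated convergence), $\mathcal{S}(\xi)\mathcal{X}\subset K(R)$ for a fixed $\xi>0$, and $\mathcal{S}(T)\mathcal{X}\subset\bigcup_{t\in[0,T]}\mathcal{S}(t)K(R)$ is the continuous image of a relatively compact set. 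You should either prove the uniform smoothing estimate or cite it; without it the asymptotic compactness over all of $\mathcal{X}$ does not follow.
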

In this theorem, 
\begin{align*} 	\omega(D)=\bigcap_{\tau\geq 0}\overline{\bigcup_{t\geq \tau}S(t)D}
\end{align*}denotes the $\omega$-limit set of a set $D$ for a semigroup $\{S(t)\}_{t\geq 0}$ in a Banach space $E$, where the bar denotes the closure in $E$.

Analogously, we define the phase space for the coupled problem 

\begin{equation*}
	\mathcal{Y}:=\overline{\{(u,v)\in L^{1}(\Omega)\times L^{1}(\Omega):\|u\|_{L^\infty(\Omega)}<1, \phi(u)\in V,u\geq 0, v\in [0,1] \text{ measurable}\}},
\end{equation*}
where the bar denotes the closure taken in $L^1(\Omega)\times L^{1}(\Omega)$. 
Then, using Theorem~\ref{WPC} we will show that the coupled problem \eqref{eq:CP} generates a  semigroup $\{\mathcal{T}(t)\}_{t\geq 0}$ in $\mathcal{Y}$ that is Lipschitz continuous in $L^{1}(\Omega)\times L^{1}(\Omega)$. More specifically,  $\mathcal{T}(t)(u_{0},v_{0})=(u(t),v(t))$, where $(u,v)\in C([0,T];L^{1}(\Omega))\times C([0,T];L^{1}(\Omega))$ is the unique weak solution
of \eqref{eq:CP} with initial data $(u_{0},v_{0}) \in \mathcal{Y}$.
Note that $\mathcal{Y}$ is  positively invariant and bounded in $L^{1}(\Omega)\times L^{1}(\Omega)$. 

We now state our result on the existence of the global attractor for the problem \eqref{eq:CP}. As for the scalar problem, we additionally assume that $\phi:[0,1)\to [0,\infty)$ satisfies the inequality in \eqref{eq:adphi}.

\begin{theorem}\label{GAC} 
	Assume that  $\phi:[0,1)\to [0,\infty)$ satisfies  \eqref{eq:P1}--\eqref{eq:P3} and \eqref{eq:adphi}, and $f, g$ satisfy  \eqref{eq:FG1}-\eqref{eq:FG2}. 
	Then, the semigroup $\{\mathcal{T}(t)\}_{t\geq 0}$ generated by problem \eqref{eq:CP} has a global attractor $\Tilde{\mathcal{A}}$ in $\mathcal{Y}$ endowed with the $L^{1}(\Omega)\times L^{1}(\Omega)$-topology, which has the form $\Tilde{\mathcal{A}}=\omega(\mathcal{Y})$.
\end{theorem}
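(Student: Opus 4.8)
The plan is to apply the standard existence theorem for global attractors (see, e.g., \cite{Temam,Hale}): a continuous semigroup on a complete metric space that possesses a bounded absorbing set and is asymptotically compact admits a global attractor, which moreover coincides with the $\omega$-limit set of the absorbing set. The continuity of $\{\mathcal{T}(t)\}_{t\geq 0}$ on $\mathcal{Y}$ with respect to the $L^{1}(\Omega)\times L^{1}(\Omega)$-topology is already guaranteed by the $L^{1}$-contraction inherited from Theorem~\ref{WPC}. Since the conditions \eqref{eq:FG2} force $u(t)\in[0,1)$ and $v(t)\in[0,1]$ pointwise, the phase space $\mathcal{Y}$ is bounded in $L^{1}(\Omega)\times L^{1}(\Omega)$ and positively invariant, so it serves itself as a bounded absorbing set. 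Consequently the whole argument reduces to establishing the asymptotic compactness of $\{\mathcal{T}(t)\}_{t\geq 0}$, i.e.\ that for every sequence $t_{k}\to\infty$ and every sequence of initial data in $\mathcal{Y}$ the images $\mathcal{T}(t_{k})(u_{0,k},v_{0,k})$ are relatively compact in $L^{1}(\Omega)\times L^{1}(\Omega)$.

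Because the two components enjoy very different smoothing properties, I would treat them separately. For the nutrient component $v$ the equation $v_{t}=\Delta v+g(\cdot,u,v)$ is a semilinear heat equation whose forcing $g(\cdot,u,v)$ is uniformly bounded by \eqref{eq:FG1}--\eqref{eq:FG2} and whose boundary condition is homogeneous Dirichlet. Standard parabolic smoothing then yields a bound on $v(t)$ in $V=H^{1}_{0}(\Omega)$ that is uniform with respect to the initial data for $t\geq\tau>0$; the compact embedding $H^{1}_{0}(\Omega)\hookrightarrow\hookrightarrow L^{1}(\Omega)$ immediately gives the relative compactness of $\{v(t):t\geq\tau\}$ in $L^{1}(\Omega)$, hence the asymptotic compactness of the $v$-component.

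For the degenerate biomass component $u$ I would reproduce the asymptotic compactness argument established for the scalar Problem \eqref{eq:SP} in the proof of Theorem~\ref{GAS}, which is built on the ideas of Feireisl et al.\ \cite{Feireisl}. The key point is that the reaction term $f(\cdot,u,v)$ enters these estimates only through its $L^{\infty}$-norm, which is controlled uniformly by the boundedness in \eqref{eq:FG1}--\eqref{eq:FG2}; the precise dependence on $v$ is irrelevant for the energy and smoothing bounds. Thus the time-uniform energy estimate (the analogue of \eqref{eq:REE}) together with the growth condition \eqref{eq:adphi} controls $\Phi(u)$ in $L^{\infty}(0,\infty;L^{1}(\Omega))$ and $\nabla\phi(u)$ in $L^{2}$, and passing these bounds through the non-degenerate approximations yields enough equi-integrability and regularity of $\phi(u)$ to extract, along a subsequence, the a.e.\ convergence of $\phi(u_{k}(t_{k}))$. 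Since $\phi$ is a homeomorphism of $I$ by \eqref{eq:P1}--\eqref{eq:P2} and $0\leq u_{k}<1$, this transfers via $\phi^{-1}$ and dominated convergence to convergence of $u_{k}(t_{k})$ in $L^{1}(\Omega)$, giving the asymptotic compactness of the $u$-component.

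Combining the two components yields the asymptotic compactness of $\{\mathcal{T}(t)\}_{t\geq 0}$ on $\mathcal{Y}$, and the abstract theorem produces the global attractor $\tilde{\mathcal{A}}=\omega(\mathcal{Y})$. I expect the main obstacle to be precisely the asymptotic compactness of the degenerate component $u$: near the degeneracy $\phi'(0)=0$ the diffusion loses its regularizing effect, so no instantaneous smoothing into a compact set is available and one must instead rely on the time-uniform energy dissipation together with the tail and growth control provided by \eqref{eq:adphi}. The coupling itself is comparatively harmless, since the boundedness of $f$ and $g$ decouples the a priori estimates; the only care needed is to carry out the two compactness arguments along a common subsequence, so that the limits of $u_{k}(t_{k})$ and $v_{k}(t_{k})$ are extracted simultaneously.
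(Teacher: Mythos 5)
Your overall route coincides with the paper's: reduce everything to asymptotic compactness (the continuity of $\{\mathcal{T}(t)\}_{t\geq 0}$ and the fact that $\mathcal{Y}$ is itself a bounded, positively invariant absorbing set follow from Theorem~\ref{WPC}), handle the semilinear component $v$ by standard parabolic smoothing into $H^1_0(\Omega)$, and handle the degenerate component $u$ by transplanting the scalar argument of Theorem~\ref{GAS}; the paper likewise notes that the coupling is harmless because $f$ and $g$ enter the estimates only through their uniform bounds. One point in your description needs correction, however. The mechanism you name for the compactness of the $u$-component --- the energy estimate (the analogue of \eqref{eq:REE}), which controls $\Phi(u)$ in $L^{\infty}(0,\infty;L^{1}(\Omega))$ and $\nabla\phi(u)$ in $L^{2}(\Omega_T)$, combined with equi-integrability --- is not sufficient: a space-time $L^{2}$ bound on $\nabla\phi(u)$ gives no control of the time slice $\phi(u(t))$ at the specific times $t_k\to\infty$, so it cannot by itself yield relative compactness of $\{u_k(t_k)\}$ in $L^{1}(\Omega)$. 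What the paper's proof actually rests on is the pointwise-in-time smoothing estimate \eqref{eq:fphi2}, $\lVert\phi(u(t))\rVert_{H^{1}(\Omega)}^{2}\leq C(t^{\kappa}+1)/t^{\kappa}$ with $C,\kappa$ independent of the initial data, which is derived for the non-degenerate approximations in Lemmas~\ref{pro:H1bounds} and~\ref{pro:H1est} (this is where \eqref{eq:adphi} enters) and transferred to the limit in Theorems~\ref{thm:existence} and~\ref{thm:limit}. This estimate places $\mathcal{T}(T)\mathcal{Y}$, for any fixed $T>0$, inside a set of the form $K(R)=\{(z_1,z_2)\in\mathcal{Y}:\lVert\phi(z_1)\rVert_{H^{1}(\Omega)}+\lVert z_2\rVert_{H^{1}(\Omega)}\leq R\}$, whose relative compactness in $L^{1}(\Omega)\times L^{1}(\Omega)$ follows from the compact embedding of $H^{1}(\Omega)$ into $L^{1}(\Omega)$ together with the continuity of $\phi^{-1}$ and dominated convergence. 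Once this is in place, your concern about extracting a common subsequence for the two components evaporates: the entire image $\bigcup_{t\geq T}\mathcal{T}(t)\mathcal{Y}$ sits inside a single relatively compact set, so no diagonal argument is required.
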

\begin{remark}
	Theorem~\ref{GAC} generalizes the existence result in \cite{Efendiev2009} for the global attractor for the biofilm model \eqref{eq:BM}. In fact, we consider the significantly broader class of systems \eqref{eq:CP}.
\end{remark}


Next, we recall the definition of an exponential attractor. 
\begin{definition}
	A set $\mathcal{M}\subset E$ is  an exponential attractor for the semigroup $\{S(t)\}_{t\geq 0}$ in a Banach space $E$ iff
	\begin{itemize}
		\item[ \textbf{i)}] $\mathcal{M}$ is a compact subset of $E$,
		\item[\textbf{ii)}] $\mathcal{M}$ is positively invariant, that is, $S(t)\mathcal{M}\subset \mathcal{M},\ \forall\ t\geq 0 $,
		\item[\textbf{iii)}] $\mathcal{M}$ exponentially attracts all bounded subsets, that is, for any bounded subset $D\subset E$ \begin{equation*}
			d_{E}(S(t)D,\mathcal{M})\leq Q(\lVert D\rVert_{E})e^{-\alpha t},
		\end{equation*}
		where the positive constant $\alpha$ and the monotonic function $Q$ are independent of $D$,\item[\textbf{iv)}] $\mathcal{M}$ has finite fractal dimension in $E$, i.e.,
		\begin{equation*}
			\text{dim}_{f}^E(\mathcal{M})\leq C<\infty,
		\end{equation*}for some  constant $C>0$. 
	\end{itemize}
\end{definition}
The fractal dimension of a (pre)compact set $K$ in a metric space $X$ is defined as
\begin{equation*}        \text{dim}_{f}^X(K):=\limsup\limits_{\varepsilon\to 0}\frac{\ln(N_{\epsilon}^X(K))}{\ln(1/\varepsilon)},
\end{equation*}
where $N_{\epsilon}^X(K)$ is the minimal number of balls in $X$ with radius $\varepsilon$ and center in $K$ needed to cover $K$.    

To prove the existence of an exponential attractor for the scalar problem \eqref{eq:SP}, we additionally suppose that the function $\phi:I\to\mathbb{R}$ and the partial derivative $f_{u}$ of the function $f:\Omega\times I\to \mathbb{R}$ satisfy
\begin{equation}
	\phi\in C^{2}(I),\qquad  f_{u}\ \text{is continuous with} \ f_{u}(\cdot,0)<0 \ \text{in}\ \Omega.\label{eq:signcondf}
\end{equation}
We assume that the solutions are globally Hölder continuous with exponent $\alpha>0$, i.e. for any solution $u$ of the problem \eqref{eq:SP} and compact interval $\widetilde I\subset(0,\infty)$ there exists a positive constant $M$ such that 
\begin{align}
	\lVert u\rVert_{C^{\alpha}(\widetilde I\times \overline{\Omega})}\leq M. \label{eq:HC}
\end{align}

\begin{remark}\label{HC}
	The interior Hölder continuity of local weak solutions of \eqref{eq:SP} that are bounded away from $1$ was shown in \cite{Muller} under the assumption that
	\begin{equation*}
		C_{1}|z|^{p-1}\leq \phi'(z)\leq C_{2}|z|^{p-1}\qquad \forall z \in[0,\varepsilon],
	\end{equation*}
	for some constants $C_{1},C_{2}>0$, $\varepsilon\in(0,1)$ and $p>1$ 
	(see \cite[Theorem 1.2]{Muller}). Note that weak solutions $u$ are bounded away from $1$ if the initial data $u_{0}$ has this property. It is expected that these solutions satisfy the global Hölder continuity assumption \eqref{eq:HC}, i.e. they are H\"older continuous up to the boundary. In fact, as pointed out by the author in \cite[Section 1.7]{MullerThesis}, the proof should extend by using the methods in \cite[Chapter III, \S11 and \S12]{DiBenedetto} if the boundary of the domain, the boundary data and initial data are Hölder continuous.    
\end{remark}

Now, we state the main result on the existence of an exponential attractor for the scalar problem.
\begin{theorem}\label{EAS}
	Under the assumptions of Theorem~\ref{GAS} and \eqref{eq:signcondf}-\eqref{eq:HC}, there exists an exponential attractor $\mathcal{M}$ in $\mathcal{X}$ endowed with the topology of $L^{1}(\Omega)$, whose fractal dimension is bounded by\begin{equation}
		\text{dim}_{f}^{L^{1}(\Omega)}(\mathcal{M})\leq \frac{1}{\alpha} +\log_{1/\eta_{1}}\left(N_{\eta_{2}}^W\left(B_{1}^{V}(0)\right)\right), \label{eq:upperboundexp1}
	\end{equation}
	where $W=C(J\times \overline{\Omega}_1)$ and $V=C^\alpha(J\times \overline{\Omega}_2)$ for open subsets $\Omega_{1}$ and $\Omega_{2}$ such that $\Omega_{1}\subset\Omega_{2}\subset\Omega$ and a compact time interval $J\subset\mathbb{R}$. The constants $\eta_{1},\eta_{2}\in(0,1)$ are determined by smoothing and decay estimates of the semigroup and $\alpha>0$ by \eqref{eq:HC}.   
	
	As a consequence, the global attractor $\mathcal{A}\subset \mathcal{M}$ and has finite fractal dimension.
\end{theorem}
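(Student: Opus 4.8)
The plan is to obtain $\mathcal{M}$ from the abstract construction of exponential attractors for semigroups that lack a smoothing property in the phase space, namely \cite[Remark 3.2]{EfendievZelik2008} together with \cite{CzajaEfendiev}. In that framework the usual phase-space smoothing is replaced by a smoothing estimate on \emph{pieces of trajectories}, measured in an auxiliary pair of Banach spaces with compact embedding. Accordingly, I would first reduce the problem to the discrete map $\mathcal{S}(t^*)$ for a suitable $t^*>0$ (with $J\supset[0,t^*]$) and then verify, on a compact positively invariant absorbing set $\mathcal{B}\subset\mathcal{X}$, the following ingredients: the spaces $V=C^{\alpha}(J\times\overline{\Omega}_2)$ and $W=C(J\times\overline{\Omega}_1)$ from the statement, with the compact embedding $V\hookrightarrow\hookrightarrow W$ of Arzel\`a--Ascoli type; a trajectory map $\mathbb{T}u_0:=u|_{J\times\overline{\Omega}_2}$ sending initial data to the associated solution restricted to an interior space--time box; a Lipschitz-type estimate for $\mathbb{T}$ from $(\mathcal{B},\|\cdot\|_{L^1})$ into $V$; and a discrete smoothing--contraction estimate in $L^1$ with remainder measured in $W$.

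The compact absorbing set is produced by combining Theorem~\ref{GAS} (which already yields a bounded absorbing set and the global attractor $\mathcal{A}=\omega(\mathcal{X})$) with the global H\"older bound \eqref{eq:HC}: after a finite time every trajectory enters a set that is bounded in $C^{\alpha}$, hence precompact in $C(\overline{\Omega})$ and a fortiori in $L^{1}(\Omega)$; taking its closed positively invariant hull gives $\mathcal{B}$. The same H\"older bound shows that $\mathbb{T}(\mathcal{B})$ lies in a fixed ball of $V$, which via $V\hookrightarrow\hookrightarrow W$ is the source of the covering number $N_{\eta_2}^{W}(B_1^{V}(0))$ in \eqref{eq:upperboundexp1}.

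The two quantitative estimates are the heart of the argument, and both exploit the two-regime structure of the equation. Away from the degeneracy the equation is uniformly parabolic; since $\phi(u)=0$ on $\partial\Omega$ forces $u=0$ there, the H\"older bound \eqref{eq:HC} confines the non-degenerate set $\{u\ge\delta\}$ (for a fixed small $\delta$) to a fixed interior subdomain $\Omega_1\subset\Omega_2\subset\Omega$, uniformly over $\mathcal{B}$; interior parabolic (De Giorgi--Nash--Moser/Schauder) estimates there, applied to the difference of two solutions, furnish the Lipschitz bound of $\mathbb{T}$ into $V$. For the smoothing--contraction estimate I would use the $L^{1}$-accretivity computation underlying \eqref{eq:L1C}: writing $w=u-\tilde{u}$ and testing with $\mathrm{sgn}(w)=\mathrm{sgn}(\phi(u)-\phi(\tilde{u}))$, the diffusion term is nonpositive by monotonicity of $\phi$, so with $f(u)-f(\tilde{u})=f_u(\xi)\,w$ (using $f\in C^{1}$ from \eqref{eq:signcondf}),
\begin{equation*}
\frac{d}{dt}\|w(t)\|_{L^{1}(\Omega)}\le\int_{\Omega} f_u(\xi)\,|w|\,dx\le -c\int_{\{|\xi|<\delta\}}|w|\,dx+L\int_{\{|\xi|\ge\delta\}}|w|\,dx,
\end{equation*}
where $c>0$ comes from the sign condition $f_u(\cdot,0)<0$ on the neighbourhood $\{|\xi|<\delta\}$ of the degeneracy. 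Rewriting the right-hand side as $-c\|w\|_{L^{1}(\Omega)}+(L+c)\int_{\{|\xi|\ge\delta\}}|w|$, bounding the last integral by $|\Omega|\,\|\mathbb{T}u_0-\mathbb{T}\tilde{u}_0\|_{W}$ (since $\{|\xi|\ge\delta\}\subset\Omega_1$ uniformly), and applying a Gr\"onwall argument over $[0,t^*]$ yields
\begin{equation*}
\|\mathcal{S}(t^*)u_0-\mathcal{S}(t^*)\tilde{u}_0\|_{L^{1}(\Omega)}\le\eta_1\|u_0-\tilde{u}_0\|_{L^{1}(\Omega)}+C\|\mathbb{T}u_0-\mathbb{T}\tilde{u}_0\|_{W},\qquad \eta_1=e^{-ct^*}\in(0,1).
\end{equation*}
All of these computations would be carried out on the smooth non-degenerate approximations used throughout the paper and then passed to the limit.

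With these ingredients the abstract theorem provides a discrete exponential attractor $\mathcal{M}_d$ for $\mathcal{S}(t^*)$ on $\mathcal{B}$ with $\mathrm{dim}_f^{L^{1}}(\mathcal{M}_d)\le\log_{1/\eta_1}\!\big(N_{\eta_2}^{W}(B_1^{V}(0))\big)$. To pass to the continuous semigroup I would set $\mathcal{M}=\bigcup_{t\in[0,t^*]}\mathcal{S}(t)\mathcal{M}_d$; the $\alpha$-H\"older continuity in time (again from \eqref{eq:HC}) together with the Lipschitz continuity in the data adds at most $1/\alpha$ to the fractal dimension, giving exactly the bound \eqref{eq:upperboundexp1}, and the inclusion $\mathcal{A}\subset\mathcal{M}$ yields the finite fractal dimension of the global attractor. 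The main obstacle is the Lipschitz/smoothing estimate for trajectory differences in the interior: because the free boundary $\{u=0\}$ moves and may cross any fixed interior subdomain, one cannot assume uniform parabolicity on all of $\Omega_2$, and the estimate must be reconciled with the degeneracy by carefully separating the contracting degenerate region (controlled in $L^{1}$ by the sign condition) from the genuinely parabolic region (where interior regularity applies); making this decomposition uniform over $\mathcal{B}$ and stable under the non-degenerate approximation is the delicate technical core.
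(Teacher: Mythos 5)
Your overall strategy is the right one --- the abstract trajectory-space existence result of Czaja--Efendiev/Efendiev--Zelik, a compact positively invariant set built from the H\"older bound \eqref{eq:HC}, a contraction in $L^1$ driven by the sign condition near the degeneracy, and interior parabolic regularity for differences of solutions away from it --- and this is indeed the skeleton of the paper's proof. However, there is a genuine gap at exactly the point you yourself flag as ``the delicate technical core'', and it is not merely technical: the decomposition into a degenerate and a non-degenerate region \emph{cannot} be made with subdomains $\Omega_1\subset\Omega_2$ fixed uniformly over the absorbing set. The H\"older bound does give that $\{|u|\ge\delta\}$ stays a fixed distance from $\partial\Omega$, but the converse inclusion fails: the zero set of $u$ can cut through any fixed interior subdomain, so the equation need not be uniformly parabolic on $\Omega_1$, the interior Schauder/De~Giorgi--Nash--Moser estimate for your trajectory map $\mathbb{T}$ is unavailable there, and the remainder term $\int_{\{|\xi|\ge\delta\}}|w|$ in your Gr\"onwall computation cannot be bounded by $\|\mathbb{T}u_0-\mathbb{T}\tilde u_0\|_{W}$ with $W=C(J\times\overline{\Omega}_1)$. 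Since both quantitative estimates hinge on this, the argument as written does not close.

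The paper's resolution --- and the reason Proposition~\ref{EfendievZelik2008} is stated with spaces $X_{u_*,\varepsilon}$, $Y_{u_*,\varepsilon}$ that are allowed to depend on the base point $u_*$ --- is to adapt the decomposition to the reference trajectory: for each $u_0$ in the compact set $\mathcal{C}$ one takes $L(\theta,u_0)=\{|u_0|>\theta\}$ and $S(\theta,u_0)=\{|u_0|<\theta\}$ as in \eqref{eq:LS}, and Lemma~\ref{sign} shows, using \eqref{eq:HC}, the interpolation inequality \eqref{eq:interpol} and the $L^1$-contraction \eqref{eq:L1C}, that \emph{every} solution starting $\varepsilon$-close to $u_0$ in $L^1$ remains bounded away from the degeneracy on $L(\theta,u_0)$ and satisfies $f_u<-\beta$ on $S(4\theta,u_0)$, for a short but uniform time $T$. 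Only then do the interior regularity estimate (Lemma~\ref{smoothinglemma}, on $L(3\theta,u_0)$) and the cut-off contraction estimate (Lemma~\ref{contractionlemma}, with a cut-off $\psi$ equal to $1$ on $S(4\theta,u_0)$, vanishing on $L(5\theta,u_0)$, and with derivatives uniformly bounded thanks to \eqref{eq:posdistsets}) become available with constants independent of $u_0$; the auxiliary spaces are then $C^{\alpha}([t_*,T]\times\overline{L(3\theta,u_0)})$ and $C([t_*,T]\times\overline{L(7\theta/2,u_0)})$, and the uniformity of the covering numbers over $u_0$ is what places $N^{W}_{\eta_2}(B_1^{V}(0))$ into \eqref{eq:upperboundexp1}. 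A secondary remark: the additive $1/\alpha$ in your final dimension bound does not come for free from the standard union-over-$[0,t^*]$ construction, which only yields $\frac{1}{\alpha}(1+\mathrm{dim}_f\,\mathcal{M}^d)$; the paper obtains the additive form from the modified continuous-time construction of \cite{SonnerPhD,CzajaSonner}.
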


Here, the set $B_{r}^{E}(x_{0})$ denotes a ball in a metric space $E$ of radius $r>0$ with center $x_{0}\in E$.

\begin{remark}
	The (Kolmogorov) $\varepsilon$-entropy of a precompact subset $M$ of a Banach space $E$
	is defined as 
	$$
	\mathcal H_\varepsilon^E(M)=\log_2(N_\varepsilon^E(M)).
	$$
	It was introduced by Kolmogorov and Tihomirov in \cite{Kolmogorov}.
	The order of growth of $\mathcal H_\varepsilon^E(M)$ as $\varepsilon$ tends to zero is a measure for the massiveness of the set $M$ in $E$.
	For certain function spaces, precise estimates are known. For instance, if $\Omega=(0,1)^n$ and $\alpha\in(0,1)$ then 
	$$
	c_1\varepsilon^
	{-\frac{n}{\alpha}} \leq\mathcal  H_\varepsilon^{C(\overline{\Omega})}\left(B^{C^\alpha(\overline{\Omega})}_{1}(0)\right)\leq c_2\varepsilon^
	{-\frac{n}{\alpha}},
	$$
	for some constants $c_1,c_2>0,$ see \cite{Kolmogorov}. Such results have been generalized, e.g. for compact, connected subsets of $\mathbb{R}^n$. However, we cannot apply these estimates in Theorem \ref{EAS} as the subsets $\Omega_1$ and $\Omega_2$ are not necessarily connected.    
\end{remark}

\begin{remark}
	To prove the existence of an exponential attractor for continuous time semigroups $\{S(t)\}_{t\geq 0}$ one first constructs an exponential attractor $\mathcal{M}^d$ for the discrete time semigroup $\{S(nT)\}_{n\in\mathbb{N}_0}$, for some $T>0$. If the semigroup is H\"older continuous in time with exponent $\alpha\in(0,1]$ an exponential attractor for the continuous time semigroup is then typically obtained by 
	$$
	\mathcal{M}=\bigcup_{t\in[T,T+1]}S(t)\mathcal{M}^d,
	$$
	and its fractal dimension is bounded by 
	$$
	\text{dim}_{f}(\mathcal{M})\leq \frac{1}{\alpha}\left(1+\text{dim}_{f}(\mathcal{M}^d)\right),
	$$
	see \cite{CzajaEfendiev, Eden, EfendievZelik2008}. In the proof of Theorem \ref{EAS}, we use the slightly different construction of the continuous time exponential attractor in \cite{SonnerPhD}, see also \cite{CzajaSonner}, and obtain the improved bound
	$$
	\text{dim}_{f}(\mathcal{M})\leq \frac{1}{\alpha}+\text{dim}_{f}(\mathcal{M}^d).
	$$
\end{remark}

Finally, we extend the existence result for exponential attractors to the coupled system \eqref{eq:CP} under the assumptions that the function $\phi:[0,1)\to[0,\infty)$ and the partial derivative $f_{u}$ of the function  $f:\Omega\times [0,1)\times [0,1]\to \mathbb{R}$ satisfy 
\begin{align} 
	\phi\in C^{2}([0,1)),\quad \ f_{u}\ \text{is continuous with}\ f_{u}(\cdot,0,v)<0 \ \text{in}\ \Omega,\ \forall v\in [0,1].\label{eq:signcondf2}      
\end{align}
We assume that the solutions are globally Hölder continuous with exponent $\Tilde{\alpha}>0$, i.e. for any solution $(u,v)$ of $\eqref{eq:CP}$ and compact interval $\widetilde I\subset(0,\infty)$ there exists a positive constant $\Tilde{M}$ such that 
\begin{align}
	||(u,v)||_{C^{\Tilde{\alpha}}(\widetilde I\times \overline{\Omega})\times C^{\Tilde{\alpha}}(\widetilde I\times \overline{\Omega})}\leq \Tilde{M}.\label{eq:HC2}
\end{align}

\begin{remark}
	The interior Hölder regularity for the scalar problem \eqref{eq:SP} shown in \cite{Muller} extends to the coupled problem \eqref{eq:CP}. Indeed, the first equation is of the form \eqref{eq:SP} and hence, solutions are Hölder continuous. On the other hand, the second equation of \eqref{eq:CP} is a non-degenerate semilinear parabolic equation for which classical results on interior Hölder continuity are applicable (see e.g., \cite{Ladyzhenskaya}). Therefore, as explained in Remark~\ref{HC}, it is expected that the global Hölder continuity also holds for the coupled problem \eqref{eq:CP}.
\end{remark}

\begin{theorem}\label{EAC}
	Under the assumptions of Theorem~\ref{GAC} and \eqref{eq:signcondf2}-\eqref{eq:HC2}, there exists an exponential attractor $\mathcal{M}$ in $\mathcal{Y}$ endowed with the topology of $L^{1}(\Omega)\times L^{1}(\Omega)$, whose fractal dimension is bounded by\begin{align}
		\text{dim}_{f}^{L^{1}(\Omega)\times L^{1}(\Omega)}(\mathcal{M})\leq \frac{1}{\Tilde{\alpha}}+\log_{1/\chi_{1}}\left(N^W_{\chi_{2}}\left(B_{1}^{V}(0,0)\right)\right),\label{eq:uppervoundexp2}
	\end{align} 
	where $W=C(J\times \overline{\Omega}_{1})\times C(J\times \overline{\Omega})$ and  
	$V=C^{\Tilde{\alpha}}(J\times \overline{\Omega}_{2})\times C^{\Tilde{\alpha}}(J\times \overline{\Omega})$ for open subsets $\Omega_{1}, \Omega_{2}$ such that $\Omega_{1}\subset\Omega_{2}\subset\Omega$ and a compact time interval $J\subset\mathbb{R}$. 
	The constants $\chi_{1},\chi_{2}\in(0,1)$ are determined by smoothing and decay estimates of the semigroup and $\alpha>0$ by \eqref{eq:HC2}.
	
	As a consequence, the global attractor $\Tilde{\mathcal{A}}\subset \mathcal{M}$ and  has finite fractal dimension.
\end{theorem}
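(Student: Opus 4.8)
The plan is to derive Theorem~\ref{EAC} from the scalar result Theorem~\ref{EAS} by reducing the coupled system to an abstract framework to which the same construction applies. First I would establish the analogue of the ingredients used in the scalar case: a bounded absorbing set for the semigroup $\{\mathcal{T}(t)\}_{t\geq 0}$ in $\mathcal{Y}$ (which follows from the energy inequality and $L^1$-contraction of Theorem~\ref{WPC}, together with \eqref{eq:adphi} as in Theorem~\ref{GAC}), and the existence of a compact, positively invariant absorbing set $\mathcal{B}\subset\mathcal{Y}$ obtained as $\mathcal{B}=\omega(\mathcal{Y})$-neighbourhood or as $\mathcal{T}(t_0)$ applied to the absorbing ball. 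The sign condition \eqref{eq:signcondf2} on $f_u$ near the degeneracy plays exactly the role it did in the scalar case, ensuring a decay/contraction estimate in $L^1(\Omega)\times L^1(\Omega)$ on the trajectories entering $\mathcal{B}$, while \eqref{eq:HC2} provides the smoothing into the Hölder space. The key observation is that the second equation of \eqref{eq:CP} is a non-degenerate semilinear reaction–diffusion equation with Lipschitz reaction \eqref{eq:FG1}, so the $v$-component enjoys standard parabolic smoothing and the only genuine difficulty is again the degenerate $u$-equation.

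The heart of the argument is to verify the hypotheses of the abstract exponential-attractor theorem (cf. \cite[Remark 3.2]{EfendievZelik2008}, \cite{CzajaEfendiev}, and the continuous-time construction in \cite{SonnerPhD, CzajaSonner}) in the form used for Theorem~\ref{EAS}. Concretely, I would fix $T>0$ and work with the discrete semigroup $\{\mathcal{T}(nT)\}_{n\in\mathbb{N}_0}$ on the compact positively invariant set $\mathcal{B}$. For a pair of trajectories starting in $\mathcal{B}$, I would map a trajectory segment on a compact time interval $J$ into the space $W=C(J\times\overline{\Omega}_1)\times C(J\times\overline{\Omega})$ and, using the global Hölder bound \eqref{eq:HC2}, into the smoother space $V=C^{\tilde\alpha}(J\times\overline{\Omega}_2)\times C^{\tilde\alpha}(J\times\overline{\Omega})$, with $\Omega_1\subset\Omega_2\subset\Omega$. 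One then needs the two structural estimates: a \emph{smoothing estimate} asserting that the difference of two trajectory segments, measured in $V$, is controlled by their initial $L^1\times L^1$ distance (this is where the Hölder regularity and the interior estimates on $\Omega_1\subset\Omega_2$ enter), and a \emph{decay/contraction estimate} asserting that after time $T$ the $L^1\times L^1$ distance contracts by a factor $\chi_1\in(0,1)$ up to the $W$-seminorm of the segment (this is where the sign condition \eqref{eq:signcondf2} near the degeneracy and the Lipschitz structure \eqref{eq:FG1} of both reactions are used). Combining these, the abstract theorem yields an exponential attractor $\mathcal{M}^d$ for the discrete semigroup whose fractal dimension is bounded by $\log_{1/\chi_1}\!\bigl(N^W_{\chi_2}(B_1^V(0,0))\bigr)$.

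Passing from discrete to continuous time, I would invoke the Hölder-in-time continuity of $t\mapsto\mathcal{T}(t)$ with exponent $\tilde\alpha$ (again furnished by \eqref{eq:HC2}) and use the refined construction $\mathcal{M}=\bigcup_{t\in[T,T+1]}\mathcal{T}(t)\mathcal{M}^d$ from \cite{SonnerPhD}, which gives the improved additive bound $\dim_f(\mathcal{M})\leq\frac{1}{\tilde\alpha}+\dim_f(\mathcal{M}^d)$ rather than the multiplicative one, producing exactly \eqref{eq:uppervoundexp2}. Compactness and positive invariance of $\mathcal{M}$, together with exponential attraction of bounded sets (first for $\mathcal{B}$, then for arbitrary bounded $D\subset\mathcal{Y}$ via the absorbing property), complete the verification that $\mathcal{M}$ is an exponential attractor; the inclusion $\tilde{\mathcal{A}}\subset\mathcal{M}$ and finiteness of $\dim_f(\tilde{\mathcal{A}})$ then follow from the minimality of the global attractor established in Theorem~\ref{GAC}.

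I expect the main obstacle to be the decay/contraction estimate for the $u$-component near the degeneracy $u=0$. Away from the degeneracy the problem is uniformly parabolic and the estimate is routine, but in a neighbourhood of $\{u=0\}$ the diffusion $\phi'(u)$ vanishes and no smoothing is available; this is precisely why the sign condition $f_u(\cdot,0,v)<0$ in \eqref{eq:signcondf2} is imposed, since it forces contraction in the $L^1$-norm in the degenerate region where diffusion cannot help. Making this rigorous for the \emph{coupled} system requires controlling the cross-dependence of $f$ on $v$ in the contraction argument; here the uniform Lipschitz bound \eqref{eq:FG1} and the independent smoothing of the semilinear $v$-equation must be combined carefully so that the $v$-error does not destroy the contraction gained for $u$. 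I would handle this by treating the $v$-equation first—its smoothing yields an $L^1\times L^1$-to-$W$ estimate for $v$—and then absorbing the resulting $v$-contribution into the $u$-estimate using \eqref{eq:FG1}, so that the two estimates close simultaneously as a coupled system on $\mathcal{B}$.
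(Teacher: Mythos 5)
Your proposal is correct and follows essentially the same route as the paper: a compact positively invariant set obtained from the H\"older bound \eqref{eq:HC2}, verification of the pointwise smoothing estimate (into $C^{\tilde\alpha}$ on base-point-dependent sets $L(3\theta,u_0)\subset L(7\theta/2,u_0)$ for the degenerate component, on all of $\Omega$ for the semilinear one) and the contraction estimate driven by the sign condition \eqref{eq:signcondf2} near the degeneracy with the $v$-cross-terms absorbed via \eqref{eq:FG1}, followed by the abstract discrete-time theorem and the continuous-time construction of \cite{SonnerPhD}. The only slight imprecision is that you quote the standard continuous-time formula $\bigcup_{t\in[T,T+1]}\mathcal{T}(t)\mathcal{M}^d$ (which yields the multiplicative bound $\frac{1}{\tilde\alpha}(1+\dim_f\mathcal{M}^d)$) while claiming the additive bound; the paper instead takes the closure of $\bigcup_{t\in[0,T]}\mathcal{T}(t)\widetilde{\mathcal{M}}^d_\nu$ over the countable core of the discrete attractor to obtain $\frac{1}{\tilde\alpha}+\dim_f\mathcal{M}^d$.
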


\section{Existence of Global Attractors}\label{sec:GA}
In this section, we prove the existence of global attractors for the semigroups generated by problems \eqref{eq:SP} and \eqref{eq:CP}, respectively, i.e. we prove Theorems~\ref{GAS} and \ref{GAC}. First, we consider the scalar problem \eqref{eq:SP} and show the existence of the global attractor for the semigroup $\{\mathcal{S}(t)\}_{t\geq 0}$ in the phase space $\mathcal{X}\subset L^{1}(\Omega)$ defined in Section 2. 
This requires uniform bounds for $\phi(u)$, where $u$ is the weak solution in Theorem~\ref{WPS}. To rigorously justify these estimates, we  consider smooth non-degenerate approximations and derive uniform estimates for the approximate solutions exploiting arguments in \cite{MullerSonner, MullerThesis,Efendiev2009}.
Then, by passing to the limit in the approximation parameter, we obtain suitable bounds and smoothing estimates for the weak solution of the original problem and can define the semigroup in $\mathcal{X}$. Since $\mathcal{X}$ is 
positively invariant and bounded, it is certainly a bounded absorbing set for the semigroup. Hence, it suffices to show the asymptotic compactness of the semigroup which we prove using the uniform estimates for the approximate solutions and by adapting arguments in \cite[\S 3.5]{Feireisl} to our case. 
The results are then extended for the coupled problem.

We first assume that the initial data $u_0\in L^1(\Omega)$ satisfies
\begin{align}\label{eq:ass_smooth_u0}
    \|u_0\|_{L^\infty(\Omega)}\leq 1-\eta,\quad \phi(u_0)\in V,
\end{align}
for some $\eta\in(0,1)$, and then generalize the results for initial data $u_0\in\mathcal{X}$.
We consider the following non-degenerate auxiliary problem
\begin{equation}
		\begin{cases}
\partial_{t}u_{R}=\Delta\phi_{R}(u_{R})+f_R(\cdot,u_{R}),& \text{in}\ (0,T]\times\Omega,\\        
u_{R}=u_{0,R} & \text{in}\ \{0\}\times\Omega,\\
        \phi(u_{R})=0& \text{on}\ (0,T]\times\partial\Omega,
        \end{cases} \label{eq:app}
\end{equation}
where $\phi_R,f_R$ and $u_{0,R}$ are smooth approximations for $\phi, f$ and $u_0$ with the following properties:
\begin{itemize}
    \item[$(A_1)$] 
    The functions $\phi_R\in C^\infty(\mathbb{R})$ are convex on $[0,1)$ and concave on $(-1,0]$, $\phi_R\to\phi$ uniformly on compact subsets of $I$, there exist constants $m_R,M_R>0$ such that 
    $$m_R\leq \phi_R'\leq M_R,
    $$
    and $\phi_R$ is Lipschitz continuous on compact subsets of $I$, where the Lipschitz constant is independent of $R$. Moreover, $|\phi_{R}(z)|\leq |\phi(z)|+\frac{1}{R}|z|$ for any $z\in [-1+1/R,1-1/R]$.
    \item[$(A_2)$] 
    The functions $f_R\in C^\infty(\overline{\Omega}\times I)$ are Lipschitz continuous, 
    $$\sup_{z\in I}\|f_R(\cdot, z)-f(\cdot, z)\|_{L^2(\Omega)}\to 0$$
    and $\|f_R\|_{L^\infty(\Omega\times I)}\leq \|f\|_{L^\infty(\Omega\times I)}.$
    \item[$(A_3)$]
    The functions $u_{0,R}\in C^\infty_c(\Omega)$ are such that $u_{0,R}\to u_0$ in $H^1(\Omega)$ and $\|u_{0,R}\|_{L^\infty(\Omega)}\leq \|u_{0}\|_{L^\infty(\Omega)}\leq 1-\eta.$
\end{itemize}
The smooth approximations for $\phi$ can be obtained by suitably cutting of the singularity and degeneracy and then taking the convolution with a standard mollifier. Similarly, the smooth approximations for $f$ and $u_0$ are constructed.

From now on we denote by $C>0$ a generic constant in the estimates that may vary in each occurrence and from line to line. If a constant depends on a parameter $\varepsilon$ we indicate it by writing $C_\varepsilon$. Again, this constant $C_\varepsilon$  may vary in each occurrence and from line to line.

\begin{proposition}\label{pro:ext_approx}
Let $\Omega$ be a smooth domain, the initial data satisfy \eqref{eq:ass_smooth_u0} and $(A_1)$--$(A_3)$ hold. 
Then, for any $T>0$ there exists a unique classical solution $u_{R}\in C^\infty((0,T]\times\overline{\Omega})$ of 
\eqref{eq:app}.
\\
Moreover, there exists $\delta\in(0,1)$, depending on $\eta$, such that for sufficiently large $R_\eta>0,$ we have
    $$
    -1+\delta\leq u_R\leq 1-\delta\ \text{ in } \Omega_T\qquad \forall R\geq R_\eta.
    $$
\end{proposition}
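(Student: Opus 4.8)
The plan is to split the proof into two essentially independent parts: a standard parabolic regularity argument that produces, for each fixed $R$, a global classical solution of the uniformly parabolic problem \eqref{eq:app}, and a comparison argument that produces the $R$-uniform separation of $u_R$ from the singular values $\pm1$. The singularity of the limit $\phi$ enters only in the second part, which is the crux.

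For the first part I would rewrite \eqref{eq:app} in divergence form as $\partial_t u_R=\nabla\cdot(\phi_R'(u_R)\nabla u_R)+f_R(\cdot,u_R)$. By $(A_1)$ we have $m_R\le\phi_R'\le M_R$ on all of $\mathbb{R}$, so the principal part is uniformly parabolic with smooth coefficients; after extending $f_R$ to a globally bounded Lipschitz function on $\overline\Omega\times\mathbb{R}$ (which does not alter the equation on the range where the solution will be shown to live) the problem becomes a uniformly parabolic quasilinear Dirichlet problem with $C^\infty$ data. Since $\phi(0)=0$ and $\phi$ is strictly increasing, the boundary condition $\phi(u_R)=0$ is equivalent to $u_R=0$ on $\partial\Omega$, and because $u_{0,R}\in C_c^\infty(\Omega)$ vanishes near $\partial\Omega$ the compatibility conditions of all orders are met. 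Classical quasilinear parabolic theory (see, e.g., \cite{Ladyzhenskaya}) then yields a unique global-in-time classical solution, and parabolic Schauder estimates together with a bootstrap in the smoothness of $\phi_R$ and $f_R$ give $u_R\in C^\infty((0,T]\times\overline\Omega)$; uniqueness follows equally from the comparison principle.

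For the uniform bound I would build explicit time-independent barriers. Let $\psi_0$ solve $-\Delta\psi_0=1$ in $\Omega$, $\psi_0=0$ on $\partial\Omega$, so that $0\le\psi_0\le C_\Omega$ with $C_\Omega$ depending only on $\Omega$, set $F:=\|f\|_{L^\infty(\Omega\times I)}$, and for $R$ large put $A_R:=\phi_R(1-\eta/2)$ and define
\[
\Psi:=A_R+F\psi_0,\qquad \bar u:=\phi_R^{-1}(\Psi).
\]
Then $\bar u$ is smooth and, since $-\Delta\Psi=F\ge f_R(\cdot,\bar u)$, it is a supersolution of \eqref{eq:app}; moreover $\Psi\ge A_R$ forces $\bar u\ge 1-\eta/2$, so $\bar u\ge u_{0,R}$ at $t=0$ (because $\|u_{0,R}\|_{L^\infty}\le 1-\eta$ by \eqref{eq:ass_smooth_u0}) and $\bar u\ge 0=u_R$ on $\partial\Omega$. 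The comparison principle then yields $u_R\le\bar u$ in $\Omega_T$. A symmetric subsolution $\underline u:=\phi_R^{-1}(\Xi)$ with $\Xi:=\phi_R(-1+\eta/2)-F\psi_0$ gives $u_R\ge\underline u$ in the same way.

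It remains to turn these barriers into the stated separation, and this is the main obstacle, since for each fixed $R$ the equation is non-singular and nothing intrinsic prevents $u_R$ from approaching $\pm1$; the bound must be extracted from the singularity of the limit profile. Using the explicit estimate in $(A_1)$ I obtain $A_R\le\phi(1-\eta/2)+1$ and $\Psi\le A_R+FC_\Omega$, hence $u_R\le\phi_R^{-1}(K^\ast)$ with $K^\ast:=\phi(1-\eta/2)+1+FC_\Omega$ independent of $R$. Because $\phi$ is surjective by \eqref{eq:P2}, i.e. $\phi(z)\to+\infty$ as $z\to1$, I can fix $z^\ast\in(0,1)$ with $\phi(z^\ast)\ge K^\ast+1$; the uniform convergence $\phi_R\to\phi$ on the compact set $\{z^\ast\}$ from $(A_1)$ gives $\phi_R(z^\ast)\ge K^\ast$ for all $R$ large, whence $\phi_R^{-1}(K^\ast)\le z^\ast=:1-\delta$. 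The analogous lower estimate gives $u_R\ge-1+\delta$. The constant $\delta$ and the threshold $R_\eta$ depend only on $\eta$ (through $K^\ast$ and the rate of convergence $\phi_R\to\phi$ near $z^\ast$); securing this $R$-independence, rather than a bound that degenerates as $R\to\infty$, is the delicate point of the argument and precisely what the later passage to the limit $R\to\infty$ requires.
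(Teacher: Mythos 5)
Your proposal is correct and follows essentially the same strategy as the paper: a citation to standard quasilinear parabolic theory for existence of smooth solutions, followed by a time-independent supersolution of the form $\phi_R^{-1}(\text{elliptic barrier with data }\|f\|_{L^\infty})$ and the uniform convergence $\phi_R\to\phi$ on compact subsets of $I$ combined with the blow-up of $\phi$ at $\pm1$ to extract an $R$-independent $\delta$. The only (harmless) deviation is that by taking the boundary value $\phi_R(1-\eta/2)$ rather than $\phi(1-\eta/2)$ you sidestep the paper's superadditivity-of-$\phi$ step used to verify $\bar u_R\geq 1-\eta$ on the parabolic boundary.
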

\begin{proof}
 The existence and uniqueness of classical solutions of the approximate problems with the stated regularity follows from the theory for non-degenerate quasilinear parabolic equations with smooth coefficients, see, e.g., Theorems 14.4, 14.6 and Corollary 14.7 in \cite{Amann}. 
\\
To derive the bounds for the approximate solutions, we use comparison principles. To this end, we consider the auxiliary problem
\begin{align}\label{eq:ell_aux}
 	\begin{split}
-\Delta w&=c_1\qquad \text{in}\ \Omega,\\
 w&=c_2 \qquad \text{on}\ \partial\Omega,
    \end{split} 
\end{align}
where $c_1=\|f\|_{L^\infty(\Omega\times I)}$ and $c_2=\phi(1-\frac{\eta}{2})$. 
There exists a unique solution $w\in C^\infty(\overline{\Omega})$ of this boundary value problem and the maximum principle implies that $w\geq c_2.$ 
\\
We now set 
$$
\delta=1-\phi^{-1}(\|w\|_{L^\infty(\Omega)}+1)
$$
and show that there exists $R_\delta>0$ such that 
\begin{align}\label{eq:aux_bounds}
    1-\eta\leq \phi_R^{-1}(w)\leq 1-\delta\qquad \forall R\geq R_\delta.
\end{align}
Indeed, $\phi_R$ converges uniformly to $\phi$ on the compact interval $I_\delta=[0,1-\delta]=[0,\phi^{-1}(\|w\|_{L^\infty(\Omega)}+1)]$. Hence, for $R_0>0$ large enough we have 
$\phi_R^{-1}([0,\|w\|_{L^\infty(\Omega)}])\subset I_\delta\subset [0,1)$ for all $R\geq R_0$.
Consequently, $\phi_R^{-1}(w)\leq 1-\delta$ for all $R\geq R_0$. To derive the lower bound in \eqref{eq:aux_bounds}, we recall that  $w\geq c_2=\phi(1-\frac{\eta}{2}).$ Let $R_\eta\geq R_0$ be large enough such that $\|\phi-\phi_R\|_{C(I_\delta)}\leq \phi(\frac{\eta}{2})$ for all $R\geq R_\eta.$ The 
convexity of $\phi$ on $[0,1)$ implies the superadditivity of $\phi$ on $[0,1)$. Therefore, we conclude that  
$$
c_2=\phi\left(1-\frac{\eta}{2}\right)\geq \phi(1-\eta)+\phi\left(\frac{\eta}{2}\right)\geq \phi_R(1-\eta),
$$
and consequently,  $\phi_R^{-1}(w)\geq \phi_R^{-1}(c_2)\geq 1-\eta$ for all $R\geq R_\eta$, which proves \eqref{eq:aux_bounds}.
\\
Finally, we observe that for $\bar u_R=\phi_R^{-1}(w)$ we have
\begin{align*}
    -(\bar u_R)_t+\Delta\phi_R(\bar u_R)&=\Delta w =-c_1=-\|f\|_{L^\infty(\Omega\times I)}\\
    &\leq -f_R(\cdot, u_R)=-(u_R)_t+\Delta\phi_R(u_R),
\end{align*}
and $\bar u_R\geq 1-\eta\geq \|u_{0,R}\|_{L^\infty(\Omega)}\geq 0$ on $((0,T]\times\partial\Omega)\cup (\{0\}\times\Omega).$
Hence, the parabolic comparison principle implies that $u_R\leq \bar u_R\leq 1-\delta.$
\\
This proves the upper bound in Proposition \ref{pro:ext_approx}. The lower bound is obtained analogously, using $c_1=-\|f\|_{L^\infty(\Omega)}$ and 
$c_2=\phi(-1+\frac{\eta}{2})$ in the auxiliary problem \eqref{eq:ell_aux} and the concavity of $\phi$ on $(-1,0]$. 
\end{proof}

The following lemma provides uniform estimates for $\phi_R$.
\begin{lemma}\label{pro:H1bounds}
    Let the assumptions in Proposition \ref{pro:ext_approx} hold and $u_R$ be the approximate solutions. Then, for any $T>0$, the following estimate holds
\begin{align*}
\lVert \phi_{R}(u_{R}(T))\rVert_{H^{1}(\Omega)}^{2}+
\int_{T}^{T+1}\langle|\partial_{t}u_{R}(t)|^{2},\phi'_{R}(u_{R}(t))\rangle dt\leq C(\lVert\phi_{R}(u_{0,R})\rVert_{H^{1}(\Omega)}^{2}+1),
\end{align*}
where  the constant $C>0$ is independent of $R$  and $\eta$.
\end{lemma}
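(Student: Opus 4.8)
The plan is to prove the bound at the level of the smooth approximate problem \eqref{eq:app} through a differential energy identity obtained by testing the equation with the time derivative of $w_R:=\phi_R(u_R)$. Because $u_{0,R}\in C_c^\infty(\Omega)$ and the boundary condition forces $u_R=0$, hence $w_R=\phi_R(0)=0$, on $(0,T]\times\partial\Omega$, we have $w_R(t)\in V=H_0^1(\Omega)$ and $\partial_t w_R(t)\in V$ for every $t$; the classical smoothness of $u_R$ guaranteed by Proposition~\ref{pro:ext_approx} makes all the following manipulations rigorous. Multiplying $\partial_t u_R=\Delta w_R+f_R(\cdot,u_R)$ by $\partial_t w_R=\phi_R'(u_R)\,\partial_t u_R$ and integrating over $\Omega$, the left-hand side becomes exactly the dissipation $\langle|\partial_t u_R|^2,\phi_R'(u_R)\rangle$, and integrating the diffusion term by parts (the boundary term vanishing since $\partial_t w_R=0$ on $\partial\Omega$) produces $-\tfrac12\tfrac{d}{dt}\|\nabla w_R\|_{L^2(\Omega)}^2$, so that
\[
\tfrac12\tfrac{d}{dt}\|\nabla w_R\|_{L^2(\Omega)}^2+\langle|\partial_t u_R|^2,\phi_R'(u_R)\rangle=\int_\Omega f_R(\cdot,u_R)\,\partial_t w_R\dx.
\]

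Next I would turn the reaction term into an exact derivative. Setting $G_R(x,u):=\int_0^u f_R(x,s)\,\phi_R'(s)\,ds$, the chain rule gives $\int_\Omega f_R(\cdot,u_R)\partial_t w_R\dx=\tfrac{d}{dt}\int_\Omega G_R(x,u_R)\dx$, so the identity becomes $\tfrac{d}{dt}\mathcal E(t)+\langle|\partial_t u_R|^2,\phi_R'(u_R)\rangle=0$ with the Lyapunov functional
\[
\mathcal E(t):=\tfrac12\|\nabla w_R(t)\|_{L^2(\Omega)}^2-\int_\Omega G_R(x,u_R(t))\dx,
\]
which is therefore nonincreasing. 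The key quantitative input is the primitive bound $|G_R(x,u)|\le\|f\|_{L^\infty}\,|\phi_R(u)|$, a consequence of $\phi_R(0)=0$, the monotonicity of $\phi_R$, and $\|f_R\|_{L^\infty}\le\|f\|_{L^\infty}$; crucially, its constant depends only on $\|f\|_{L^\infty}$.

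I would then recover the two terms of the claim. Since $w_R(t)\in H_0^1(\Omega)$, the Poincaré inequality controls $\|w_R\|_{L^2}$, and hence the full $H^1$-norm, by $\|\nabla w_R\|_{L^2}$; combined with the primitive bound it also gives $\int_\Omega|G_R(x,u_R)|\dx\le\|f\|_{L^\infty}\|w_R\|_{L^1}\le C\|\nabla w_R\|_{L^2}$. From monotonicity $\mathcal E(T)\le\mathcal E(0)$ one obtains $\tfrac12\|\nabla w_R(T)\|_{L^2}^2\le\mathcal E(0)+C\|\nabla w_R(T)\|_{L^2}$, and absorbing the linear term by Young's inequality yields $\|\nabla w_R(T)\|_{L^2}^2\le C(\mathcal E(0)+1)$; together with $\mathcal E(0)\le\tfrac12\|\nabla\phi_R(u_{0,R})\|_{L^2}^2+\|f\|_{L^\infty}\|\phi_R(u_{0,R})\|_{L^1}\le C(\|\phi_R(u_{0,R})\|_{H^1}^2+1)$ and Poincaré, this gives the desired bound on $\|\phi_R(u_R(T))\|_{H^1}^2$. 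For the dissipation integral, integrating $\tfrac{d}{dt}\mathcal E=-\langle|\partial_t u_R|^2,\phi_R'(u_R)\rangle$ over $[T,T+1]$ gives $\int_T^{T+1}\langle|\partial_t u_R|^2,\phi_R'(u_R)\rangle\dt=\mathcal E(T)-\mathcal E(T+1)\le\mathcal E(0)+|\mathcal E(T+1)|$, and $|\mathcal E(T+1)|$ is controlled by the $H^1$-bound just established (applied at time $T+1$).

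The delicate point—and the part requiring the most care—is the uniformity of the final constant in both $R$ and $\eta$. This is achieved by never invoking the $R$-dependent bounds $m_R,M_R$ on $\phi_R'$ nor the $\eta$-dependent separation $\delta$ from Proposition~\ref{pro:ext_approx}, but instead estimating the reaction contribution solely through $\|f\|_{L^\infty}$ and the primitive bound, and by recovering the whole $H^1$-norm from the gradient via the Poincaré inequality, whose constant depends on $\Omega$ alone. The homogeneous Dirichlet condition enters decisively twice: it kills the boundary term in the energy identity and it places $w_R$ in $H_0^1(\Omega)$, making Poincaré available.
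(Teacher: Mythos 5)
Your proof is correct, and its engine is the same as the paper's: testing the equation with $\partial_t\phi_R(u_R)$ to produce the dissipation term $\langle|\partial_t u_R|^2,\phi_R'(u_R)\rangle$ as an exact time derivative of an energy functional. The difference is in how the energy is then controlled. The paper performs a second, independent estimate (testing with $\phi_R(u_R)$ itself), adds the two, and uses the resulting coercive term $\lambda\|\nabla\phi_R(u_R)\|_{L^2}^2$ to obtain a differential inequality $\partial_t Q+\lambda Q\le C$ for a modified energy $Q$ comparable to $\|\nabla\phi_R(u_R)\|_{L^2}^2$ up to constants; Gronwall then gives $Q(T)\le Q(0)e^{-\lambda T}+C$. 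You instead rely purely on the monotonicity $\mathcal E(T)\le\mathcal E(0)$ of your Lyapunov functional, and recover the $H^1$-bound at time $T$ from the lower bound $\mathcal E(T)\ge\tfrac12\|\nabla w_R(T)\|_{L^2}^2-C\|\nabla w_R(T)\|_{L^2}$ via Young and Poincar\'e. Your route is shorter and suffices for the inequality as stated (and your constant depends only on $\|f\|_{L^\infty(\Omega\times I)}$ and $\Omega$, so the uniformity in $R$ and $\eta$ holds as you argue, using also that $\phi_R(0)=0$, which follows from $(A_1)$); what you give up is the exponential decay $Q(T)\lesssim Q(0)e^{-\lambda T}+C$ that the paper's version actually establishes, a genuinely stronger dissipative statement, though the paper itself only records and later uses the weaker form appearing in the lemma. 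Your treatment of the dissipation integral over $[T,T+1]$ and of the boundary terms (using $w_R=\partial_t w_R=0$ on $\partial\Omega$) is sound.
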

\begin{proof}We multiply $\eqref{eq:app}_{1}$ by $\phi_{R}(u_{R}(t))$ and integrate over $x\in \Omega$. Then, using integration by parts and the boundedness of $f_{R}(\cdot,u_{R})$, together with Poincare's and Young's inequalities, we obtain
     \begin{align*}
        \partial_{t}\langle\Phi_{R}(u_{R}(t)),1\rangle+\lVert \nabla\phi_{R}(u_{R}(t))\rVert_{L^{2}(\Omega)}^{2}&=\langle f_{R}(\cdot,u_{R}(t)),\phi_{R}(u_{R}(t))\rangle\\&\leq
        C_{\varepsilon}+\varepsilon C\lVert \nabla\phi_{R}(u_{R}(t))\rVert_{L^{2}(\Omega)}^{2} ,
    \end{align*}
    for any $\varepsilon>0$, where 
    \begin{align*}
        \Phi_{R}(z):=\int_{0}^{z}\phi_{R}(s)ds.
    \end{align*}
    Hence,  choosing $\varepsilon>0$ small enough, it follows that 
    \begin{align*}
          \partial_{t}\langle\Phi_{R}(u_{R}(t)),1\rangle+\lambda\lVert \nabla\phi_{R}(u_{R}(t))\rVert_{L^{2}(\Omega)}^{2}\leq C, 
    \end{align*}
    where $C$ and $\lambda$ are positive constants independent of $R$ and $u_{0,R}$.

    Next, we multiply $\eqref{eq:app}_{1}$ by $\partial_{t}\phi_{R}(u_{R}(t))$ and integrate over $x\in\Omega$. Then, integration by parts implies that
    \begin{align*}
        &\langle|\partial_{t}u_{R}(t)|^{2},\phi'_{R}(u_{R}(t))\rangle+\partial_{t}\left[\frac{1}{2}\lVert \nabla \phi_{R}(u_{R}(t))\rVert_{L^{2}(\Omega)}^{2}-\langle\int_{0}^{u_{R}(t)}f_{R}(\cdot,v)\phi_{R}'(v)dv,1\rangle\right]=0.
    \end{align*}
    Adding these two estimates, we obtain 
    \begin{align}
       \partial_{t}Q(t)+\lambda\lVert \nabla\phi_{R}(u_{R}(t))\rVert_{L^{2}(\Omega)}^{2}+\langle|\partial_{t}u_{R}(t)|^{2},\phi'_{R}(u_{R}(t))\rangle\leq C,\label{eq:ineq-3}
    \end{align}
    where
    \begin{align*}
        Q(t)&=\frac{1}{2}\lVert \nabla \phi_{R}(u_{R}(t))\rVert_{L^{2}(\Omega)}^{2}-\langle\int_{0}^{u_{R}(t)}f_{R}(\cdot,v)\phi'_{R}(v)dv,1\rangle+\langle\Phi_{R}(u_{R}(t)),1\rangle\\
        &=\frac{1}{2}\lVert \nabla \phi_{R}(u_{R}(t))\rVert_{L^{2}(\Omega)}^{2}+q(t),
    \end{align*}
    and $C$ is independent of $R$. Using the boundedness of $u_R, f_R(\cdot,u_{R})$ and Hölder's and Young's inequalities, we observe that
    \begin{align*}
        |q(t)|&\leq C_\varepsilon+\varepsilon\lVert\phi_{R}(u_{R}(t))\rVert^{2}_{L^{2}(\Omega)}.
    \end{align*}
    Hence, choosing $\varepsilon$ small enough and using Poincare's inequality, we conclude that 
    \begin{align}
        C(\lVert \nabla\phi_{R}(u_{R}(t))\rVert_{L^{2}(\Omega)}^{2}-1)\leq Q(t)\leq C(\lVert \nabla\phi_{R}(u_{R}(t))\rVert_{L^{2}(\Omega)}^{2}+1),\label{eq:ineq-4}
    \end{align}where the constant $C$ is independent of $R$. 
   Together with \eqref{eq:ineq-3}, we therefore obtain 
   \begin{align}
        & \partial_{t}Q(t)+\lambda Q(t)+\langle|\partial_{t}u_{R}(t)|^{2},\phi'_{R}(u_{R}(t))\rangle\leq C,\label{eq:ineq-5}
    \end{align} 
    and since $\langle|\partial_{t}u_{R}(t)|^{2},\phi'_{R}(u_{R}(t))\rangle$ is nonnegative, we have \begin{align*}
        & \partial_{t}Q(t)+\lambda Q(t)\leq C.
    \end{align*}
    Hence, Gronwall's inequality implies that
    \begin{align}
        Q(T)&\leq Q(0)e^{-\lambda T}+C\int_{0}^{T}e^{-\lambda(T-s)}ds\leq Q(0)+C. \label{eq:ineq-6}
    \end{align}
    On the other hand, integrating \eqref{eq:ineq-5} over $t\in [T,T+1]$, we obtain
    \begin{align*}
        Q(T+1)-Q(T)+\lambda\int_{T}^{T+1}Q(s)ds+\int_{T}^{T+1}\langle|\partial_{t}u_{R}(s)|^{2},\phi'_{R}(u_{R}(s))\rangle ds\leq C,
    \end{align*}
    which, by \eqref{eq:ineq-6} and \eqref{eq:ineq-4}, yields
    \begin{align}
\int_{T}^{T+1}\langle|\partial_{t}u_{R}(s)|^{2},\phi'_{R}(u_{R}(s))\rangle ds\notag&\leq Q(T)-Q(T+1)-\lambda\int_{T}^{T+1}Q(s)ds+C\\&\leq Q(0)+C .\label{eq:ineq-7}
    \end{align}Summing \eqref{eq:ineq-6} and \eqref{eq:ineq-7}, we get \begin{align*}
       Q(T)+
\int_{T}^{T+1}\langle|\partial_{t}u_{R}(t)|^{2},\phi'_{R}(u_{R}(t))\rangle dt\leq Q(0)+C,
    \end{align*}which, together with \eqref{eq:ineq-4} and Poincare's inequality, gives the required inequality.
\end{proof}

The next lemma provides a uniform smoothing estimate.
\begin{lemma}\label{pro:H1est}
    Let the assumptions in Proposition \ref{pro:ext_approx} hold and $u_R$ be the approximate solutions. Then there exist positive constants $\kappa$ and $C$, which are independent of $R$ and  $\eta$, such that
    \begin{align}
        \lVert \phi_{R}(u_{R}(t))\rVert_{H^{1}(\Omega)}^{2}\leq C\frac{t^{\kappa}+1}{t^{\kappa}} \quad \text{for all}\ t>0.\label{eq:fphi}
    \end{align}
\end{lemma}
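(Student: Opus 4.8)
The plan is to upgrade the initial-data-dependent bound of Lemma~\ref{pro:H1bounds} into the initial-data-independent smoothing estimate \eqref{eq:fphi} by combining a \emph{universal} (superlinear) dissipation bound for the $L^1$-energy with a time-weighted Gronwall argument on the $H^1$-level. Throughout I would work with the two ingredients already produced in the proof of Lemma~\ref{pro:H1bounds}: the basic energy inequality
$$
\partial_t\langle\Phi_R(u_R(t)),1\rangle+\lambda\lVert\nabla\phi_R(u_R(t))\rVert_{L^2(\Omega)}^2\leq C,\qquad (\mathrm{E})
$$
and the higher-order inequality \eqref{eq:ineq-5} together with the equivalence \eqref{eq:ineq-4} between $Q(t)$ and $\lVert\nabla\phi_R(u_R(t))\rVert_{L^2(\Omega)}^2$, all constants being independent of $R$, $\eta$ and the initial data.

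The crucial first step is a bound on $E(t):=\langle\Phi_R(u_R(t)),1\rangle$ that does not see the initial datum. Since $\phi_R$ is increasing with $\phi_R(0)=0$ and $|u_R|<1$, one has the pointwise estimate $\Phi_R(z)\leq|z|\,|\phi_R(z)|\leq|\phi_R(z)|$, whence $E(t)\leq\lVert\phi_R(u_R(t))\rVert_{L^1(\Omega)}\leq C\lVert\phi_R(u_R(t))\rVert_{L^2(\Omega)}\leq C\lVert\nabla\phi_R(u_R(t))\rVert_{L^2(\Omega)}$, the last step being Poincar\'e's inequality applied to $\phi_R(u_R(t))\in V=H^1_0(\Omega)$. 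Inserting $\lVert\nabla\phi_R(u_R)\rVert_{L^2}^2\geq c\,E^2$ into $(\mathrm{E})$ turns it into the Riccati-type inequality $E'(t)+c\,E(t)^2\leq C$. Comparison with the ODE $y'=C-cy^2$, whose solutions emanating from $y(0^+)=+\infty$ decay like $(ct)^{-1}$, then yields $E(t)\leq C(1+t^{-1})$ for all $t>0$, with $C$ independent of $R$, $\eta$ and $u_{0,R}$. This upgrade from linear to quadratic dissipation, produced by the Poincar\'e inequality on the Dirichlet space $V$, is the mechanism that erases the dependence on the (possibly large and $\eta$-sensitive) initial energy.

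In the second step I would run a time-weighted estimate at the $H^1$-level using this bound. From \eqref{eq:ineq-5} one has $Q'(t)+\lambda Q(t)\leq C$. For small times $0<t\leq t_0$ I multiply by $(s-t/2)$ and integrate over $[t/2,t]$, obtaining $\tfrac{t}{2}Q(t)\leq\int_{t/2}^t Q\,ds+Ct^2$; the integral is controlled by integrating $(\mathrm{E})$ over $[t/2,t]$ and invoking $E(t/2)\leq C(1+t^{-1})$, which gives $\int_{t/2}^t\lVert\nabla\phi_R(u_R)\rVert_{L^2}^2\,ds\leq C(1+t^{-1})$ and hence $Q(t)\leq C(1+t^{-2})$. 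For large times $t\geq t_0$ I would instead apply the uniform Gronwall lemma on intervals of fixed length one: the bound $\int_\tau^{\tau+1}\lVert\nabla\phi_R(u_R)\rVert_{L^2}^2\,ds\leq C$, again obtained from $(\mathrm{E})$ and the universal bound on $E$, yields $Q(t)\leq C$. Combining the two regimes gives $Q(t)\leq C\,(t^2+1)/t^2$, and \eqref{eq:ineq-4} with Poincar\'e's inequality finally converts this into the claimed bound on $\lVert\phi_R(u_R(t))\rVert_{H^1(\Omega)}^2$, so that $\kappa=2$ suffices.

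I expect the genuine obstacle to be the first step. Without the Riccati dissipation, $(\mathrm{E})$ only yields $E(t)\leq E(0)e^{-\lambda' t}+C$, which retains the initial energy and diverges as $\eta\to0$; the observation that the $L^1$-energy is dominated by $\lVert\nabla\phi_R(u_R)\rVert_{L^2}$ through Poincar\'e is exactly what is needed, and the pointwise inequality $\Phi_R(z)\leq|\phi_R(z)|$ together with the $R$- and $\eta$-independence of the constants in $(\mathrm{E})$ and \eqref{eq:ineq-4} should be checked carefully. The time-weighting in the second step is then routine, the only care being to use short intervals near $t=0$ and fixed-length intervals for large $t$, so that the inhomogeneity $C$ does not accumulate over long time spans.
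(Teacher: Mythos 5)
Your proof is correct, but it takes a genuinely different route from the paper's. The paper removes the dependence on the initial datum by testing the equation with $|\phi_R(u_R)|^{\delta-1}\phi_R(u_R)$ for the fractional exponent $\delta_0=a^{-1}$ and invoking the growth assumption \eqref{eq:adphi}, which guarantees that the modified initial energy $\Phi_{R,\delta_0}(u_{0,R})$ is bounded uniformly in $\eta$ even though $\langle\Phi_R(u_{0,R}),1\rangle$ may blow up as $\eta\to0$; it then bootstraps to the exponent $\delta=1$ via the weight $t^N$, and recovers the pointwise bound through the mean value theorem and Lemma~\ref{pro:H1bounds} restarted at an intermediate time $T_0\in[t/2,t]$. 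You instead observe that, since $|u_R|\le 1$ and $\phi_R$ is increasing with $\phi_R(0)=0$, one has $\Phi_R(z)\le|z|\,|\phi_R(z)|\le|\phi_R(z)|$, so Poincar\'e's inequality on $V=H^1_0(\Omega)$ upgrades the linear dissipation in the basic energy inequality to the Riccati inequality $E'+cE^2\le C$, whose comparison solutions decay like $(ct)^{-1}$ from arbitrary initial values; this yields $E(t)\le C(1+t^{-1})$ with no reference to $u_{0,R}$ and, in particular, without ever using \eqref{eq:adphi}. Your subsequent time-weighted step is equivalent in substance to the paper's mean-value-theorem argument and both yield a harmless explicit $\kappa$. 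The trade-offs: your route is shorter, and it proves the lemma under exactly the hypotheses of Proposition~\ref{pro:ext_approx} as stated (the paper's proof quietly imports \eqref{eq:adphi}, which is only assumed later in Theorem~\ref{GAS}); the two points you should state explicitly are that the chain $\Phi_R(z)\le|\phi_R(z)|$ relies on $\phi_R(0)=0$ (or $|\phi_R(0)|$ uniformly small, which the construction in $(A_1)$ provides) together with the a priori bound $|u_R|\le 1$ from Proposition~\ref{pro:ext_approx} --- without the latter the Riccati mechanism would have to be replaced by a Nash-type iteration --- and that $\phi_R(u_R(t))$ indeed lies in $H^1_0(\Omega)$ so that Poincar\'e applies, an implicit assumption the paper's own estimates also make.
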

\begin{proof}First, we remark that it suffices to prove  \eqref{eq:fphi} for $t\leq 1$. 
Indeed, if we have 
 \begin{align}
        \lVert \phi_{R}(u_{R}(t))\rVert_{H^{1}(\Omega)}^{2}\leq C\frac{t^{\kappa}+1}{t^{\kappa}} \quad \text{for}\  0<t\leq 1,\label{eq:fphi'}
    \end{align}
    then for $t>1$, by Lemma \ref{pro:H1bounds} and Poincare's inequality, we have
    \begin{align*}
        \lVert \phi_{R}(u_{R}(t))\rVert_{H^{1}(\Omega)}^{2}
     \leq C(\lVert \phi_{R}(u_{R}(1))\rVert_{H^{1}(\Omega)}^{2}+1)
    \leq C,
    \end{align*}
    which, together with \eqref{eq:fphi'}, gives \eqref{eq:fphi} for any $t>0$.
    
Let now $t\leq 1$. We multiply $\eqref{eq:app}_{1}$ by $|\phi_{R}(u_{R}(t))|^{\delta-1}\phi_{R}(u_{R}(t))$ for $\delta>0$ to be specified later and integrate over $\Omega$. Then, using integration by parts, we have
\begin{align}
\begin{split}
        \partial_{t}\langle\Phi_{R,\delta}(u_{R}(t)),1\rangle&+C\int_{\Omega} |\nabla(\phi_{R}(u_{R}(t)))^{\frac{\delta+1}{2}}|^{2}\\
        &=\int_{\Omega} f_R(\cdot,u_{R}(t))|\phi_{R}(u_{R}(t))|^{\delta-1}\phi_{R}(u_{R}(t)), \label{eq:0}
\end{split}
\end{align}
    where 
    \begin{align*}
        \Phi_{R,\delta}(u):=\int_{0}^{u} |\phi_{R}(v)|^{\delta-1}\phi_{R}(v)dv. 
    \end{align*}
  To estimate the term on the right hand side, we use the boundedness of $f_R(\cdot,u_{R})$ and apply Hölder's, Young's and  Poincare's inequalities,  
   \begin{align*}
     &\int_{\Omega}f_R(\cdot,u_{R}(t))|\phi_{R}(u_{R}(t))|^{\delta-1}\phi_{R}(u_{R}(t)) \leq C \int_{\Omega} |\phi_{R}(u_{R}(t))|^{\delta}\\
     &\leq C\left[\int_{\Omega} |\phi_{R}(u_{R}(t))|^{\delta+1} dx\right]^{\frac{\delta}{\delta+1}} \notag
     \leq \varepsilon \int_{\Omega} |\phi_{R}(u_{R}(t))|^{\delta+1}+C_\varepsilon\\
     &=\varepsilon \int_{\Omega} ||\phi_{R}(u_{R}(t))|^{\frac{\delta+1}{2}}|^{2}+C_\varepsilon
     \leq \varepsilon C\int_{\Omega} |\nabla |\phi_{R}(u_{R}(t)))|^{\frac{\delta+1}{2}}|^{2}+C_\varepsilon,
    \end{align*} 
    for any $\varepsilon>0$. Choosing $\varepsilon$ sufficiently small, this estimate,  together with \eqref{eq:0} and Poincare's inequality, yields 
    \begin{align*}
        \partial_{t}\langle\Phi_{R,\delta}(u_{R}(t)),1\rangle
+\gamma\lVert |\phi_{R}(u_{R}(t))|^{\frac{\delta+1}{2}}\rVert_{H^{1}(\Omega)}^{2}\leq C 
    \end{align*}
    for some constant $\gamma>0$. Then, since $
        \lVert |\phi_{R}(u_{R})|^{\frac{\delta+1}{2}}\rVert_{L^{2}(\Omega)}^{2}=\lVert |\phi_{R}(u_{R})|^{\delta+1}\rVert_{L^{1}(\Omega)}
$, we obtain  
    \begin{align}   \partial_{t}\langle\Phi_{R,\delta}(u_{R}(t)),1\rangle
+\gamma\lVert \nabla|\phi_{R}(u_{R}(t))|^{\frac{\delta+1}{2}}\rVert_{L^{2}(\Omega)}^{2}+\gamma \lVert |\phi_{R}(u_{R}(t))|^{\delta+1}\rVert_{L^{1}(\Omega)}\leq C.\label{eq:3}
      \end{align}   
       
Next, let us integrate \eqref{eq:3} with respect to $t$. Then, considering the inequality 
\begin{align*}
\Phi_{R,\delta_{0}}(u_{0,R})&\leq\int_{0}^{|u_{0,R}|} |\phi_{R}(v)|^{\frac{1}{a}}dv
\leq \int_{0}^{|u_{0,R}|} [\phi(v)+\frac{1}{R}v]^{\frac{1}{a}}dv\\
&\leq C\int_{0}^{|u_{0,R}|} [(1-v)^{1-a}+1]^{\frac{1}{a}}dv\leq C\int_{0}^{|u_{0,R}|} [(1-v)^{\frac{1}{a}-1}+1]dv\notag\\
&\leq C[a(1-(1-|u_{0,R}|)^{\frac{1}{a}})+|u_{0,R}|]\leq C,
\end{align*}
due to \eqref{eq:adphi} and \eqref{eq:ass_smooth_u0}, where $\delta_{0}:=a^{-1}$, we obtain 
\begin{align}
    \int_{0}^{1} (\lVert \nabla|\phi_{R}(u_{R}(t))|^{\frac{\delta_{0}+1}{2}}\rVert_{L^{2}(\Omega)}^{2}+\lVert |\phi_{R}(u_{R}(t))|^{\delta_{0}+1}\rVert_{L^{1}(\Omega)})dt\leq C. \label{eq:5}
\end{align}

Now, let us assume $\delta\geq \delta_{0}$. We multiply \eqref{eq:3} by $t^{N}$ for $N\in \mathbb{N}$ and integrate over $\Omega$. Then, using integrating by parts and the inequality
\begin{align*}
    \Phi_{R,\delta}(u_{R}(t))\leq \int_{0}^{|u_{R}(t)|}|\phi_{R}(v)|^{\delta}dv\leq |u_{R}(t)||\phi_{R}(u_{R}(t))|^{\delta}\leq |\phi_{R}(u_{R}(t))|^{\delta},
\end{align*}
we get
\begin{align}
\begin{split}
&\int_{0}^{1} t^{N}(\lVert \nabla|\phi_{R}(u_{R}(t))|^{\frac{\delta+1}{2}}\rVert_{L^{2}(\Omega)}^{2}+\lVert |\phi_{R}(u_{R}(t))|^{\delta+1}\rVert_{L^{1}(\Omega)})dt\\
&\leq C\left[1+\int_{0}^{1}Nt^{N-1}\langle\Phi_{R,\delta}(u_{R}(t)),1\rangle dt\right] \\
&\leq C\left[1+\int_{0}^{1}t^{N-1}\lVert \phi_{R}(u_{R}(t)\rVert^{\delta}_{L^{\delta}(\Omega)}dt\right].\label{eq:6}
\end{split}
\end{align}
Let us take $\delta=1$ in \eqref{eq:6}. Then, by Hölder's and Young's inequalities, we have
\begin{align*}
    &\int_{0}^{1} t^{N}(\lVert \nabla(\phi_{R}(u_{R}(t)))\rVert_{L^{2}(\Omega)}^{2}+\lVert \phi_{R}(u_{R}(t))\rVert_{L^{2}(\Omega)}^{2})dt\\
    &\leq C\left[1+\int_{0}^{1}t^{N-1}\lVert \phi_{R}(u_{}(t))\rVert_{L^{1}(\Omega)}dt\right]
    \leq C\left[1+\int_{0}^{1}\lVert \phi_{R}(u_{R}(t))\rVert^{\delta_{0}+1}_{L^{\delta_{0}+1}(\Omega)}dt\right],
\end{align*}
which, together with \eqref{eq:5}, yields \begin{align}
\int_{0}^{1} t^{N}\lVert \phi_{R}(u_{R}(t))\rVert_{H^{1}(\Omega)}^{2}dt
    \leq C. \label{eq:7}
\end{align} 
It follows from \eqref{eq:7} that for any $t\in(0,1]$
\begin{align}
   \int_{t/2}^{t} s^{N}\lVert \phi_{R}(u_{R}(s))\rVert_{H^{1}(\Omega)}^{2}ds\leq \int_{0}^{1} t^{N}\lVert \phi_{R}(u_{R}(t))\rVert_{H^{1}(\Omega)}^{2}dt
    \leq C. \label{eq:9}
\end{align}
On the other hand, considering Lemma \ref{pro:H1bounds}, by the mean value theorem, there exists $T_{0}\in [t/2,t]$ such that
\begin{align*}
    \int_{t/2}^{t} s^{N}\lVert \phi_{R}(u_{R}(s))\rVert_{H^{1}(\Omega)}^{2}ds=\frac{t}{2}T_{0}^{N}\lVert \phi_{R}(u_{R}(T_{0}))\rVert_{H^{1}(\Omega)}^{2}.
\end{align*}
Then, by \eqref{eq:9} and using $T_{0}\geq t/2$, we have
\begin{align*}
        \lVert \phi_{}(u_{R}(T_{0}))\rVert_{H^{1}(\Omega)}^{2}\leq \frac{2}{t}CT_{0}^{-N}\leq Ct^{-N-1}.
\end{align*}
Hence, replacing the initial time $t=0$ by $t=T_{0}$ in Lemma \ref{pro:H1bounds}, we obtain 
\begin{align*}
    \lVert \phi_{R}(u_{R}(t)\rVert_{H^{1}(\Omega)}^{2}\leq C(\lVert \phi_{R}(u_{R}(T_{0})\rVert_{H^{1}(\Omega)}^{2}+1)\leq C(t^{-N-1}+1),
\end{align*}
which yields \eqref{eq:fphi} with $\kappa=N+1$. It is important to note that the  constants $C$ in all estimates are independent of $R$ and $\eta$.

\end{proof}

\begin{theorem}\label{thm:existence}
Let the initial data satisfy 
\eqref{eq:ass_smooth_u0}.
Then, the degenerate problem \eqref{eq:SP} possesses a solution $u$ belonging to the class
\begin{align*}
&u\in L^{\infty}((0,\infty)\times\Omega)\cap C([0,\infty);L^{2}(\Omega)),\ 
u_{t}\in L^{2}((0,\infty);H^{-1}(\Omega))\\
&\phi(u)\in L^{\infty}((0,\infty);H^{1}(\Omega))\cap C([0,\infty);L^{2}(\Omega)).
\end{align*}
Furthermore, the solution satisfies $\|u\|_{L^\infty((0,\infty)\times\Omega)}\leq 1-\delta$, for some $\delta\in(0,1)$ depending on $\eta,\Omega, \phi$ and $\|f\|_{L^\infty(\Omega\times I)}$, and the following estimates hold, 
\begin{align}
\lVert \phi(u(t))\rVert_{H^{1}(\Omega)}^{2}&
\leq C(\lVert\phi(u_0)\rVert_{H^{1}(\Omega)}^{2}+1),\label{eq:pointwise2}\\
        \lVert \phi(u(t))\rVert_{H^{1}(\Omega)}^{2}+\|\partial_t u(t)\|^2_{H^{-1}(\Omega)}&
        \leq C\frac{t^{\kappa}+1}{t^{\kappa}}, \label{eq:fphi2}
\end{align}
for all $t>0$, where the constants $C$ and $\kappa$ are independent of $\eta$.    
\end{theorem}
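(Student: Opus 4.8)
The plan is to construct the solution $u$ as a limit of the approximate solutions $u_R$ from Proposition~\ref{pro:ext_approx}, passing to the limit $R\to\infty$ and exploiting the $R$-uniform estimates of Lemmas~\ref{pro:H1bounds} and~\ref{pro:H1est}. First I would collect, for fixed $\eta$, the available $R$-uniform bounds: the pointwise bound $-1+\delta\le u_R\le 1-\delta$ from Proposition~\ref{pro:ext_approx}; the $L^\infty((0,\infty);H^1(\Omega))$ bound on $\phi_R(u_R)$ from Lemmas~\ref{pro:H1bounds}--\ref{pro:H1est} (note that $\phi_R(u_{0,R})$ is bounded in $H^1(\Omega)$ by $(A_1)$, $(A_3)$ and $\phi(u_0)\in V$); and the weighted temporal bound $\int_T^{T+1}\langle|\partial_t u_R|^2,\phi_R'(u_R)\rangle\,dt\le C$. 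Along a subsequence these give $u_R\wsto u$ weakly-$\ast$ in $L^\infty((0,\infty)\times\Omega)$ and $\phi_R(u_R)\wsto\chi$ weakly-$\ast$ in $L^\infty((0,\infty);H^1(\Omega))$ and weakly in $L^2_{\mathrm{loc}}((0,\infty);H^1(\Omega))$.

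The crux is to identify $\chi=\phi(u)$, which requires \emph{strong} compactness of $\phi_R(u_R)$. The key observation is that, since $|u_R|\le 1-\delta$ stays uniformly away from the singularity, $(A_1)$ provides an $R$-independent Lipschitz bound $\phi_R'(u_R)\le L_\delta$ on the range of $u_R$. Hence the degenerate temporal estimate upgrades to an unweighted one, $\int_T^{T+1}\|\partial_t\phi_R(u_R)\|_{L^2(\Omega)}^2\,dt=\int_T^{T+1}\int_\Omega\phi_R'(u_R)^2|\partial_t u_R|^2\le L_\delta\int_T^{T+1}\langle|\partial_t u_R|^2,\phi_R'(u_R)\rangle\,dt\le C$, so $\partial_t\phi_R(u_R)$ is bounded in $L^2_{\mathrm{loc}}((0,\infty);L^2(\Omega))$. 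Since $\phi_R(u_R)$ is bounded in $L^2_{\mathrm{loc}}((0,\infty);H^1(\Omega))$ and $H^1(\Omega)\emb\emb L^2(\Omega)$, the Aubin--Lions lemma (with a diagonal argument over increasing time intervals) yields $\phi_R(u_R)\to\chi$ strongly in $L^2_{\mathrm{loc}}$ and a.e. Because $\phi_R^{-1}\to\phi^{-1}$ uniformly on compacts, $u_R=\phi_R^{-1}(\phi_R(u_R))\to\phi^{-1}(\chi)$ a.e.; setting $u:=\phi^{-1}(\chi)$ gives $\chi=\phi(u)$, and by dominated convergence $u_R\to u$ in $L^p_{\mathrm{loc}}$ for every $p<\infty$.

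With this convergence, I would pass to the limit in the weak formulation~\eqref{eq:DI}: the diffusion term converges by the weak $L^2_{\mathrm{loc}}(H^1)$ convergence of $\phi_R(u_R)$, the reaction term by $(A_2)$ together with $u_R\to u$ a.e.\ and the Lipschitz continuity and boundedness of $f$, and the time-derivative term in the distributional sense, so $u$ is a weak solution. The regularity and estimates then follow from lower semicontinuity and the uniform bounds: $\|u\|_{L^\infty}\le 1-\delta$ from the pointwise bounds; $\phi(u)\in L^\infty((0,\infty);H^1(\Omega))$ and~\eqref{eq:pointwise2} by passing to the limit in Lemma~\ref{pro:H1bounds}, using weak lower semicontinuity of the $H^1(\Omega)$-norm for a.e.\ $t$ and $\limsup_R\|\phi_R(u_{0,R})\|_{H^1(\Omega)}\le\|\phi(u_0)\|_{H^1(\Omega)}$; the $H^1$-part of~\eqref{eq:fphi2} from Lemma~\ref{pro:H1est}; and the $H^{-1}$-bound on $\partial_t u$ directly from the equation via $\|\partial_t u(t)\|_{H^{-1}(\Omega)}\le\|\nabla\phi(u(t))\|_{L^2(\Omega)}+C\|f\|_{L^\infty(\Omega\times I)}$. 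Finally, $\partial_t\phi(u)\in L^2_{\mathrm{loc}}((0,\infty);L^2(\Omega))$ (the weak limit of $\partial_t\phi_R(u_R)$) together with $\phi(u)\in L^2_{\mathrm{loc}}(H^1)$ gives $\phi(u)\in C([0,\infty);L^2(\Omega))$, and since $\phi^{-1}$ is uniformly continuous on the compact range of $u$, also $u\in C([0,\infty);L^2(\Omega))$.

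The main obstacle is precisely the identification $\chi=\phi(u)$: the natural energy estimate controls only the \emph{degenerate} quantity $\phi_R'(u_R)|\partial_t u_R|^2$, which is too weak for compactness of $u_R$ itself (indeed $u_R$ need not lie in $H^1(\Omega)$, since $\nabla u_R=\phi_R'(u_R)^{-1}\nabla\phi_R(u_R)$ with $\phi_R'$ vanishing near the degeneracy). The resolution is to transfer all spatial and temporal regularity onto $\phi_R(u_R)$ and to use the uniform $L^\infty$-separation from the singularity to convert the weighted temporal bound into an unweighted $L^2_{\mathrm{loc}}(L^2)$ bound; without the bound $|u_R|\le 1-\delta$ the constant $L_\delta$ would degenerate and the Aubin--Lions argument would break down.
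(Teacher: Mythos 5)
Your proposal is correct and follows essentially the same route as the paper: both construct $u$ as the limit of the non-degenerate approximations $u_R$, and both hinge on the same key observation that the uniform separation $|u_R|\le 1-\delta$ makes $\phi_R'(u_R)$ uniformly bounded, so the weighted estimate $\int\langle|\partial_t u_R|^2,\phi_R'(u_R)\rangle\,dt\le C$ upgrades to an $L^2$-bound on $\partial_t\phi_R(u_R)$, yielding strong compactness of $\phi_R(u_R)$ and identification of the limit via $\phi^{-1}$. The only (immaterial) difference is the compactness tool — you invoke Aubin--Lions in $L^2_{\mathrm{loc}}(L^2)$ where the paper uses Arzel\`a--Ascoli in $C([0,T];L^2(\Omega))$ followed by a Jensen/superadditivity argument to get $u_R\to u$ in $C([0,T];L^2(\Omega))$.
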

\begin{proof} 
Without loss of generality we assume that $\Omega$ is smooth. The proof extends to a general bounded domain by approximating $\Omega$ with smooth domains $\Omega_k$
such that $\Omega=\bigcup_k\Omega_k$ and applying the subsequent arguments on $\Omega_k.$ The constants in the estimates will be independent of $k$.
\\
Let $u_R$ be the approximate solutions. By Proposition \ref{pro:ext_approx}, we have the uniform bound 
\begin{align}\label{eq:uniformbounds}
|\phi_{R}(u_{R})|+|\phi_{R}'(u_{R})|\leq C \qquad \text{in}\ (0,\infty)\times\Omega.
\end{align}
Moreover, by Lemma \ref{pro:H1bounds}, 
\begin{align}\label{eq:H1bounds}
    \|\phi_{R}(u_{R})\|_{L^{\infty}((0,\infty);H^{1}(\Omega))}+\|\partial_tu_{R}\|_{L^{\infty}((0,\infty);H^{-1}(\Omega))}\leq C,
\end{align}
for some constant $C\geq 0$ independent of $R$. 
Furthermore, we observe that 
$$
\|\partial_t \phi_R(u_R)\|_{L^2((0,T)\times\Omega)}^2=
\| \phi_R'(u_R)\partial_t u_R\|_{L^2((0,T)\times\Omega)}^2\leq C,
$$
by Lemma \ref{pro:H1bounds} and \eqref{eq:uniformbounds}. 
We observe that for all $t\in[0,T]$ the set $\{\phi_R(u(t))\}$ is relatively compact in $L^2(\Omega)$ and the family $\{\phi_R(u_R)\}$ is equicontinuous. Hence, the Arzela-Ascoli Theorem implies that there exists $w\in C([0,T];L^2(\Omega))$ such that 
$$
\phi_R(u_R)\to w\ \text{ in } \ C([0,T];L^2(\Omega)).
$$
along a subsequence. Refining the subsequence, we can show that
$\phi_R(u_R)\to w$ a.e. in $\Omega_T$ and consequently, $|w|\leq 1-\delta$ a.e. in $\Omega_T.$
\\
By \eqref{eq:H1bounds}, $\nabla \phi_R(u_R)$ is uniformly bounded in $L^2(\Omega_T)$ and hence, $\nabla \phi_R(u_R)\rightharpoonup \psi$ in  
$L^2(\Omega_T)$. Combining it with the above arguments, we conclude that $\psi=\nabla w$ and 
$w\in L^2(0,T;H_0^1(\Omega)).$
\\
Finally, we set $u=\phi^{-1}(w)$ and show that $u_R\to u$. We observe that 
$\phi(u_R)\to w$ a.e. in $\Omega_T$ and since $\phi^{-1}:[\phi(-1+\delta),\phi(1-\delta)]\to[-1+\delta,1-\delta]$ is a continuous bijection, we conclude that   $u_R\to u$
and $|u|\leq 1-\delta$ a.e. in $\Omega_T$. Recall that $\phi(0)=0$, $\phi$ is convex on $[0,1)$ and
concave on $(-1,0]$. Hence, $\phi_+$ and $\phi_-$ are convex, where $\phi_{+}:= \max\{\phi, 0\}$ and $\phi_{-}:=-(-\phi)_{+}$ denote the positive and negative parts of $\phi$, respectively, such that $\phi=\phi_{+}-\phi_{-}$. 
Using Jensen's inequality and the superadditivity of $\phi_+$, we obtain 
\begin{align*}
&   \phi_+\left(\fint_\Omega|u_+(t)-u_{R,+}(t)| \right)\leq 
   \fint_\Omega\phi_+(|u_+(t)-u_{R,+}(t)|) \\
   &\leq \fint_\Omega|\phi_+(u_+(t))-\phi_+(u_{R,+}(t))|\\
   &\leq \fint_\Omega|\phi_+(u_+(t))-\phi_{R,+}(u_{R,+}(t))|+\fint_\Omega|\phi_{R,+}(u_{R,+}(t))-\phi_+(u_{R,+}(t))|\\
   &\leq C\left( \|w_+(t)-\phi_{R,+}(u_{R,+}(t))\|_{L^2(\Omega)}+\|\phi_{R,+}-\phi_{+}\|_{C([1-\delta,1+\delta])}\right)\to 0
\end{align*}
uniformly in $t$ as $R\to\infty$, where $\fint_{\Omega}u=\frac{1}{|\Omega|}\int_{\Omega}u$
 denotes the averaged integral of $u$ over $\Omega$.  Applying $\phi^{-1}_+$ and  using the boundedness of $u_+(t)$ and $u_{R,+}(t)$, we conclude that $u_{R,+}\to u_+$ in 
$C([0,T];L^2(\Omega))$. Similarly, we deduce that $u_{R,-}\to u_-$ in 
$C([0,T];L^2(\Omega))$, for details we refer to the proof of Theorem 2.7 in \cite{MullerSonner}. Hence, we obtain $u_R=u_{R,+}-u_{R,-}\to u_{+}-u_{-}= u$ in 
$C([0,T];L^2(\Omega))$.
\\
For the convergence of the term involving  $f_R$, we observe that 
\begin{align*}
    &\|f(\cdot,u)-f_R(\cdot,u_R)\|_{L^2(\Omega_T)}\\
    &\leq 
    \|f(\cdot,u)-f(\cdot,u_R)\|_{L^2(\Omega_T)}+\|f(\cdot,u_R)-f_R(\cdot,u_R)\|_{L^2(\Omega_T)}\\
    &\leq L\|u-u_R\|_{L^2(\Omega_T)}+\sup_{z\in I}\|f(\cdot,z)-f_R(\cdot,z)\|_{L^2(\Omega)}\to 0.
\end{align*}
Hence, we can now pass to the limit $R\to\infty$ in the weak formulation for the approximate problem \eqref{eq:app} and conclude that the limit $u$ is a weak solution of the degenerate problem \eqref{eq:SP}.
\\ 
The fact that the solution belongs to the stated class and that the estimate \eqref{eq:pointwise2} holds are immediate consequences of \eqref{eq:uniformbounds}, \eqref{eq:H1bounds} and Lemma \ref{pro:H1bounds}. The estimate \eqref{eq:fphi2} follows from \eqref{eq:H1bounds} and the smoothing estimate in Lemma \ref{pro:H1est}.
\end{proof}

We now set 
$$\mathcal{X}_0=\{u\in L^1(\Omega): u\text{ satisfies } \eqref{eq:ass_smooth_u0}\}.
$$
Then, by Theorem \ref{WPS} and Theorem \ref{thm:existence} the degenerate problem \eqref{eq:SP} generates a semigroup $\{\mathcal{S}(t)\}_{t\geq 0}$ in $\mathcal{X}_0$ 
that is Lipschitz continuous in $L^1(\Omega)$. 
Indeed, the assumption $\|u_0\|_{L^\infty(\Omega)}\leq 1-\eta$ implies that $\Phi(u_0)\in L^{\infty}(\Omega)$ and hence, the $L^1$-contraction in Theorem \ref{WPS} holds for the solutions considered in Theorem \ref{thm:existence}.
More specifically,
$\mathcal{S}(t)u_0:=u(t)$, where 
$u$ is the solution of \eqref{eq:SP} with initial data $u_0\in\mathcal{X}_0$. This solution $u$ is obtained as limit 
$u(t)=\lim_{R\to\infty}u_R(t)$ in $L^2(\Omega),$
where $u_R$ are the solutions of the non-degenerate approximate problems  \eqref{eq:app}.
\\
Using the $L^1$-contraction in Theorem \ref{WPS}, we can extend $\{\mathcal{S}(t)\}_{t\geq 0}$ to a semigroup $\{\mathcal{S}(t)\}_{t\geq 0}$ in the phase space $\mathcal{X}=\overline{\mathcal{X}_0}$ which is Lipschitz continuous in $L^1(\Omega)$. More specifically, we take $u_0^k\in \mathcal{X}_0$ such that $u_0^k\to u_0$ in $L^1(\Omega)$ and then set 
$$
\mathcal{S}(t)u_0:=\lim_{k\to\infty}\mathcal{S}(t)u_0^k\qquad \text{in}\ L^1(\Omega).
$$
\begin{theorem}\label{thm:limit}
Let $u_{0}\in \mathcal{X}$ and $u(t)=\mathcal{S}(t)u_0.$
Then,  $u\in L^{\infty}((0,\infty)\times\Omega)\cap C([0,\infty),L^{1}(\Omega))$  and for all $t>0$ the set $\{x\in \Omega:|u(t,x)|=1\}$ has Lebesgue measure zero. Moreover, the estimate \eqref{eq:fphi2} holds for $u$ and $u$ is a solution of \eqref{eq:SP}. 
\\
As a consequence, if $u_{0}\in \mathcal{X}$ and $u_R$ is the corresponding solution
of the auxiliary problem \eqref{eq:app}, 
then, the solution $u(t)= S(t)u_0$ of the degenerate problem is the limit 
$u(t) =  \lim_{R\to\infty}u_R(t)$ in $L^1(\Omega)$. 
\end{theorem}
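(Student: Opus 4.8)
The plan is to realise $u=\mathcal S(\cdot)u_0$ for general $u_0\in\mathcal X$ as an $L^1$-limit of the solutions $u^k=\mathcal S(\cdot)u_0^k$ associated with regularized data $u_0^k\in\mathcal X_0$, $u_0^k\to u_0$ in $L^1(\Omega)$, and to transfer the properties from Theorem~\ref{thm:existence} to the limit. First I would fix such a sequence $(u_0^k)$ (which exists since $\mathcal X=\overline{\mathcal X_0}$) and apply the $L^1$-contraction \eqref{eq:L1C} with $\tilde f=f$ to get
$$\|u^k(t)-u^j(t)\|_{L^1(\Omega)}\leq e^{Lt}\|u_0^k-u_0^j\|_{L^1(\Omega)}$$
uniformly on compact time intervals. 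Hence $(u^k)$ is Cauchy in $C([0,T];L^1(\Omega))$ for every $T>0$; its limit is exactly $u=\mathcal S(\cdot)u_0$ as defined before the theorem, and $u\in C([0,\infty);L^1(\Omega))$. Passing to an a.e.-convergent subsequence and using $\|u^k\|_{L^\infty}\leq 1$ (Proposition~\ref{pro:ext_approx}, Theorem~\ref{thm:existence}) gives $|u|\leq 1$ a.e., so $u\in L^\infty((0,\infty)\times\Omega)$.

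The decisive point is the smoothing, which I would extract from the fact that the constants in Theorem~\ref{thm:existence} are independent of $\eta$, hence of $k$. For fixed $t>0$, estimate \eqref{eq:fphi2} yields a bound $\|\phi(u^k(t))\|_{H^1(\Omega)}\leq C_t$ uniform in $k$. Since $u^k(t)\to u(t)$ a.e.\ and $\phi(s)\to\pm\infty$ as $s\to\pm1$, on the set $\{|u(t,x)|=1\}$ one has $|\phi(u^k(t,x))|\to\infty$; Fatou's lemma combined with the uniform $L^1$-bound on $\phi(u^k(t))$ (implied by the $H^1$-bound) then forces this set to be Lebesgue-null. Consequently $\phi(u^k(t))\to\phi(u(t))$ a.e., so the weak $H^1$-limit of a subsequence of $\phi(u^k(t))$ is identified as $\phi(u(t))$, and weak lower semicontinuity of the $H^1$- and $H^{-1}$-norms propagates the bounds in \eqref{eq:fphi2} to $u$.

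To show that $u$ solves \eqref{eq:SP} I would work on an arbitrary slab $[\tau,T]$ with $\tau>0$, where \eqref{eq:fphi2} furnishes $k$-uniform bounds on $\phi(u^k)$ in $L^\infty(\tau,T;V)$ and on $\partial_t u^k$ in $L^\infty(\tau,T;V^*)$; together with $f(\cdot,u^k)\to f(\cdot,u)$ in $L^2$ (from \eqref{eq:F1} and $u^k\to u$) this lets me pass to the limit in the weak formulation \eqref{eq:DI}, so that $u$ restricted to $(\tau,\infty)$ is the weak solution in the sense of Definition~\ref{WS} with datum $u(\tau)$, which is admissible since $\phi(u(\tau))\in V$ and $\Phi(u(\tau))\in L^1(\Omega)$; the condition $u(0)=u_0$ is recovered from the $C([0,\infty);L^1)$-convergence. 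The main obstacle is precisely that for general $u_0\in\mathcal X$ the estimate \eqref{eq:fphi2} blows up like $t^{-\kappa}$ as $t\to0$, so $\phi(u)$ is controlled only in $L^2_{loc}((0,\infty);V)$ and the limit must be understood as a solution in this local-in-time sense, the singularity at $t=0$ being absorbed into the $L^1$-continuity of the trajectory.

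Finally, for the concluding assertion I would compare the approximate solutions $u_R$ (with $u_{0,R}\to u_0$ in $L^1$) to the solutions $u_R^k$ of \eqref{eq:app} associated with $u_0^k$. The $L^1$-contraction for the non-degenerate problems \eqref{eq:app} (standard for smooth uniformly parabolic equations, with constant governed only by the $R$-uniform Lipschitz bounds from $(A_1)$--$(A_2)$) gives $\|u_R(t)-u_R^k(t)\|_{L^1}\leq e^{Lt}\|u_{0,R}-u_{0,R}^k\|_{L^1}$. Splitting
$$\|u_R(t)-u(t)\|_{L^1}\leq\|u_R(t)-u_R^k(t)\|_{L^1}+\|u_R^k(t)-u^k(t)\|_{L^1}+\|u^k(t)-u(t)\|_{L^1}$$
and using Theorem~\ref{thm:existence} ($u_R^k\to u^k$ as $R\to\infty$ for fixed $k$) together with the convergence established above (last term) and the contraction (first term), a diagonal argument, first choosing $k$ large and then $R$ large, yields $u_R(t)\to u(t)$ in $L^1(\Omega)$.
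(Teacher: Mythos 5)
Your proposal is correct and follows essentially the same route as the paper: approximate $u_0$ by data in $\mathcal{X}_0$, use the $L^1$-contraction to pass to the limit in $C([0,\infty);L^1(\Omega))$, and transfer the $\eta$-independent bounds of Theorem~\ref{thm:existence} to the limit. The only cosmetic difference is in the null-set step, where you apply Fatou's lemma directly to $|\phi(u^k(t))|$ instead of the paper's Chebyshev-type measure estimate combined with a cut-off function and dominated convergence; the remaining details you supply (weak lower semicontinuity for \eqref{eq:fphi2}, passage to the limit in the weak formulation on slabs $[\tau,T]$, and the three-term splitting for $u_R\to u$) correctly flesh out the parts the paper delegates to the cited reference.
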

\begin{proof}
Let $u_0^k\in L^1(\Omega)$ satisfy \eqref{eq:ass_smooth_u0} and $u^k_0\to u_0$ in $L^1(\Omega).$ Then, by Theorem \ref{thm:existence} and Theorem  \ref{WPS} the corresponding solutions satisfy $u_k\in C([0,\infty);L^1(\Omega))$ and hence, also $\lim_{k\to\infty}u_k=u\in C([0,\infty);L^1(\Omega)).$ 
    \\
Due to Theorem \ref{WPS}, we have $\|u_k(t)-u(t)\|_{L^1(\Omega)}\to 0$ for all $t\geq 0$ and hence, $u_k(t)\to u(t)$ a.e. in $\Omega.$ 
Therefore, since $\phi$ is monotone, using Poincare's inequality and Lemma \ref{pro:H1est}, we conclude that for any $\delta\in(0,1)$
    $$
    |\{x\in \Omega:|u^k(t,x)|\geq 1-\delta\}|\leq \frac{C_t}{\min\{\phi(1-\delta)^2,\phi(-1+\delta)^2\}},
    $$
    where the constant $C_t$ depends on time, but is independent of $k$. 
    Let $\rho_\delta:\mathbb{R}\to[0,1]$ be a continuous function such that 
    $$
    \rho_\delta(z)=\begin{cases}
        0&z\leq 1-2\delta,\\
        1&z\geq 1-\delta.
    \end{cases}
    $$
    Then, the dominated convergence theorem implies that
    \begin{align*}
    |\{x\in \Omega:u(t,x)\geq 1-\delta\}|&\leq\int_\Omega\rho_\delta(u(t,x))dx    =\lim_{k\to\infty}\int_\Omega\rho_\delta(u^k(t,x))dx\\
    &\leq \limsup_{k\to\infty}|\{x\in \Omega:u^k(t,x)\geq 1-2\delta\}|\leq \frac{C_t}{\phi(1-2\delta)^2}.
    \end{align*}
  Taking the limit $\delta\to 0$ in the above estimate, we conclude that
$|\{x\in \Omega:u(t,x)=1\}|=0.$
Analogously, we can show that $|\{x\in \Omega:u(t,x)=-1\}|=0.$
\\
As in the proof of Theorem 3.5 in \cite{Efendiev2009} it can now be shown that $u$ satisfies estimate \eqref{eq:fphi2} and $u$ is a weak solution of \eqref{eq:SP}.
The last statement is a consequence of the $L^1$-contraction, see the proof of Corollary 3 in \cite{Efendiev2009}.
\end{proof}

We are now in a position to prove the asymptotic compactness of the semigroup.

\begin{lemma}\label{ACS}
 For any $T>0$, the set $\bigcup\limits_{t\geq T} \mathcal{S}(t)\mathcal{X}$ is relatively compact in $L^1(\Omega)$.
\end{lemma}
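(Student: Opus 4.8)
The plan is to exploit the instantaneous smoothing estimate \eqref{eq:fphi2}, which bounds $\norm{\phi(u(t))}_{H^1(\Omega)}$ solely in terms of $t$, independently of the size of the initial datum. Since the map $t\mapsto (t^\kappa+1)/t^\kappa = 1+t^{-\kappa}$ is decreasing, for every $t\geq T$ and every $u_0\in\mathcal{X}$ we obtain the uniform bound
\begin{equation*}
\norm{\phi(\mathcal{S}(t)u_0)}_{H^1(\Omega)}^2\leq C\frac{T^\kappa+1}{T^\kappa}=:C_T,
\end{equation*}
with $C_T$ depending only on $T$. Because the solutions satisfy $\phi(u)=0$ on $\partial\Omega$, in fact $\phi(\mathcal{S}(t)u_0)\in V=H^1_0(\Omega)$. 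Hence the set $\{\phi(\mathcal{S}(t)u_0):t\geq T,\ u_0\in\mathcal{X}\}$ is bounded in $H^1_0(\Omega)$.

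First I would invoke the Rellich--Kondrachov theorem: for a bounded domain $\Omega$ the embedding $H^1_0(\Omega)\emb L^2(\Omega)$ is compact, so the set above is relatively compact in $L^2(\Omega)$, and therefore, since $\Omega$ is bounded and $L^2(\Omega)\emb L^1(\Omega)$ continuously, relatively compact in $L^1(\Omega)$ as well.

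The remaining and genuinely delicate step is to transfer this compactness from $\phi(u)$ back to $u$ itself. Here the degeneracy $\phi'(0)=0$ prevents $\phi^{-1}$ from being Lipschitz near the origin, so no $H^1$-bound on $\phi(u)$ yields an $H^1$-bound on $u$, and a direct compact-embedding argument for $u$ is unavailable. Instead, I would argue sequentially. Take any sequence $\mathcal{S}(t_n)u_0^n$ with $t_n\geq T$ and $u_0^n\in\mathcal{X}$. By the previous step, after passing to a subsequence, $\phi(\mathcal{S}(t_n)u_0^n)\to w$ in $L^2(\Omega)$ and, refining once more, almost everywhere in $\Omega$. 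Since $\phi:I\to\mathbb{R}$ is a continuous strictly increasing surjection (by \eqref{eq:P1}--\eqref{eq:P2}), its inverse $\phi^{-1}:\mathbb{R}\to I$ is continuous and takes values in $(-1,1)$; therefore
\begin{equation*}
\mathcal{S}(t_n)u_0^n=\phi^{-1}\bigl(\phi(\mathcal{S}(t_n)u_0^n)\bigr)\to\phi^{-1}(w)\qquad\text{a.e. in }\Omega.
\end{equation*}
As $|\mathcal{S}(t_n)u_0^n|\leq 1$ uniformly, the dominated convergence theorem yields $\mathcal{S}(t_n)u_0^n\to\phi^{-1}(w)$ in $L^1(\Omega)$. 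Thus every sequence in $\bigcup_{t\geq T}\mathcal{S}(t)\mathcal{X}$ has an $L^1$-convergent subsequence, which is exactly relative compactness in $L^1(\Omega)$. This mirrors the passage to the limit already carried out in the proofs of Theorems~\ref{thm:existence} and \ref{thm:limit} and adapts the asymptotic-compactness argument of \cite[\S 3.5]{Feireisl}.

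I expect the main obstacle to be precisely this last transfer: compactness is available only for $\phi(u)$, and the loss of regularity at the degeneracy means it cannot be promoted to a Sobolev bound on $u$. The resolution relies crucially on the two facts already established, namely the continuity of $\phi^{-1}$ and the uniform bound $\norm{u}_{L^\infty(\Omega)}\leq 1$, which together make the dominated convergence theorem applicable and convert $L^2$-compactness of $\phi(u)$ into $L^1$-compactness of $u$.
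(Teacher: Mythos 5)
Your proof is correct and rests on the same two ingredients as the paper's: the uniform smoothing estimate \eqref{eq:fphi2} giving an $H^1$-bound on $\phi(u(t))$ that is independent of $u_0\in\mathcal{X}$, and the transfer of compactness from $\phi(u)$ back to $u$ via a.e.\ convergence, continuity of $\phi^{-1}$ and dominated convergence (the latter step is verbatim the paper's argument for the relative compactness of the sets $K(R)$). The one structural difference is that you apply \eqref{eq:fphi2} directly at every $t\geq T$, using that $t\mapsto 1+t^{-\kappa}$ is decreasing, so that $\bigcup_{t\geq T}\mathcal{S}(t)\mathcal{X}$ is contained in a single set of the form $K(C_T)$; the paper instead applies the estimate at an intermediate time $\xi\in(0,T]$, places $\mathcal{S}(\xi)u_0$ in $K(C/\xi^\kappa)$, and then propagates compactness forward through the auxiliary set $\mathcal{K}=\bigcup_{t\in[0,T]}\mathcal{S}(t)K$, which requires the joint continuity of $(t,y)\mapsto\mathcal{S}(t)y$ deduced from the $L^1$-contraction \eqref{eq:L1C}. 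Since \eqref{eq:fphi2} is valid for all $t>0$ with constants independent of the initial data, your shortcut is legitimate and in fact renders the paper's $\mathcal{K}$-construction and continuity argument unnecessary; it buys a shorter proof at no cost in generality.
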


\begin{proof}
	Let $T>0$ and $u$ be any solution of problem \eqref{eq:SP} with initial data $u_{0}\in \mathcal{X}$. By the estimate \eqref{eq:fphi2} from Theorem~\ref{thm:limit}, we have  \begin{align*}
		\lVert\phi(u(t))\rVert_{H^{1}(\Omega)}\leq \frac{C}{t^{\kappa}}\qquad \forall t>0,
	\end{align*} which implies that there exists $\xi\in (0,T]$ such that
	\begin{align}
		\lVert\phi(u(\xi))\rVert_{H^{1}(\Omega)}\leq \frac{C}{\xi^{\kappa}},\label{eq:Es1}
	\end{align}where $C>0$ is a constant independent of $u_{0}$.
	Now, we define the set
	\begin{align*}
		K(R):=\{z\in \mathcal{X}:  \lVert\phi(z)\rVert^{2}_{H^{1}(\Omega)}\leq R\},
	\end{align*}
	for any $R>0$. We observe that $\phi(z)$ is bounded in $H^{1}(\Omega)$ for $z\in K(R)$, and therefore relatively compact in $L^{1}(\Omega)$. This implies that any sequence $\{w_{k}\}_{k=1}^{\infty}$ in $K(R)$ has a subsequence, which we still denote by $w_{k}$, such that 
	\begin{equation*}
		\phi(w_{k})\to w\ \text{in}\ L^{1}(\Omega).
	\end{equation*}
Then we have that $\phi(w_{k})$ converges to $w$ a.e. in $\Omega$ along some subsequence, denoted again by $\phi(w_{k})$. Since the condition \eqref{eq:P1} implies that $\phi^{-1}$ is continuous on $\phi(I)$, we get that $w_{k}=\phi^{-1}(\phi(w_{k}))$ converges to $\phi^{-1}(w)$ a.e. in $\Omega$. Recalling that $|w_{k}|< 1$ for all $k$, we use Lebesgue's dominated convergence theorem to conclude that
    \begin{equation*}
		w_{k}\to \phi^{-1}(w)\ \text{in}\ L^{1}(\Omega), 
	\end{equation*}which implies that the set $K(R)$ is relatively compact in $L^{1}(\Omega)$. 
	Setting $K:=K(C/\xi^{\kappa})$, we observe that \eqref{eq:Es1} implies $\mathcal{S}(\xi)u_{0}=u(\xi)\in K$. Moreover, taking 
	\begin{align*}        \mathcal{K}:=\bigcup_{t\in[0,T]}\mathcal{S} (t)K, 
	\end{align*}
	we have $\mathcal{S}(T)u_{0}=\mathcal{S}(T-\xi)\mathcal{S}(\xi)u_{0}\in \mathcal{K}$. By the definition of $\mathcal{K}$, we observe that  $\mathcal{K}=Z([0,T]\times K)$, where $Z(t,y):=\mathcal{S}(t)y$. By the relative compactness of $K$, it follows that $[0,T]\times K$ is also relatively compact. Furthermore, the function $Z$ is continuous in $L^1(\Omega)$. Indeed, for any sequences $\{y_{k}\}_{k=1}^{\infty}\subset L^{1}(\Omega)$ and $\{t_{k}\}_{k=1}^{\infty}\subset [0,T]$ such that $y_{k}\to y$ and $t_{k}\to t$ as $k\to \infty$, by \eqref{eq:L1C}, we get
	\begin{align*}
		\lVert\mathcal{S}(t_{k})y_{k}-\mathcal{S}(t)y\rVert_{L^{1}(\Omega)}\leq\lVert\mathcal{S}(t_{k})y_{k}-\mathcal{S}(t_{k})y\rVert_{L^{1}(\Omega)}+\lVert\mathcal{S}(t_{k})y-\mathcal{S}(t)y\rVert_{L^{1}(\Omega)}\\\leq  e^{Lt_{k}}\lVert y_{k}-y\rVert_{L^{1}(\Omega)}+\lVert\mathcal{S}(t_{k})y-\mathcal{S}(t)y\rVert_{L^{1}(\Omega)}.
	\end{align*}
	The first term on the right-hand side converges to zero as $k\to \infty$ and the second term as well, due to the Lipschitz continuity of $\mathcal{S}$ with respect to $t$ in $L^{1}(\Omega)$, see Theorem~\ref{WPS}. Consequently, the set $\mathcal{K}$ is relatively compact in $L^{1}(\Omega)$. Therefore, $u(T)=\mathcal{S}(T)u_{0}$ has a convergent subsequence which converges to $u$. Hence, considering that $C$ is independent of $u_{0}$, the set $\mathcal{S}(T) \mathcal{X}$ is relatively compact in $L^{1}(\Omega)$.
	Since $\mathcal{X}$ is positively invariant, we have 
	\begin{equation*}
		\mathcal{S}(t)\mathcal{X}=\mathcal{S}(T)\mathcal{S}(t-T)\mathcal{X}\subset \mathcal{S}(T)\mathcal{X}
	\end{equation*}
	for $t\geq T$, which completes the proof. 
\end{proof}

Using Lemma~\ref{ACS}, we are in a position to prove Theorem~\ref{GAS}.

\begin{proof}[Proof of Theorem~\ref{GAS}:] 
	By Theorem~\ref{WPS}, the semigroup $\{\mathcal{S}(t)\}_{t\geq 0}$ is continuous in $L^1(\Omega)$ with respect to $t$. Therefore, using that $\mathcal{X}$ is a bounded absorbing set together with Lemma~\ref{ACS}, we can apply the abstract theorem \cite[ Theorem 1.1]{Temam} on the existence of
	global attractors to conclude the proof of the theorem. 
\end{proof}

Finally, we present a sketch of the proof of the existence of the global attractor  to the coupled problem \eqref{eq:CP}. As for the scalar problem \eqref{eq:SP}, we can consider smooth non-degenerate approximations and derive uniform estimates for the approximate solutions. The difficulties lie in the estimates for the degenerate equation which we provided in this section, the arguments for the semilinear equation are standard, see \cite{Efendiev2009}. 
The semigroup in the phase space $\mathcal{Y}$ is then obtained analogously as in the scalar case, see also \cite{Efendiev2009}. 
To prove the asymptotically compactness of the semigroup we use arguments from \cite{Feireisl}.

\begin{proof}[Proof of Theorem~\ref{GAC}:] Analogously as in the proof of Theorem~\ref{GAS}, we obtain the continuity of the semigroup $\{\mathcal{T}(t)\}_{t\geq 0}$ in $L^1(\Omega)\times L^1(\Omega)$ with respect to $t$ and the existence of a bounded absorbing set, which is $\mathcal{Y}$ by Theorem~\ref{WPC}. Therefore, it remains to show the asymptotic compactness of the semigroup. We only present a sketch of the proof as the arguments are the same as in the proof of Lemma~\ref{ACS}. 
	
	As in the proof of Lemma~\ref{ACS}, it can be shown that the set
	\begin{align*}
		K(R):=\{(z_{1},z_{2})\in \mathcal{Y}: ||\phi(z_{1})||_{H^{1}(\Omega)}+|| z_{2}||_{H^{1}(\Omega)} \leq R\},
	\end{align*}for any $R>0$, is relatively compact in $L^{1}(\Omega)\times L^{1}(\Omega)$. Moreover, by an analogous estimate as  \eqref{eq:fphi2} from Theorem~\ref{thm:limit} for the scalar problem, for any $T>0$, there exists $\xi\in[0,T]$ such that $\mathcal{T}(\xi)(u_{0},v_{0})=(u(\xi),v(\xi))\in K$, where $K=K(C)$ for some constant $C>0$, which is independent of $(u_{0},v_{0})$.
	Hence, it follows that $\mathcal{T}(T)(u_{0},v_{0})\in \mathcal{K}$, for any $(u_{0},v_{0})\in \mathcal{Y}$, where the set 
	\begin{align*}   		\mathcal{K}=\bigcup_{t\in[0,T]}\mathcal{T}(t)K
	\end{align*}
	is relatively compact in $L^{1}(\Omega)\times L^{1}(\Omega)$. Therefore, as in the proof of Lemma~\ref{ACS} we conclude that the set $\bigcup\limits_{t\geq T}\mathcal{T}(t)\mathcal{Y}$ is relatively compact. The abstract theorem \cite[Theorem 1.1]{Temam} on the existence of the global attractor can be applied, which completes the proof. 
\end{proof}

\section{Existence of Exponential Attractors}\label{sec:EA}

We first consider the scalar problem \eqref{eq:SP} and show the existence of an exponential attractor for the corresponding semigroup $\{\mathcal{S}(t)\}_{t\geq 0}$, i.e. we prove  Theorem~\ref{EAS}. To this end, we follow and adapt the arguments in \cite[Theorems 2.1 and 3.1]{EfendievZelik2008} to our case. However, different from \cite{EfendievZelik2008}, where first the finite fractal dimension of the global attractor was shown and then the exponential attractor was constructed from scratch, we apply an abstract existence result for exponential attractors for discrete time semigroups.  
Subsequently, the exponential attractor for the continuous time semigroup can be constructed in a standard manner, see \cite{CzajaEfendiev, EfendievZelik2008, CzajaSonner, SonnerPhD}. This significantly shortens the existence proof for the exponential attractor, and the finite fractal dimension of the global attractor is an immediate consequence. 
More specifically, the existence result for discrete time exponential attractors we will use is the following proposition which is a slight generalization of \cite[Theorem 2.1]{CzajaEfendiev}, see also \cite[Remark 3.2]{EfendievZelik2008}. Here, we give a specific estimate of the fractal dimension of the exponential attractor in terms of the constants in the relevant estimates.

\begin{proposition}\label{EfendievZelik2008}
	Let $\{S(n)\}_{n\in\mathbb{N}_0}$ be a discrete semigroup in a Banach space $E$ and $K$ be a compact subset of $E$ such that $S(n)K\subset K$ for all $n\in\mathbb{N}_0$. Assume that for every $u_*\in K$ and $0<\varepsilon\leq \varepsilon_{0}$, there exists a pair of Banach spaces $X_{u_*,\varepsilon}$ and $Y_{u_*,\varepsilon}$ such that $X_{u_*,\varepsilon}$ is compactly embedded into $Y_{u_*,\varepsilon}$ and this embedding is uniform in the sense of the Kolmogorov $\varepsilon$-entropy, i.e.  for $\rho>0$
	$$
	N_{\rho}^{Y_{u_*,\varepsilon}}(B^{X_{u_*,\varepsilon}}_1(0))\leq \widetilde N_\rho,
	$$
	where $\widetilde N_\rho$ is independent of $u_*$ and $\varepsilon\leq \varepsilon_0.$
	Moreover, assume that there exists a map $\mathcal{T}_{u_*,\varepsilon}:B^{E}_{\varepsilon}(u_*)\cap K\to X_{u_*,\varepsilon}$ such that for any $u_{1}, u_{2}\in B^{E}_{\varepsilon}(u_*)\cap K$ we have
	\begin{align*}
		\lVert \mathcal{T}_{u_*,\varepsilon}u_{1}-\mathcal{T}_{u_*,\varepsilon}u_{2}\rVert_{X_{u_*,\varepsilon}}&\leq \kappa\lVert u_{1}-u_{2}\rVert_{E},\\
		\lVert S(1)u_{1}-S(1)u_{2}\rVert_{E}&\leq \eta\lVert u_{1}-u_{2}\rVert_{E}+\mu\lVert \mathcal{T}_{u_*,\varepsilon}u_{1}-\mathcal{T}_{u_*,\varepsilon}u_{2}\rVert_{Y_{u_*,\varepsilon}},
	\end{align*}
	where the constants $\kappa>0, \eta\in[0,\frac{1}{2})$ and $\mu>0$ are independent of $u_*$ and $\varepsilon\leq \varepsilon_0.$
	Then, for any $\nu\in (0,\frac{1}{2}-\eta)$ there exists an exponential attractor $\mathcal{M}_{\nu}\subset K$ for the semigroup $\{S(n)\}_{n\in\mathbb{N}_0}$ and its fractal dimension is bounded by 
	\begin{align*}
		\text{dim}_{f}^{E}(\mathcal{M}_\nu)\leq \log_{\frac{1}{2(\eta+\nu)}}\widetilde N_{\frac{\nu}{\kappa\mu}}.
	\end{align*}  
\end{proposition}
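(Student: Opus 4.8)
The plan is to prove this discrete-time existence result by reducing it to the known abstract theorem and tracking the dimension bound explicitly. First I would construct, for each fixed $u_*\in K$ and $\varepsilon\le\varepsilon_0$, a covering of the image $S(1)(B^E_\varepsilon(u_*)\cap K)$ by balls in $E$ of a controlled radius. The mechanism is the smoothing-via-intermediate-space idea: the map $\mathcal{T}_{u_*,\varepsilon}$ sends $B^E_\varepsilon(u_*)\cap K$ into $X_{u_*,\varepsilon}$ with Lipschitz constant $\kappa$, so its image lies in a ball of radius $\kappa\varepsilon$ in $X_{u_*,\varepsilon}$. Using the uniform compact embedding $X_{u_*,\varepsilon}\hookrightarrow Y_{u_*,\varepsilon}$, this image can be covered by $\widetilde N_\rho$ balls of radius $\rho$ in $Y_{u_*,\varepsilon}$, after rescaling by $\kappa\varepsilon$. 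Feeding the resulting $Y$-proximity into the second (contraction) estimate, two points whose $\mathcal{T}$-images lie in a common $Y$-ball of radius $\kappa\varepsilon\rho$ satisfy
\begin{equation*}
\lVert S(1)u_1-S(1)u_2\rVert_E\le \eta\cdot 2\varepsilon+\mu\cdot 2\kappa\varepsilon\rho,
\end{equation*}
so choosing $\rho=\nu/(\kappa\mu)$ gives $\lVert S(1)u_1-S(1)u_2\rVert_E\le 2\varepsilon(\eta+\nu)$. Hence $S(1)(B^E_\varepsilon(u_*)\cap K)$ is covered by $\widetilde N_{\nu/(\kappa\mu)}$ balls in $E$ of radius $2\varepsilon(\eta+\nu)$, with the count independent of $u_*$ and $\varepsilon$.

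Next I would iterate this one-step covering to build the attractor. Setting the contraction factor $q:=2(\eta+\nu)<1$ (which is guaranteed by $\nu<\tfrac12-\eta$) and the branching number $N:=\widetilde N_{\nu/(\kappa\mu)}$, I cover $K$ at scale $\varepsilon_0$, apply $S(1)$ and the one-step estimate to refine the cover of $S(1)K$ to scale $q\varepsilon_0$ with the covering number multiplied by at most $N$, and repeat. After $n$ steps one obtains a cover of $S(n)K$ by at most $N_0 N^{n}$ balls of radius $q^n\varepsilon_0$, where $N_0$ is the initial covering number of $K$. The exponential attractor is then assembled in the standard way, e.g. as $\mathcal{M}_\nu=\overline{\bigcup_{n\ge0}S(n)E_n}$ where $E_n$ is the finite set of ball-centers at step $n$; positive invariance follows from the construction, compactness from $q^n\to0$ together with closedness, and exponential attraction of $K$ (hence of all bounded sets, since $K$ absorbs) from the geometric decay of the radii.

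The fractal dimension bound is obtained by counting: from the cover of $\mathcal{M}_\nu$ at scale $q^n\varepsilon_0$ by on the order of $N^n$ balls, the definition of fractal dimension yields
\begin{equation*}
\text{dim}_f^E(\mathcal{M}_\nu)\le \frac{\ln N}{\ln(1/q)}=\log_{1/q}N=\log_{\frac{1}{2(\eta+\nu)}}\widetilde N_{\frac{\nu}{\kappa\mu}},
\end{equation*}
which is exactly the claimed estimate. Since this is a minor variant of \cite[Theorem 2.1]{CzajaEfendiev}, I would invoke that theorem for the existence, positive invariance, compactness and exponential-attraction properties, and only carry out the covering-count argument above to extract the explicit dimension bound in terms of $\eta,\nu,\kappa,\mu$ and $\widetilde N$.

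The step I expect to require the most care is making the covering refinement genuinely uniform across the infinitely many local spaces $X_{u_*,\varepsilon}$ and $Y_{u_*,\varepsilon}$: the whole argument hinges on the entropy bound $\widetilde N_\rho$ and the constants $\kappa,\eta,\mu$ being independent of both the base point $u_*$ and the scale $\varepsilon$, so that the branching number $N$ and contraction factor $q$ do not deteriorate along the iteration. Verifying that the local maps $\mathcal{T}_{u_*,\varepsilon}$ patch together consistently over a cover of $K$ at each scale, and that the centers chosen at one step remain valid base points at the next, is the delicate bookkeeping that underlies the clean final bound.
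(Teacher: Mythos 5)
Your argument is correct and is precisely the covering--refinement construction underlying \cite[Theorem 2.1]{CzajaEfendiev} and \cite[Remark 3.2]{EfendievZelik2008}, which is all the paper itself relies on: the proposition is stated there without proof, only with those citations. Your one-step covering lemma (that $S(1)(B^{E}_{\varepsilon}(u_*)\cap K)$ is covered by $\widetilde N_{\nu/(\kappa\mu)}$ balls of radius $2(\eta+\nu)\varepsilon$ with centres in $K$, uniformly in $u_*$ and $\varepsilon\leq\varepsilon_0$) and the resulting count $\log_{1/q}N$ with $q=2(\eta+\nu)$ and $N=\widetilde N_{\nu/(\kappa\mu)}$ reproduce exactly the claimed dimension bound.
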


The proof of Theorem~\ref{EAS} is based on several lemmas that establish the required estimates in Proposition \ref{EfendievZelik2008} for suitable spaces $E, X_{u_*,\varepsilon}$ and $Y_{u_*,\varepsilon}$. 

\begin{remark}
	All estimates obtained from now on can be  justified by approximating the solutions of \eqref{eq:SP} and \eqref{eq:CP} by smooth solutions of non-degenerate problems, see Section~\ref{sec:GA}. 
\end{remark}

First of all, we introduce the set 
\begin{align*}
	\mathcal{C}:=\overline{\bigcup_{t\geq 1}\mathcal{S}(t)\mathcal{X}},
\end{align*}
where the closure is taken in $C^{\alpha}(\Bar{\Omega})$. By \eqref{eq:HC}, we have $\lVert \mathcal{C}\rVert_{C^{\alpha}(\Bar{\Omega})}\leq M$ and due to the construction of $\mathcal{C}$, we have $\mathcal{S}(t)\mathcal{C}\subset \mathcal{C}$ for all $t\geq 0$. Let $u_{0}\in \mathcal{C}, u_{0} \neq 0$ be arbitrary. We introduce the sets 
\begin{align}
	\begin{split}
		L(\theta)=L(\theta,u_{0}):=\{x\in \Omega:|u_{0}(x)|>\theta\},\\
		S(\theta)=S(\theta,u_{0}):=\{x\in \Omega:|u_{0}(x)|<\theta\},
	\end{split}
	\label{eq:LS}
\end{align}for every $\theta>0$, which clearly satisfy $S(\theta_{1})\subset S(\theta_{2})$ and $L(\theta_{2})\subset L(\theta_{1})$ if $\theta_{1}\leq \theta_{2}$. Moreover, by \eqref{eq:HC}, we observe that these sets are open and satisfy\begin{align*}
	\partial S(\theta)=\partial L(\theta)=\{ x\in\Omega: |u_{0}(x)|=\theta\}, \quad   \Omega=S(\theta)\cup L(\theta)\cup \partial L(\theta).
\end{align*}

Furthermore, choosing $\delta>0$ and $\theta>0$ small enough such that the sets $\partial S(\theta)$ and $\partial S(\theta+\delta)$ are nonempty, we observe that \begin{equation*}
	\lVert x-y\rVert\geq \frac{1}{M^{1/\alpha}}|u_{0}(x)-u_{0}(y)|^{1/\alpha}\geq 
	\frac{1}{M^{1/\alpha}}\big|\lvert u_{0}(x)\rvert -\lvert u_{0}(y)\rvert\big| ^{1/\alpha}
	=\left(\frac{\delta}{M}\right)^{1/\alpha},
\end{equation*}
for every $x\in \partial S(\theta+\delta)$, $y\in \partial S(\theta)$ and some constant $M>0$. Therefore, we get\begin{equation}
	d(\partial S(\theta+\delta),\partial S(\theta))\geq C_{\delta},\label{eq:posdistsets}
\end{equation}
where $ C_{\delta}\equiv \left(\frac{\delta}{M}\right)^{1/\alpha}$ and $d(A,B)=\inf\limits_{x\in A}\inf\limits_{y\in B}\lVert x-y\rVert$ denotes the standard metric distance between two sets $A$ and $B$ in $\mathbb{R}^{n}$.

\begin{lemma}\label{sign}
	Let the assumptions of Theorem \ref{EAS} hold. There exists $T>0$ and $\varepsilon>0$ such that for any solution $v$ of Problem \eqref{eq:SP} with initial data $v_{0}\in B^{L^{1}(\Omega)}_{\varepsilon}(u_{0})\cap \mathcal{C}$ 
	we have
	\begin{align}
		\begin{split}
			f_{u}(x,v(t,x))&<-\beta\quad \text{for} \ x\in S(4\theta),\ t\in [0,T],\\
			|v(t,x)|&>\theta/4\quad \text{for}\ x\in L(\theta),\ t\in [0,T],
		\end{split}
		\label{eq:ressigncond3}
	\end{align}
	uniformly with respect to $v_{0}$.
\end{lemma}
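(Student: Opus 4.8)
The plan is to reduce both pointwise inequalities in \eqref{eq:ressigncond3} to a single uniform statement: that $v(t,\cdot)$ stays close to $u_0$ in the $L^\infty(\Omega)$-norm, uniformly for $t\in[0,T]$ and for all admissible $v_0$, provided $\varepsilon$ and $T$ are chosen small. Once we control $\|v(t)-u_0\|_{L^\infty(\Omega)}$ by a small quantity $\sigma$, the second line of \eqref{eq:ressigncond3} follows because on $L(\theta)$ we have $|u_0|>\theta$, so $|v(t,x)|\ge|u_0(x)|-\sigma>\theta-\sigma>\theta/4$; and the first line follows because on $S(4\theta)$ we have $|u_0|<4\theta$, so $|v(t,x)|\le 4\theta+\sigma$ is small, and the sign condition on $f_u$ near $0$ in \eqref{eq:signcondf} forces $f_u(x,v(t,x))<-\beta$. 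The two ingredients needed are: (i) an interpolation inequality turning the $L^1$-closeness of $v_0$ and $u_0$ into $L^\infty$-closeness, and (ii) a uniform Hölder-in-time bound controlling $v(t,\cdot)-v_0$ for small $t$.

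For (i), I would use that both $u_0$ and $v_0$ lie in $\mathcal{C}$, hence $\|u_0\|_{C^\alpha(\overline\Omega)},\|v_0\|_{C^\alpha(\overline\Omega)}\le M$ by \eqref{eq:HC}. A standard ball-covering argument gives the interpolation estimate
\[
\|w\|_{L^\infty(\Omega)}\le C(M)\,\|w\|_{L^1(\Omega)}^{\alpha/(\alpha+n)}\qquad\text{whenever }\|w\|_{C^\alpha(\overline\Omega)}\le 2M:
\]
indeed, if $|w(x_0)|=a$ then $|w|\ge a/2$ on a ball of radius $\sim(a/M)^{1/\alpha}$, so $\|w\|_{L^1}\gtrsim a^{1+n/\alpha}M^{-n/\alpha}$. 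Applying this to $w=u_0-v_0$ with $\|u_0-v_0\|_{L^1(\Omega)}\le\varepsilon$ yields $\|u_0-v_0\|_{L^\infty(\Omega)}\le C(M)\,\varepsilon^{\alpha/(\alpha+n)}$, which can be made arbitrarily small by shrinking $\varepsilon$.

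For (ii), I would exploit that $\mathcal{C}$ is positively invariant, so $v(t)\in\mathcal{C}$ for all $t\ge0$, and that every point of $\mathcal{C}$ arises (up to the $C^\alpha$-closure) as $\mathcal{S}(s)w$ with $s\ge1$. Writing $v(t,x)=(\mathcal{S}(t+s)w)(x)$ and applying \eqref{eq:HC} on a compact time interval bounded away from $0$ (possible since $t+s\ge1$), I obtain the uniform joint Hölder bound $|v(t,x)-v_0(x)|\le M\,t^\alpha$ for $t\in[0,T]$, with $M$ independent of $v_0$; this estimate then passes to the closure. Combining (i) and (ii) gives $\|v(t)-u_0\|_{L^\infty(\Omega)}\le C(M)\varepsilon^{\alpha/(\alpha+n)}+M\,T^\alpha=:\sigma$ for all $t\in[0,T]$, uniformly in $v_0\in B^{L^1(\Omega)}_\varepsilon(u_0)\cap\mathcal{C}$.

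It remains to fix the constants. From \eqref{eq:signcondf}, $f_u(\cdot,0)$ is continuous and strictly negative on the compact set $\overline\Omega$, so there is $\beta>0$ with $f_u(x,0)\le-2\beta$, and by uniform continuity a radius $\rho>0$ with $f_u(x,z)\le-\beta$ whenever $|z|\le\rho$. Requiring $\theta$ small enough that $8\theta\le\rho$ (consistent with the earlier choice of $\theta$), I then choose $\varepsilon$ and $T$ so small that $\sigma\le\min\{\theta/2,\rho/2\}$; this makes $|v(t,x)|\le4\theta+\sigma\le\rho$ on $S(4\theta)$ and $|v(t,x)|>\theta-\sigma>\theta/4$ on $L(\theta)$, which is exactly \eqref{eq:ressigncond3}. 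I expect the main obstacle to be ingredient (ii): justifying a Hölder-in-time bound that is uniform down to $t=0$ and across the $C^\alpha$-closure defining $\mathcal{C}$, since \eqref{eq:HC} is only stated on time intervals bounded away from $0$; the shift trick $v(t)=\mathcal{S}(t+s)w$ is the device that circumvents this, and care is needed to ensure the resulting constants are independent of the particular $v_0$.
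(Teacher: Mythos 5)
Your proposal is correct and follows essentially the same route as the paper: fix $\theta,\beta$ from the sign condition \eqref{eq:signcondf}, use the H\"older-in-time bound from \eqref{eq:HC} (valid down to $t=0$ because points of $\mathcal{C}$ lie on trajectories that have already run for time $\geq 1$) to control the drift over $[0,T]$, and use the $L^1$--$C^\alpha$ interpolation inequality to convert $L^1$-closeness into uniform closeness. The only cosmetic difference is that you interpolate at time zero and split $v(t)-u_0=(v(t)-v_0)+(v_0-u_0)$, whereas the paper propagates the $L^1$-distance forward via the contraction \eqref{eq:L1C} and interpolates on $v(t)-u(t)$; both give the same uniform smallness, and your explicit treatment of the shift device for extending \eqref{eq:HC} to $t=0$ addresses a point the paper leaves implicit.
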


\begin{proof}
	Due to the \eqref{eq:signcondf} and \eqref{eq:HC}, we can fix $\theta, \beta>0$ such that 
	\begin{align}
		f_{u}(\cdot,u)<-3\beta<0 \quad \text{in}\ \Omega\quad \forall |u|<5\theta. \label{eq:ressigncond}
	\end{align}
	Applying Proposition A.2 in \cite{EfendievZelik2008} with $V=S(4\theta)$, by virtue of \eqref{eq:posdistsets} with sufficiently small $\delta>0$, we can choose a cut-off function $\psi\in C^{\infty}(\mathbb{R}^{n})$ such that $\psi\geq 0$, \begin{equation}
		\psi(x)=\left\{ 
		\begin{array}{c}
			1, \ x\in S(4\theta), \\ 
			0,\ x\in L(5\theta),%
		\end{array}%
		\right. \label{eq:cutoffdef}
	\end{equation}
	and
	\begin{equation}
		\lVert\psi\rVert_{C^{k}(\overline{\Omega})}\leq C_{k},\ k\in \mathbb{N},\label{eq:cutoffbdd}
	\end{equation}
	where the constants $C_{k}\geq 0$ only depend on $M,\alpha$ and $k$, and are independent of $u_{0}$.
	
	Recall that by \eqref{eq:HC}, the solution $u(t):=\mathcal{S}(t)u_{0}$ belongs to $C^{\alpha}([0,T]\times \overline{\Omega})$. Hence, for any $T>0$ we have 
	\begin{equation*}
		|u(t,x)|\leq |u(t,x)-u_{0}(x)|+|u_{0}(x)|< MT^{\alpha}+5\theta\quad \text{for}\ x\in S(5\theta),\ t\in [0,T],
	\end{equation*}
	which, together with \eqref{eq:ressigncond} and the continuity of $f_{u}$, implies that
	\begin{equation*}
		f_{u}(x,u(t,x))<-3\beta+K\qquad \text{for}\  x\in S(5\theta),\ t\in [0,T],
	\end{equation*}
	for some constant $K$ depending only on $M,\alpha$ and $T$. Moreover, $K\leq \beta$ if  $T>0$ is sufficiently small. On the other hand, for $x\in L(\theta)$, we have
	\begin{equation*}
		|u(t,x)|\geq -|u(t,x)-u_{0}(x)|+|u_{0}(x)|>-MT^{\alpha}+\theta\qquad \text{for}\ t\in [0,T].
	\end{equation*}
	Therefore, choosing $T$ sufficiently small such that $K\leq\beta$ and 
	$MT^{\alpha}<\theta/2$, we obtain
	\begin{align}
		\begin{split}
			f_{u}(x,u(t,x))&<-2\beta\qquad \text{for}\ x\in S(5\theta),\ t\in [0,T],\\
			|u(t,x)|&>\theta/2\qquad\  \text{for}\ x\in L(\theta),\ t\in [0,T].
		\end{split}
		\label{eq:ressigncond2}
	\end{align}
	Note that $T>0$ depends only on $M, \alpha, \beta$ and $\theta$.
	On the other hand, let $v$ be a solution with initial data $v_{0}\in B^{L^{1}(\Omega)}_{\varepsilon}(u_{0})\cap \mathcal{C}, \varepsilon>0$. Then, using the following interpolation inequality (see \cite{EfendievZelik2008})
	\begin{equation}\label{eq:interpol}
		\lVert \omega\rVert_{C(\overline{\Omega})}\leq C_0\lVert \omega\rVert^{\gamma}_{L^{1}(\Omega)}\lVert \omega\rVert^{1-\gamma}_{C^{\alpha}(\Bar{\Omega})}, \qquad \omega \in C^{\alpha}(\Bar{\Omega}),
	\end{equation}
	for some $\gamma\in (0,1)$ and $C_0\geq0$, together with \eqref{eq:L1C} and \eqref{eq:HC}, we obtain 
	for $ x\in \Omega,\ t\in[0,T]$
	\begin{align*}
		|v(t,x)-u(t,x)|\leq \lVert v(t)-u(t)\rVert_{C(\overline{\Omega})}&\leq C_0\lVert v(t)-u(t)\rVert^{\gamma}_{L^{1}(\Omega)}\lVert v(t)-u(t)\rVert^{1-\gamma}_{C^{\alpha}(\Bar{\Omega})}\\
		&\leq C_0(2M)^{1-\gamma}e^{\gamma LT}\lVert v_{0}-u_{0}\rVert^{\gamma}_{L^{1}(\Omega)}\leq C\varepsilon^{\gamma}.
	\end{align*}
	Then, by $\eqref{eq:ressigncond2}_{2}$, it follows that 
	\begin{equation*}
		|v(t,x)|\geq |u(t,x)|-|v(t,x)-u(t,x)|>\theta/2-C\varepsilon^{\gamma}\quad \text{for}\ x\in L(\theta),\ t\in[0,T].
	\end{equation*} 
	Hence, for all sufficiently small $\varepsilon$ independent of $u_{0}\in \mathcal{C}$, considering $\eqref{eq:ressigncond2}_{1}$, we obtain \eqref{eq:ressigncond3}.
\end{proof}

Next, we show a smoothing property for the difference of two solutions of \eqref{eq:SP}. We first consider the domain $L(\theta)$, where the equation $\eqref{eq:SP}_{1}$ is non-degenerate. The following lemma is analogous to the standard interior regularity estimates for linear parabolic problems.

\begin{lemma}\label{smoothinglemma}
	Let the assumptions of Theorem \ref{EAS} hold and
	$u_{1},u_{2}$ be two solutions of \eqref{eq:SP} with initial data $u_{i}(0)\in B^{L^{1}(\Omega)}_{\varepsilon}(u_{0})\cap \mathcal{C}, i=1,2$. Then, for any $t_0\in(0,T)$ we have
	\begin{equation}
		\lVert u_{1}-u_{2}\rVert_{C^{\alpha}([t_{0},T]\times \overline{L(3\theta)})}\leq C_{t_{0}}\lVert u_{1}(0)-u_{2}(0)\rVert_{L^{1}(\Omega)},\label{eq:smoothing}
	\end{equation}
	where $T>0$ and $\varepsilon>0$ are as in Lemma~\ref{sign} and the constant $C_{t_{0}}>0$ depends on $t_{0}$, but is independent of $\varepsilon, u_0$ and $u_1,u_2$. 
\end{lemma}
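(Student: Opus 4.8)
The plan is to exploit that on the region $L(2\theta)$ both solutions stay uniformly away from the degeneracy $u=0$, so the equation governing their difference is uniformly parabolic there, and then to combine standard interior parabolic regularity with the $L^{1}$-contraction \eqref{eq:L1C}. Writing $w=u_{1}-u_{2}$ and subtracting the two copies of \eqref{eq:SP}$_{1}$, the difference solves
\[
\partial_{t}w=\Delta(a\,w)+b\,w\qquad\text{on }(0,T]\times L(2\theta),
\]
where $a(x,t)=\int_{0}^{1}\phi'\big(\tau u_{1}+(1-\tau)u_{2}\big)\,d\tau$ and $b(x,t)=\int_{0}^{1}f_{u}\big(x,\tau u_{1}+(1-\tau)u_{2}\big)\,d\tau$, so that $|b|\le L$. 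By Lemma~\ref{sign} we have $|u_{i}(t,x)|>\theta/4$ for $x\in L(\theta)\supset L(2\theta)$ and $t\in[0,T]$, while the a priori bound from Theorem~\ref{thm:existence} gives $|u_{i}|\le 1-\delta$; hence $u_{i}$ takes values in the compact set $\{\,\theta/4\le|z|\le 1-\delta\,\}\subset I\setminus\{0\}$. Since $\phi\in C^{2}(I)$ by \eqref{eq:signcondf} and $\phi'>0$ away from $0$ (by \eqref{eq:P3}), this yields $0<m_{\theta}\le a\le M_{\theta}<\infty$ on $L(2\theta)$, with $m_{\theta},M_{\theta}$ depending only on $\theta,\delta,\phi$ and independent of $u_{0}$ and of $u_{1},u_{2}$.

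Next I would localize. The Hölder bound \eqref{eq:HC}, exactly as in \eqref{eq:posdistsets}, gives a fixed spatial buffer $d\big(\overline{L(3\theta)},\partial L(2\theta)\big)\ge C_{\theta}\equiv(\theta/M)^{1/\alpha}>0$. Choosing a radius $r=\tfrac12\min\{C_{\theta},\sqrt{t_{0}}\}$, every parabolic cylinder $Q_{r}(s,x)=(s-r^{2},s]\times B_{r}(x)$ with $(s,x)\in[t_{0},T]\times\overline{L(3\theta)}$ is contained in $(0,T]\times L(2\theta)$, where the equation for $w$ is uniformly parabolic; the factor $\sqrt{t_{0}}$ keeps the cylinders away from the initial time and is responsible for the dependence of $C_{t_{0}}$ on $t_{0}$.

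On each such cylinder I would apply standard interior parabolic regularity for the linear uniformly parabolic equation for $w$, namely the local $L^{\infty}$–$L^{1}$ smoothing estimate together with the De Giorgi–Nash–Moser Hölder estimate, to obtain $\lVert w\rVert_{C^{\alpha'}(Q_{r/2})}\le C\lVert w\rVert_{L^{1}(Q_{r})}$ for some $\alpha'>0$ and $C=C(r,m_{\theta},M_{\theta},L,n)$. The \emph{linearity} of the equation is what makes the right-hand side linear in the $L^{1}$-norm of $w$, rather than the square-root dependence a crude $L^{\infty}$-interpolation of $L^{2}$ against $L^{1}$ would produce. Patching finitely many such local estimates over a cover of $[t_{0},T]\times\overline{L(3\theta)}$ and bounding
\[
\lVert w\rVert_{L^{1}((0,T)\times L(2\theta))}\le\int_{0}^{T}\lVert w(t)\rVert_{L^{1}(\Omega)}\,dt\le C_{T}\lVert u_{1}(0)-u_{2}(0)\rVert_{L^{1}(\Omega)}
\]
by \eqref{eq:L1C} then yields \eqref{eq:smoothing}, with $C_{t_{0}}$ independent of $\varepsilon,u_{0},u_{1},u_{2}$.

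The main obstacle is twofold. First, the principal part is $\Delta(a\,w)$ rather than $\mathrm{div}(a\nabla w)$, and rewriting it in divergence form produces the term $w\,\nabla a$, with $\nabla a$ not controlled in $L^{\infty}$ uniformly, since a priori $u_{i}$ is only $C^{\alpha}$. I would resolve this by carrying out all computations on the smooth non-degenerate approximations (as justified by the Remark preceding this lemma), where $a$ is smooth, and by first bootstrapping $u_{i}$ to $C^{1,\gamma}_{\mathrm{loc}}$ in the interior via interior $W^{2,1}_{p}$ estimates; this gives $\nabla a\in L^{\infty}$ with bounds depending only on $M$, $\lVert f\rVert_{L^{\infty}}$ and the ellipticity constants, placing the equation for $w$ into a genuine uniformly parabolic divergence form with bounded coefficients and constants uniform in $R$. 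Second, the exponent $\alpha'$ from the Hölder estimate need not equal the prescribed $\alpha$; this is handled by a further bootstrap (gradient-Hölder/$W^{2,1}_{p}$ estimates using $\phi\in C^{2}$) upgrading $w$ to $C^{1,\gamma}_{\mathrm{loc}}$, which embeds continuously into $C^{\alpha}$ for every $\alpha\in(0,1)$, so the bound can be stated with the exponent $\alpha$ of \eqref{eq:HC}.
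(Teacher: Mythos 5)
Your overall strategy agrees with the paper's up to the point where the difference equation must be handled: both arguments restrict to $L(2\theta)$ where Lemma~\ref{sign} makes the problem uniformly parabolic, both first upgrade the regularity of the individual solutions $u_i$ there (you via a $C^{1,\gamma}_{\mathrm{loc}}$ bootstrap, the paper via the anisotropic bound $\lVert u_i\rVert_{W^{(1,2),r}((t_0/2,T)\times L(2\theta))}\leq C_{r,t_0}$ obtained by passing to $v=\phi(u_i)$, which solves $v_t=\phi'(u_i)\Delta v+\phi'(u_i)f$ with H\"older, uniformly elliptic coefficient — note that applying $W^{2,1}_p$ estimates directly to the quasilinear equation for $u_i$ is not straightforward, so you should route your preliminary bootstrap through $v=\phi(u_i)$ as well), and both close with the $L^1$-contraction \eqref{eq:L1C}. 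Where you genuinely diverge is the treatment of the principal part $\Delta(l_1w)$: you rewrite it as $\mathrm{div}(l_1\nabla w+w\nabla l_1)$ and invoke De Giorgi--Nash--Moser, whereas the paper substitutes $Z=l_1w$, so that the equation becomes the \emph{non-divergence} equation $\partial_tZ=l_1\Delta Z+(l_2+\partial_tl_1/l_1)Z$, whose coefficients require no spatial derivative of $l_1$ at all — only $\partial_tl_1\in L^r$, which is exactly what the anisotropic estimate on $u_i$ supplies.

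The genuine gap is in your final exponent upgrade. De Giorgi--Nash--Moser yields $w\in C^{\alpha'}$ for a small exponent $\alpha'$ determined by the ellipticity ratio and the dimension, and there is no reason to have $\alpha'\geq\alpha$, where $\alpha$ is the exponent prescribed by \eqref{eq:HC} and appearing in \eqref{eq:smoothing}. Your proposed remedy — bootstrapping $w$ to $C^{1,\gamma}_{\mathrm{loc}}$ via ``gradient-H\"older/$W^{2,1}_p$ estimates'' — does not go through under the standing hypothesis $\phi\in C^2(I)$ from \eqref{eq:signcondf}: putting $\Delta(l_1w)=l_1\Delta w+2\nabla l_1\cdot\nabla w+w\Delta l_1$ into non-divergence form requires $\Delta l_1$, hence $\phi\in C^3$; and divergence-form $C^{1,\gamma}$ (Schauder) theory for $\partial_tw=\mathrm{div}(l_1\nabla w+w\nabla l_1)+l_2w$ requires the flux coefficient $\nabla l_1=\int_0^1\phi''(su_1+(1-s)u_2)\,(s\nabla u_1+(1-s)\nabla u_2)\,ds$ to be H\"older, which fails in general because $\phi''$ is merely continuous. (Interpolating your $L^\infty$--$L^1$ bound against the uniform $C^\alpha$ bound \eqref{eq:HC} does not rescue this either, since it produces a sublinear power of $\lVert w(0)\rVert_{L^1(\Omega)}$.) The clean way to reach the exponent $\alpha$ is the paper's device: with $Z=l_1w$ the $L^q$-interior estimate gives $\lVert Z\rVert_{W^{(1,2),q}((t_0,T)\times L(3\theta))}\leq C\lVert w\rVert_{L^1(\Omega_T)}$ for every $q>2$, and choosing $q$ large enough that $W^{(1,2),q}\hookrightarrow C^\alpha$ delivers \eqref{eq:smoothing} after dividing by $l_1$, which is H\"older and bounded below on $L(2\theta)$. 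If you replace your last bootstrap by this substitution (or, alternatively, weaken the lemma to an unspecified exponent $\alpha'>0$, which would in fact still suffice for the entropy argument in the proof of Theorem~\ref{EAS}), your argument closes.
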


\begin{proof}    
	We first show that for any $r>0$ and $t_0\in(0,T)$ the functions $u_{1}$ and $u_{2}$ satisfy 
	\begin{equation}
		\lVert u_{i}\rVert_{W^{(1,2),r}((t_{0}/2,T)\times L(2\theta))}\leq C_{r,t_0},\qquad  i=1,2, \label{eq:anis.es.ui}
	\end{equation}
	where $W^{(1,2),r}(D)$ denotes the anisotropic Sobolev space consisting of functions whose $t$-derivatives up to order $1$ and $x$-derivatives up to order $2$ belong to $L^{r}(D)$. The constant $C_{r,t_0}$ depends on $r$ and $t_0$, but is independent of $u_{0},\varepsilon$ and the trajectories $u_{1}$ and $u_{2}$. It suffices to verify \eqref{eq:anis.es.ui} for $u=u_{1}$ since the proof is analogous for $u=u_{2}$. To this end, we introduce $v(t,x):=\phi(u(t,x))$. Then, considering $\eqref{eq:ressigncond3}_{2}$ and that $\phi(u)$ is non-degenerate if $|u|>\theta/4>0$, the function $v$ solves the equation
	\begin{equation}
		v_{t}=a\Delta v+af(\cdot,u)\qquad \text{in}\ (0,T]\times L(\theta),\label{eq:veq}
	\end{equation}
	where $a(t,x):=\phi'(u(t,x))$. Furthermore, by \eqref{eq:HC}, the continuity of $\phi'$ and $f$, the function $a$ is uniformly Hölder continuous with respect to $t$ and $x$ and the function $af(\cdot,u)$ is uniformly bounded in $L^{\infty}(\Omega_{T})$. Additionally, due to \eqref{eq:P3} and $\eqref{eq:ressigncond3}_{2}$, we have \begin{equation}
		a(t,x)\geq C \qquad \forall (t,x)\in [0,T]\times L(\theta),\label{eq:lowerboundofa}
	\end{equation}
	where the constant $C$ is also independent of $u_{0}$ and $u$. Hence,  we can use an $L^{r}$-interior regularity estimate for the solution of the linear non-degenerate equation \eqref{eq:veq} (see \cite[Proposition A.4]{EfendievZelik2008}), and obtain, by \eqref{eq:posdistsets}, for any $r>2$ and $t_0\in(0,T)$,
	\begin{equation}
		\lVert v\rVert_{W^{(1,2),r}((t_{0}/2,T)\times L(2\theta))}\leq C_{r, t_0}(\rVert af(\cdot,u)\rVert_{L^{r}((0,T)\times L(\theta))}+\lVert v\rVert_{L^{1}((0,T)\times L(\theta))})\leq C'_{r,t_0},\label{eq:anis.es.v}
	\end{equation}
	for some constant $C_{r,t_0}'\geq 0.$
	Then, taking into account that $u = \phi^{-1}(v)$, the fact that $\phi\in C^{2}(I)$ and non-degenerate away from zero, we find, by \eqref{eq:lowerboundofa}-\eqref{eq:anis.es.v}, that
	\begin{align*}
		&\lVert u\rVert_{W^{(1,2),r}((t_{0}/2,T)\times L(2\theta))}^r=\lVert\phi^{-1}(v)\rVert_{W^{(1,2),r}((t_{0}/2,T)\times L(2\theta))}^r\\ 
		&= \lVert\phi^{-1}(v)\rVert_{L^{r}((t_{0}/2,T)\times L(2\theta))}^r+\left\Vert\frac{v_{t}}{\phi'(u)}\right\Vert_{L^{r}((t_{0}/2,T)\times L(2\theta))}^r
		+\sum_{i=1}^n\left\Vert\frac{v_{x_i}}{\phi'(u)}\right\Vert_{L^{r}((t_{0}/2,T)\times L(2\theta))}^r\\&+\sum_{i,j=1}^n\left(\left\Vert\frac{\phi''(u)v_{x_i}v_{x_j}}{(\phi'(u))^{2}}\right\Vert_{L^{r}((t_{0}/2,T)\times L(2\theta))}^r+\left\Vert\frac{v_{x_ix_j}}{\phi'(u)}\right\Vert_{L^{r}((t_{0}/2,T)\times L(2\theta))}^r\right)\\
		&\leq C\left(1+\lVert v\rVert^r_{W^{(1,2),r}((t_{0}/2,T)\times L(2\theta))}+\lVert\nabla v\rVert^{2r}_{L^{2r}((t_{0}/2,T)\times L(2\theta))}\right)\leq C_{r,t_0}'',
	\end{align*}
	for some constant $C_{r}''>0$, which yields the estimate \eqref{eq:anis.es.ui} for $u=u_{1}$.
	
	To prove estimate \eqref{eq:smoothing}, we consider $w=u_{1}-u_{2}$ in $[t_0/2,T]\times L(2\theta)$ which satisfies
	\begin{align}
		w_{t}&=\Delta(l_{1}(t)w)+l_{2}(t)w\qquad \text{in}\ (t_0/2,T]\times L(2\theta),
		\label{eq:differenceprob}
	\end{align}
	where\begin{align}
		\begin{split}
			l_{1}(t)&:=\int_{0}^{1}\phi'(su_{1}(t)+(1-s)u_{2}(t))ds, \\
			l_{2}(t)&:=\int_{0}^{1}f_{u}(\cdot,su_{1}(t)+(1-s)u_{2}(t))ds.
		\end{split}
		\label{eq:l1l2def}
	\end{align}
	First, we use that $\phi\in C^{2}(I)$ and \eqref{eq:HC} to conclude that $l_{1}$ satisfies
	\begin{equation}
		||l_{1}||_{C^{\alpha}([0,T]\times\Bar{\Omega})}\leq C,\label{eq:l1es}
	\end{equation}
	where  $C$ is independent of $u_{1}$ and $u_{2}$. Furthermore, by \eqref{eq:anis.es.ui}, we have
	\begin{equation}
		||\partial_{t}l_{1}||_{L^{r}((t_{0}/2,T)\times L(2\theta))}\leq C\sum_{i=1}^{2}||\partial_{t}u_{i}||_{L^{r}((t_{0}/2,T)\times L(2\theta))}\leq C_{r,t_0}''',\label{eq:timederl1}
	\end{equation}
	for some constant $C_{r}'''>0$ independent of $u_{0}, u_{1}$ and $u_{2}$.  Also, by $\eqref{eq:ressigncond3}_{2}$ and \eqref{eq:P3}, we have
	\begin{equation}
		l_{1}(t,x)\geq C
		>0 \qquad \text{for}\ (t,x)\in [t_{0}/2,T]\times L(2\theta),\label{eq:lowerboundl1}
	\end{equation}
	where the constant $C>0$ is independent of $u_{0}, u_{1}$ and $u_{2}$.
	
	Now we introduce $Z= l_{1}w$ which, by $\eqref{eq:differenceprob}$, solves
	\begin{equation}
		\partial_{t}Z=b(t,x)\Delta Z+l(t,x)Z\qquad \text{in}\ (t_{0}/2,T]\times L(2\theta),\label{eq:eqZ}
	\end{equation}
	where \begin{align*}
		b(t,x):=l_{1}(t,x)\ \ \text{and} \ \ l(t,x):=l_{2}(t,x)+\frac{\partial_{t}l_{1}(t,x)}{l_{1}(t,x)}.
	\end{align*}
	We observe that $l_{2}$ is uniformly bounded in $L^{\infty}(\Omega_{T})$. Hence, considering \eqref{eq:l1es}-\eqref{eq:lowerboundl1} we now apply the $L^{q}$-interior regularity estimate (see \cite{EfendievZelik2008}, Proposition A.4 and Corollary A.1) for \eqref{eq:eqZ}, and conclude that for any $q>2$
	\begin{equation}
		||Z||_{W^{(1,2),q}((t_{0},T)\times L(3\theta))}\leq C_{q,t_0}||Z||_{L^{1}((t_{0}/2,T)\times L(2\theta))}\leq C_{q,t_0}'||w||_{L^{1}(\Omega_{T})},\label{eq:anis.es.Z}
	\end{equation}
	If we fix $q$ so large such that the embedding $W^{(1,2),q}((t_{0},T)\times L(3\theta))\hookrightarrow C^{\alpha}([t_{0},T]\times \overline{L(3\theta)})$ is continuous, it follows from the definition of $Z$, \eqref{eq:l1es}, \eqref{eq:lowerboundl1} and \eqref{eq:anis.es.Z} that
	\begin{align*}
		||w||_{C^{\alpha}([t_{0},T]\times \overline{L(3\theta)})}
		=\left\Vert \frac{Z}{l_{1}}\right\Vert_{C^{\alpha}([t_{0},T]\times\overline{L(3\theta)})}
		&\leq C||Z||_{C^{\alpha}([t_{0},T]\times \overline{L(3\theta)})}
		\\ & \leq C||Z||_{W^{(1,2),q}((t_{0},T)\times L(3\theta))}
		\\ & \leq C_{q,t_0}||w||_{L^{1}(\Omega_{T})}\leq C_{q,t_0}e^{LT}||w(0)||_{L^{1}(\Omega)},
	\end{align*}
	which implies \eqref{eq:smoothing}.
\end{proof}

The following lemma yields a contraction property for the difference of two solutions. 
\begin{lemma}\label{contractionlemma}
	Let the assumptions of Theorem \ref{EAS} hold and $T>0$ and $\varepsilon>0$ be as in Lemma~\ref{sign}.
	For any two solutions $u_{1}, u_{2}$  of \eqref{eq:SP} with initial data $u_{i}(0)\in B^{L^{1}(\Omega)}_{\varepsilon}(u_{0})\cap \mathcal{C}, i=1,2$, there exists $t_*\in (0,T)$ such that the estimate
	\begin{equation}
		||u_{1}(T)-u_{2}(T)||_{L^{1}(S(4\theta))}\leq \frac{1-\delta}{2}||u_{1}(0)-u_{2}(0)||_{L^{1}(\Omega)}+C||u_{1}-u_{2}||_{L^{1}([t^*,T]\times L(4\theta))}\label{eq:contract}
	\end{equation}
	holds, where $\delta\in (0,1)$
	and $C>0$ are independent of $\varepsilon$, $u_{0}$ and $u_1,u_2$.     
\end{lemma}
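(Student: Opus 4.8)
The plan is to derive a localized $L^1$ estimate for the difference $w:=u_1-u_2$ on the degenerate region $S(4\theta)$, drawing the contraction from the reaction sign condition of Lemma~\ref{sign} and deferring the diffusive flux across $\partial S(4\theta)$ to the non-degenerate region $L(4\theta)$, where the smoothing estimate of Lemma~\ref{smoothinglemma} applies. First I would record the equation for $w$: exactly as in \eqref{eq:differenceprob}--\eqref{eq:l1l2def}, but now on all of $\Omega$ in the weak sense, $w$ satisfies $w_t=\Delta(l_1w)+l_2w$, where $l_1=\int_0^1\phi'(su_1+(1-s)u_2)\,ds\geq 0$ and $l_2=\int_0^1 f_u(\cdot,su_1+(1-s)u_2)\,ds$. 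Two identities will be used repeatedly: $l_1w=\phi(u_1)-\phi(u_2)$ (by the fundamental theorem of calculus) and $\text{sgn}(w)=\text{sgn}(l_1w)$.

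Next I would test the equation with $\psi\,\text{sgn}(w)$, where $\psi$ is the cut-off from \eqref{eq:cutoffdef}--\eqref{eq:cutoffbdd} (so $\psi\equiv 1$ on $S(4\theta)$, $\psi\equiv 0$ on $L(5\theta)$, $0\le\psi\le 1$, $\|\psi\|_{C^2}\le C$), rigorously via a smooth approximation $\text{sgn}_\sigma$ and the passage $\sigma\to 0$ (and, per the preceding remark, via the non-degenerate approximations of Section~\ref{sec:GA}). The time term gives $\frac{d}{dt}\int_\Omega\psi|w|$. For the diffusion term I would use a Kato-type inequality: integrating by parts twice, discarding the nonnegative level-set dissipation, and using $w=0$ on $\partial\Omega$ (so $\phi(u_1)-\phi(u_2)=0$ there and the boundary terms vanish), it is bounded by $\int_\Omega\Delta\psi\,|\phi(u_1)-\phi(u_2)|$. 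Since $\Delta\psi$ is supported in the transition layer $\{4\theta\le|u_0|\le 5\theta\}\subset L(4\theta)$, where $\phi$ is Lipschitz because $|u_i|$ is bounded away from $0$ (by $\eqref{eq:ressigncond3}_2$) and from $1$ (by $\|u_i\|_\infty\le 1-\delta$), this is controlled by $C\|w\|_{L^1(L(4\theta))}$. For the reaction term, the sign condition of Lemma~\ref{sign} gives $l_2\le-\beta$ on $\text{supp}\,\psi$, hence $\int_\Omega\psi l_2|w|\le-\beta\int_\Omega\psi|w|$. Collecting these,
$$\frac{d}{dt}\int_\Omega\psi|w|\le-\beta\int_\Omega\psi|w|+C\|w\|_{L^1(L(4\theta))}.$$

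Integrating over $[0,T]$ and using $\psi\equiv 1$ on $S(4\theta)$ together with $0\le\psi\le 1$ yields
$$\|w(T)\|_{L^1(S(4\theta))}\le e^{-\beta T}\|w(0)\|_{L^1(\Omega)}+C\int_0^T\|w(t)\|_{L^1(L(4\theta))}\,dt.$$
To match the stated form I would split the time integral at some $t_*\in(0,T)$: on $[0,t_*]$, where the smoothing estimate is not yet available, I would bound $\|w(t)\|_{L^1(L(4\theta))}\le\|w(t)\|_{L^1(\Omega)}\le e^{Lt}\|w(0)\|_{L^1(\Omega)}$ via the $L^1$-contraction \eqref{eq:L1C}, which contributes a factor $Ct_*e^{LT}$ to the coefficient of $\|w(0)\|_{L^1(\Omega)}$; the remaining integral over $[t_*,T]$ is kept as $C\|w\|_{L^1([t_*,T]\times L(4\theta))}$.

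The main obstacle is to drive the resulting coefficient of $\|w(0)\|_{L^1(\Omega)}$ below $\tfrac12$, i.e. to reach the contraction constant $\frac{1-\delta}{2}$. The delicate point is that $T$ is constrained to be small in Lemma~\ref{sign} (so that solutions stay in the range $|u|<5\theta$ where $f_u<0$), so that $e^{-\beta T}$ by itself is close to $1$; the estimate must therefore be organized so that the full dissipative structure — the reaction decay on $S(4\theta)$ and the confinement of all flux into $L(4\theta)$ — together with the a priori bound $\|u_i\|_\infty\le 1-\delta$ and the choice of $t_*$ small, forces the net coefficient strictly below $\tfrac12$. Once this is secured, every term on the right-hand side other than $\frac{1-\delta}{2}\|w(0)\|_{L^1(\Omega)}$ lives on $L(4\theta)$ and is, in the proof of Theorem~\ref{EAS}, absorbed by the smoothing estimate \eqref{eq:smoothing}, producing exactly the compact correction term required by Proposition~\ref{EfendievZelik2008}.
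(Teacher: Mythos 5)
Your proposal follows the paper's proof essentially step for step: test \eqref{eq:differenceprob} with $\psi\,\mathrm{sgn}(w)=\psi\,\mathrm{sgn}(\psi l_1 w)$, eliminate the diffusion by a Kato-type inequality so that only terms supported on $\operatorname{supp}\nabla\psi\subset L(4\theta)$ survive, use the sign condition of Lemma~\ref{sign} to generate the damping $-\beta\langle\psi,|w|\rangle$, and conclude by Gronwall; the organization of the integration by parts (your two-fold integration by parts against $|\phi(u_1)-\phi(u_2)|=l_1|w|$ versus the paper's identity $\psi\Delta(l_1w)=\Delta(\psi l_1w)-2\nabla\psi\cdot\nabla(l_1w)-l_1w\Delta\psi$ followed by $\langle\Delta v,\mathrm{sgn}(v)\rangle\le0$) is a cosmetic difference. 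The one substantive divergence is the final step. You integrate from $t=0$ and split the error integral at $t_*$, producing the coefficient $e^{-\beta T}+Ct_*e^{LT}$ in front of $\|w(0)\|_{L^1(\Omega)}$; the paper instead applies Gronwall on $[t_*,T]$, bounds the datum at $t_*$ by $\|w(t_*)\|_{L^1(\Omega)}\le e^{Lt_*}\|w(0)\|_{L^1(\Omega)}$ via \eqref{eq:L1C}, and obtains the single multiplicative coefficient $e^{Lt_*-\beta(T-t_*)}$, after which $t_*$ is fixed so small that $e^{Lt_*-\beta(T-t_*)}<(1-\delta)/2$. This is precisely the step you flag as ``the main obstacle'' and leave unresolved, and you are right that it is the delicate point: as $t_*\to 0$ both versions of the coefficient tend to $e^{-\beta T}$, so a constant strictly below $1/2$ presupposes $\beta T>\ln 2$, while $T$ was only constrained from above in Lemma~\ref{sign}. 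The paper supplies no additional mechanism here (no iteration of the time-$T$ map, no enlargement of $T$); it simply fixes $t_*$ and thereby $\delta$ through the displayed inequality. So your proposal is the paper's argument with the closing choice of constants left implicit; to complete it you should state the Gronwall-from-$t_*$ form, the bound on $\|w(t_*)\|_{L^1(\Omega)}$ by the $L^1$-contraction, and the explicit definition of $\delta$ via $e^{Lt_*-\beta(T-t_*)}<(1-\delta)/2$, rather than leaving the contraction constant as an open organizational issue.
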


\begin{proof}
	We set $w=u_1-u_2$ and consider \eqref{eq:differenceprob} on the set $(0,T]\times S(4\theta)$. 
	Multiplying the equation by
	\begin{equation}
		\psi(x)\text{sgn}(w(t,x))=\psi(x)\text{sgn}(\psi(x)l_{1}(t,x)w(t,x)),\label{eq:3.30}
	\end{equation}
	where $\psi$ is the cut-off function defined in \eqref{eq:cutoffdef}, and integrating over $\Omega$ we get 
	\begin{align*}
		\left<w_{t},\psi \text{sgn}(w)\right>=\left<\Delta(l_{1}(t)w),\psi \text{sgn}(\psi l_{1}(t)w)\right>+\left<l_{2}(t)w,\psi sgn(w)\right>.
	\end{align*}
	Therefore, using the relation\begin{equation*}
		\psi\Delta(l_{1}(t)w)=\Delta(\psi l_{1}(t)w)-2\nabla\psi\cdot\nabla(l_{1}(t)w)- l_{1}(t)w\Delta\psi,
	\end{equation*}we have\begin{align*}
		\partial_{t}\left< \psi,|w| \right> &=\left<\Delta(\psi l_{1}(t)w),\text{sgn}(\psi l_{1}(t)w)\right>-2\left<\nabla\psi\cdot\nabla(l_{1}(t)w),\text{sgn}(\psi l_{1}(t)w)\right>\notag 
		\\&\quad -\left<l_{1}(t)w\Delta\psi,\text{sgn}(\psi l_{1}(t)w)\right>+\left<l_{2}(t)w,\psi \text{sgn}(w)\right>.
	\end{align*}
    Then, using the inequality $\langle\Delta v, \text{sgn}(v)\rangle\leq 0$ we obtain
    \begin{equation*}
		\partial_{t}\left<\psi,|w|\right>\leq\left<\Delta\psi,l_{1}(t)|w|\right>+\left<\psi,l_{2}(t)|w|\right>+2\left< l_{1}(t)w\nabla\psi,\nabla(\text{sgn}(w))\right>,
	\end{equation*}
    which can be justified by considering a smooth approximating of the $\text{sgn}$ function, performing integration by parts and passing to the limit.
    Therefore, using $\eqref{eq:ressigncond3}_{1}$, \eqref{eq:cutoffbdd} and \eqref{eq:l1es}, and the fact that $\Delta\psi(x)=|\nabla\psi(x)|=0$ for $x\in S(4\theta)$, we conclude that 
	\begin{align*}		&\partial_{t}\left<\psi,|w(t)|\right>+\beta\left<\psi,|w(t)|\right>\\
    \leq& \int\limits_{L(4\theta)}\Delta\psi(x)l_{1}(t,x)|w(t,x)|dx+\int\limits_{L(4\theta)}|\nabla\psi(x)|l_{1}(t,x)|w(t,x)|dx\leq C||w(t)||_{L^{1}(L(4\theta))}.
	\end{align*}
	Consequently, applying Gronwall's lemma and using that $\psi(x)=1$ for $x\in S(4\theta)$, it follows that
	\begin{equation*}
		||w(T)||_{L^{1}(S(4\theta))}\leq e^{-\beta(T-t_{*})}||w(t_{*})||_{L^{1}(L(4\theta))}+C\int_{t_{*}}^{T}e^{-\beta(T-s)}||w(s)||_{L^{1}(L(4\theta))}ds,
	\end{equation*}
	for an arbitrary $t_{*}\in(0,T)$. Therefore, together with \eqref{eq:L1C}, we obtain
	\begin{equation*}
		||u_{1}(T)-u_{2}(T)||_{L^{1}(S(4\theta))}\leq e^{Lt_{*}-\beta(T-t_{*})}||u_{1}(0)-u_{2}(0)||_{L^{1}(\Omega)}+C_{t_{*}}||u_{1}-u_{2}||_{L^{1}([t_{*},T]\times L(4\theta))},
	\end{equation*}
	for some constant $C_{t_{*}}$. By fixing $t_{*}$ sufficiently small such that $e^{Lt_{*}-\beta(T-t_{*})}< (1-\delta)/2<1$, we obtain \eqref{eq:contract}.
\end{proof}

Finally, we use Proposition \ref{EfendievZelik2008} and the results obtained in the Lemmas~\ref{smoothinglemma}-~\ref{contractionlemma}  to establish the existence of an exponential attractor. 

\begin{proof}[Proof of Theorem~\ref{EAS}:] 
	Let $u_0\in\mathcal{C}$, $T>0$ and $\varepsilon>0$ be as in Lemma~\ref{sign} and  $u_{1}, u_{2}$ be two solutions of \eqref{eq:SP} with initial data $u_{i}(0)\in B^{L^{1}(\Omega)}_{\varepsilon}(u_{0})\cap \mathcal{C}, i=1,2$. 
	We take $t_*\in(0,T)$ as in Lemma \ref{contractionlemma}. 
	By \eqref{eq:smoothing}, we have the estimate 
	\begin{align}
		\lVert u_{1}-u_{2}\rVert_{C^{\alpha}([t_{*},T]\times L(3\theta))}&\leq \kappa\lVert u_{1}(0)-u_{2}(0)\rVert_{L^{1}(\Omega)},\label{eq:smoothpart}
	\end{align}
	where $\kappa=C_{t_*}.$
	Moreover, taking into account the following facts 
	\begin{align*}
		\lVert w(T)\rVert_{L^{1}(\Omega)}&\leq \lVert w(T)\rVert_{L^{1}(S(4\theta))}+\lVert w(T)\rVert_{L^{1}(L(7\theta/2))},\\
		\lVert w(T)\rVert_{L^{1}(L(7\theta/2))}&
		\leq C\lVert w\rVert_{C([t_{*},T]\times L(7\theta/2))},
	\end{align*}
	and using \eqref{eq:contract}, we deduce the estimate\begin{align}
		\lVert u_{1}(T)-u_{2}(T)\Vert_{L^{1}(\Omega)}&
		\leq  \lVert u_{1}(T)-u_{2}(T)\Vert_{{L^{1}(S(4\theta))}}+ \lVert u_{1}(T)-u_{2}(T)\Vert_{L^{1}(L(7\theta/2))}\notag\\
		&\leq \frac{1-\delta}{2}\lVert u_{1}(0)-u_{2}(0)\rVert_{L^{1}(\Omega)}+C||u_{1}-u_{2}||_{L^{1}([t_{*},T]\times L(4\theta))}\notag\\
		&\quad+C\lVert u_{1}-u_{2}\rVert_{C([t_{*},T]\times L(7\theta/2))}\notag\\&\leq \frac{1-\delta}{2}\lVert u_{1}(0)-u_{2}(0)\rVert_{L^{1}(\Omega)}+ \mu\lVert u_{1}-u_{2}\rVert_{C([t_{*},T]\times L(7\theta/2))},\label{eq:contpart}
	\end{align}
	for some constant $\mu>0$. We note that $T > 0, \delta\in(0,1), \kappa>0$ and $\mu>0$ are independent of $\varepsilon\leq \varepsilon_{0}$, $u_{0}$ and $u_{i}, i=1,2$, and that the embedding\begin{equation*}
		C^{\alpha}([t_{*},T]\times \overline{L(3\theta)})\hookrightarrow C([t_{*},T]\times \overline{L(7\theta/2)})
	\end{equation*} 
	is compact and uniform in the sense of the Kolmogorov $\varepsilon$-entropy. 
	Hence, we can apply Proposition  \ref{EfendievZelik2008} to the discrete time semigroup 
	$S(n)=\mathcal{S}(Tn), n\in\mathbb{N}_0$, with $E=L^{1}(\Omega),$ $K=\mathcal{C}, u_{*}=u_{0},$ $X_{u_{*},\varepsilon}=C^{\alpha}([t_{*},T]\times \overline{L(3\theta)})$ and $Y_{u_{*},\varepsilon}=C([t_{*},T]\times \overline{L(7\theta/2)})$. Moreover, the mapping $\mathcal{T}_{u_*,\varepsilon}$ is defined by 
	$\mathcal{T}_{u_{*},\varepsilon}(v_{0})=v|_{[t_{*},T]\times  \overline{L(3\theta)}}$, i.e. the restriction of the solution $v$ with initial data $v_{0} \in B^{L^{1}(\Omega)}_{\varepsilon}(u_{0})\cap \mathcal{C}$ to the set $[t_{*},T]\times  \overline{L(3\theta)}.$
	Assumption \eqref{eq:HC} and the estimates \eqref{eq:smoothpart} and \eqref{eq:contpart} show that the 
	hypotheses of Proposition  \ref{EfendievZelik2008} hold with the constants $\mu, \kappa$ and $\eta=\frac{1-\delta}{2}<\frac{1}{2}$. Hence, for any $\nu \in (0, \frac{\delta}{2})$ the discrete time semigroup $\{S(n)\}_{n\in\mathbb{N}_0}$ possesses an exponential attractor $\mathcal{M}_\nu^d\subset \mathcal{C}$ and its fractal dimension is bounded by 
	$$ 
	\text{dim}_{f}^{L^{1}(\Omega)}(\mathcal{M}_\nu^d)\leq \log_{1/(1-\delta+2\nu)}\widetilde N_{\nu/\kappa\mu},
	$$
	where 
	$$
	\widetilde N_{\nu/\kappa\mu}=N_{\nu/\kappa\mu}^{Y_{u_{*},\varepsilon}}\left(B^{X_{u_{*},\varepsilon}}_1(0)\right)
	=N_{\nu/\kappa\mu}^{C([t_{*},T]\times \overline{L(7\theta/2)})}\left(B^{C^{\alpha}([t_{*},T]\times \overline{L(3\theta)})}_1(0)\right).
	$$
	As seen in the proof of \cite[Theorem 3.1]{EfendievZelik2008} and \cite[Theorem 2.1]{CzajaEfendiev}, the discrete exponential attractor is the closure in $L^1(\Omega)$ of a countable set in $\mathcal{C}$, $\mathcal{M}_\nu^d=\overline{\widetilde{ \mathcal{M}}_\nu^d}$, where $\widetilde{ \mathcal{M}}_\nu^d=\bigcup_{n\in\mathbb{N}_0}V^n$ with $V^n\in\mathcal{C}.$
	
	Finally, to obtain an exponential attractor for the continuous time semigroup
	we follow the construction in \cite{SonnerPhD}, see also \cite{CzajaSonner}. We set $\mathcal{M}_\nu=\overline{\widetilde{ \mathcal{M}}_\nu}$, where the closure is taken in $L^1(\Omega)$ and 
	$$
	\widetilde{ \mathcal{M}}_\nu=\bigcup_{t\in[0,T]}\mathcal{S}(t)\widetilde{ \mathcal{M}}_\nu^d.
	$$
	Due to the H\"older continuity \eqref{eq:HC}, the fractal dimension of the continuous time exponential attractor is bounded by 
	$$
	\text{dim}_{f}^{L^{1}(\Omega)}(\mathcal{M}_\nu)\leq \frac{1}{\alpha}+ \text{dim}_{f}^{L^{1}(\Omega)}(\mathcal{M}_\nu^d),
	$$
	see \cite[Theorem 3.4]{SonnerPhD}. This provides the estimate for the fractal dimension of the exponential attractor in Theorem \ref{EAS} with 
	$J=[t_*,T],$ $\Omega_1=L(3\theta), \Omega_2=L(7\theta/2)$, $\eta_1=1-\delta+2\nu$ and $\eta_2=\frac{\nu}{2\kappa},$
	which completes the proof.
\end{proof}

The proof of Theorem~\ref{EAS} can be extended for the coupled problem \eqref{eq:CP}. We only provide a sketch of the proof as the main difficulty lies in the degenerate equation.

\begin{proof}[Proof of Theorem~\ref{EAC}:] 
	We indicate the main ideas of the proof. The theorem can be shown by repeating and adjusting the arguments in the proof of Theorem~\ref{EAS} for the coupled system. 
	As for the scalar problem we introduce a set 
	\begin{align*}
		\mathcal{C}:=\overline{\bigcup_{t\geq 1}\mathcal{T}(t)\mathcal{Y}},
	\end{align*}
	where the closure is taken in $C^{\alpha}(\Bar{\Omega})\times C^{\alpha}(\Bar{\Omega})$.
	
	Let $w_{0}=(u_{0},v_{0})\in \mathcal{C}, u_{0}\neq 0$ and $L(\theta)=L(\theta,u_{0}), S(\theta)=S(\theta,u_{0})$ denote the sets defined in \eqref{eq:LS}. Due to \eqref{eq:signcondf2}, we can fix $\theta,\beta>0$ such that
	\begin{align*}
		f_{u}(\cdot,u,v)<-3\beta<0\quad \text{in}\ \Omega\qquad \forall |u|<5\theta,\  v\in [0,1].
	\end{align*} 
	By \eqref{eq:HC2}, the solution $w=(u,v)$ belongs to $C^{\alpha}$ with respect to $t$ and $x$. Then, in the same manner as in Lemma~\ref{sign}, we conclude that for an arbitrary solution $\Tilde{w}:=(\Tilde{u},\Tilde{v})$ of the problem \eqref{eq:CP} with initial data $\Tilde{w}_{0}=(\Tilde{u}_{0},\Tilde{v}_{0})\in B^{L^{1}(\Omega)\times L^{1}(\Omega)}_{\varepsilon}(w_{0})\cap \mathcal{C}$, the following estimates hold, uniformly with respect to $\Tilde{w}_{0}$,
	\begin{align}
		\begin{split}
			f_{u}(x,\Tilde{u}(t,x),\Tilde{v}(t,x))&<-\beta\qquad \text{for}\ x\in S(5\theta), t\in [0,T],\\
			|\Tilde{u}(t,x)|&>\theta/4\qquad \text{for}\ x\in L(\theta),\ t\in [0,T].
		\end{split}
		\label{eq:ressigncond4}
	\end{align}
	
	Now, to obtain the smoothing property for the difference of two solutions of \eqref{eq:CP}, let $w_{1}=(u_{1},v_{1})$ and $w_{2}=(u_{2},v_{2})$ be two solutions of the coupled problem \eqref{eq:CP} with initial data $w_{i}(0)\in B^{L^{1}(\Omega)\times L^{1}(\Omega)}_{\varepsilon}(w_{0})\cap \mathcal{C}$, $i=1,2$. Define $\xi=(\xi_{1},\xi_{2}):=w_{1}-w_{2}$, which solves the system
	\begin{align}
		\begin{split}
			\partial_{t}\xi_{1}&=\Delta(l_{1}(t)\xi_{1})+l_{2}(t)\xi_{1}+f(\cdot,u_{2},v_{1})-f(\cdot,u_{2},v_{2}),\\
			\partial_{t}\xi_{2}&=\Delta \xi_{2}+g(\cdot,u_{1},v_{1})-g(\cdot,u_{2},v_{2}),\\
			\xi|_{t=0}&=w_{1}(0)-w_{2}(0) 
		\end{split}
		\label{eq:differenceprob2}
	\end{align}
	where
	\begin{align}
		\begin{split}
			l_{1}(t)&:=\int_{0}^{1}\phi'(su_{1}(t)+(1-s)u_{2}(t))ds,\\
			l_{2}(t)&:=\int_{0}^{1}f_{u}(\cdot, su_{1}(t)+(1-s)u_{2}(t),v_{1})ds.
		\end{split}
		\label{eq:l1-l2def2}
	\end{align} 
	By following the arguments in Lemma~\ref{smoothinglemma} and using the boundedness of the functions $f$ and $g$, we conclude that for every $t_{0}\in(0,T)$, we have
	\begin{equation}
		\lVert \xi_{1}\rVert_{C^{\alpha}([t_{0},T]\times \overline{L(3\theta)})}\leq C_{t_{0}}\lVert \xi_{1}(0)\rVert_{L^{1}(\Omega)},\label{eq:smoothingu}
	\end{equation}
	for some constant $C_{t_0}>0.$
	Moreover, classical interior regularity estimates for semilinear uniformly parabolic equations (see \cite{Ladyzhenskaya}) imply that
	\begin{equation}
		\lVert \xi_{2}\rVert_{C^{\alpha}([t_{0},T]\times \overline{\Omega})}
		\leq C_{t_{0}}\lVert \xi_{2}(0)\rVert_{L^{1}(\Omega)}.\label{eq:smoothingv}
	\end{equation}
	Hence, we obtain, by using \eqref{eq:smoothingu}-\eqref{eq:smoothingv}, for arbitrary $t_0\in(0,T)$
	\begin{equation}
		\lVert \xi\rVert_{C^{\alpha}([t_{0},T]\times \overline{L(3\theta)})\times C^{\alpha}([t_{0},T]\times \overline{\Omega})}\leq C_{t_{0}}\lVert \xi(0)\rVert_{L^{1}(\Omega)\times L^{1}(\Omega)}.\label{eq:smoothing2}
	\end{equation}
	where the constant $C_{t_{0}}>0$ is independent of  $\varepsilon\leq \varepsilon_0, w_{0}$ and the trajectories $w_{1}$ and $w_{2}$.
	
	In order to establish the contraction property for the difference of two solutions of \eqref{eq:differenceprob2}, we multiply the first equation by\begin{equation*}
		\psi(x)\text{sgn}(\xi_{1}(t,x))=\psi(x)\text{sgn}(\psi(x)l_{1}(t,x)\xi_{1}(t,x)),
	\end{equation*}
	where $\psi$ is the cut-off function defined in \eqref{eq:cutoffdef}, and integrate over $\Omega$. Then, by $\eqref{eq:ressigncond4}_{1}$, \eqref{eq:FG1} and \eqref{eq:cutoffbdd}, we obtain
	\begin{align*}    
		\partial_{t}\left<\psi,|\xi_{1}(t)|\right>+\beta\left<\psi,|\xi_{1}(t)|\right>
		&\leq \int\limits_{L(4\theta)}\Delta\psi(x)l_{1}(t,x)|\xi_{1}(t,x)|dx\\
		&\quad+\int\limits_{\Omega}|f(x,u_{2}(t,x),v_{1}(t,x))-f(x,u_{2}(t,x),v_{2}(t,x))||\psi(x)|dx\\&\leq C(||\xi_{1}(t)||_{L^{1}(L(4\theta))}+||\xi_{2}(t)||_{L^{1}(\Omega)}),
	\end{align*}
	which, by Gronwall's lemma and \eqref{eq:L1C}, yields
	\begin{align}
		\lVert \xi_{1}(T)\rVert_{L^{1}(S(4\theta))}\leq \frac{1-\delta}{2}\lVert\xi_{1}(0)\rVert_{L^{1}(\Omega)}+C_{t_*}\left(\lVert\xi_{1}\rVert_{L^{1}((t_{*},T)\times L(4\theta))}+\lVert\xi_{2}\rVert_{L^{1}((t_{*},T)\times \Omega)}\right), \label{eq:contractu}
	\end{align}
	for a sufficiently small $t_{*}\in(0,T)$ and $\delta<1$. Therefore, together with the  $L^{1}$-contraction estimate for solutions of the semlinear equation
	\begin{align*}
		\lVert \xi_{2}(T)\rVert_{L^{1}(\Omega)}\leq e^{LT}\lVert \xi_{2}(0)\rVert_{L^{1}(\Omega)},
	\end{align*}
	we get the required contraction property, 
	\begin{align}
		\lVert \xi(T)\rVert_{L^{1}(S(4\theta))\times L^{1}(\Omega)}&\leq \frac{1-\delta}{2}\lVert\xi(0)\rVert_{L^{1}(\Omega)\times L^{1}(\Omega)}+C_{t_{*}}\lVert\xi\rVert_{L^{1}((t_{*},T)\times L(4\theta))\times L^{1}((t_{*},T)\times \Omega)}, \label{eq:contract2}
	\end{align}
	for some constant $C_{t_{*}}>0$ and sufficiently small $t_{*}\in(0,T)$.
	
	Finally, using that
	\begin{align*}
		\lVert \xi(T)\rVert_{L^{1}(\Omega)\times L^{1}(\Omega)}&\leq \lVert \xi(T)\rVert_{L^{1}(S(4\theta))\times L^{1}(\Omega)}+\lVert \xi(T)\rVert_{L^{1}(L(7\theta/2))\times L^{1}(\Omega)},
		\\
		\lVert \xi(T)\rVert_{L^{1}(L(7\theta/2))\times L^{1}(\Omega)}&\leq C\lVert \xi\rVert_{C([t_{*},T]\times\overline{L(7\theta/2)})\times C([t_{*},T]\times \Bar{\Omega})},
	\end{align*}
	together with the estimates \eqref{eq:smoothing2} for $t_0=t_*$ and \eqref{eq:contract2}, we conclude that
	\begin{align*}
		\lVert w_{1}-w_{2}\rVert_{C^{\alpha}([t_{*},T]\times \overline{L(3\theta)})\times C^{\alpha}([t_{*},T]\times \overline{\Omega})}&\leq \kappa\lVert w_{1}(0)-w_{2}(0)\rVert_{L^{1}(\Omega)\times L^{1}(\Omega)},\\
		\lVert w_{1}(T)-w_{2}(T)\Vert_{L^{1}(\Omega)\times L^{1}(\Omega)}&\leq \frac{1-\delta}{2}\lVert w_{1}(0)-w_{2}(0)\rVert_{L^{1}(\Omega)\times L^{1}(\Omega)}\\&\quad+\mu\lVert w_{1}-w_{2}\rVert_{C([t_{*},T]\times \overline{L(7\theta/2)})\times C([t_{*},T]\times \Bar{\Omega})}.\notag
	\end{align*}
	for all solutions $w_1,w_2$ with initial data $w_{i}(0)\in B^{L^{1}(\Omega)\times L^{1}(\Omega)}_{\varepsilon}(w_{0})\cap \mathcal{C}$, $i=1,2,$ where  $0<\delta<1$, $T>0$, $\kappa=C_{t_{*}}$ and $\mu$ are independent of $\varepsilon\leq \varepsilon_{0}$, $w_{0}$ and $w_1,w_2$. 
	Hence, we can apply Proposition  \ref{EfendievZelik2008} to the discrete time semigroup 
	$\Pi(n)=\mathcal{T}(Tn), n\in\mathbb{N}_{0}$, with $E=L^{1}(\Omega)\times L^{1}(\Omega),$ $K=\mathcal{C}, u_{*}=w_{0},$ $X_{u_{*},\varepsilon}=C^{\alpha}([t_{*},T]\times \overline{L(3\theta)})\times C^{\alpha}([t_{*},T]\times \Bar{\Omega}))$ and $Y_{u_{*},\varepsilon}=C([t_{*},T]\times\overline{L(7\theta/2)})\times C([t_{*},T]\times \Bar{\Omega})$. Moreover, the mapping $\mathcal{T}_{u_{*},\varepsilon}$ is defined by 
	$\mathcal{T}_{u_{*},\varepsilon}(\tilde{w}_{0})=(\tilde{u}|_{[t_{*},T]\times  \overline{L(3\theta)}}, \tilde{v}|_{[t_{*},T]\times  \overline{\Omega}})$, i.e. the restriction of the solution $\Bar{w}=(\Bar{u},\Bar{v})$ with initial data  
	$\Bar{w}_{0}\in B^{L^{1}(\Omega)\times L^{1}(\Omega)}_{\varepsilon}(w_{0})\cap \mathcal{C}$
	to the sets $[t_{*},T]\times  \overline{L(3\theta)}$ and $[t_{*},T]\times  \Omega$, respectively.
	Assumption \eqref{eq:HC2} and the estimates \eqref{eq:smoothpart} and \eqref{eq:contpart} show that the 
	hypotheses of Proposition  \ref{EfendievZelik2008} hold with $\mu, \kappa$ and $\eta=\frac{1-\delta}{2}<\frac{1}{2}$. Hence, for any $\nu \in (0, \frac{\delta}{2})$ the discrete time semigroup $\{\Pi(n)\}_{n\in\mathbb{N}_0}$ possesses an exponential attractor $\mathcal{M}_\nu^d\subset \mathcal{C}$ and its fractal dimension is bounded by  
	$$
	\text{dim}_{f}^{L^{1}(\Omega)\times L^{1}(\Omega)}(\mathcal{M}_\nu^d)\leq \log_{1/(1-\delta+2\nu)}\widetilde N_{\nu/\kappa\mu},\quad \text{where}\ \widetilde N_{\nu/\kappa\mu}=\ N_{\nu/\kappa\mu}^{Y_{u_{*},\varepsilon}}(B^{X_{u_{*},\varepsilon}}_{1}(0)).
	$$
	
	The exponential attractor for the continuous time semigroup is obtained as in the proof of Theorem \ref{EAS}
	which, together with \eqref{eq:HC2}, yields the estimate for the fractal dimension of the exponential attractor in Theorem \ref{EAC} with $J=[t_{*},T], \Omega_{1}=L(3\theta), \Omega_{2}=L(7\theta/2)$, $\chi_{1}=1-\delta+2\nu$ and $\chi_{2}=\frac{\nu}{\kappa\mu}$.
\end{proof}

\section*{Acknowledgement}
 
The first author would like to express sincere gratitude to the Scientific and Technological Research Council of Türkiye (TÜBİTAK) for their generous support and funding for this research under 2219 - International Postdoctoral Research Fellowship Program. Their contribution has been invaluable in enabling the completion of this study.

The authors would like to thank Rados\l aw Czaja for his valuable comments and clarification of inequality \eqref{eq:interpol} and the referee for their careful reading of the manuscript and valuable remarks.

\end{document}